\documentclass[11pt]{article}
\usepackage{amsmath,amssymb,amsthm}
\usepackage[usenames,dvipsnames]{color}
\usepackage{enumerate}
\usepackage{geometry}
\usepackage{graphicx}
\usepackage{subfigure}
\usepackage{epsfig}
\usepackage{graphicx}
\usepackage{colordvi}
\usepackage{graphics}
\usepackage{color}
\usepackage{float}

\numberwithin{equation}{section}
\newtheorem{theorem}[equation]{Theorem}

\newtheorem{lemma}[equation]{Lemma}
\theoremstyle{definition}
\newtheorem{remark}[equation]{Remark}
\newtheorem{algorithm}{Algorithm}[section]

\usepackage{mathtools}

\usepackage{fullpage} 

\newcommand{\dt}{{\Delta t}}

\newcommand{\chih}{\chi_h}

\newcommand{\phiv}{\phi_{h,v}}
\newcommand{\phiw}{\phi_{h,w}}
\newcommand{\etav}{\eta_{v}}
\newcommand{\etaw}{\eta_{w}}
\newcommand{\muo}{\mu}

\newcommand{\R}{\mathbb{R}}

\DeclareMathOperator{\rot}{rot} 

\newcommand{\ev}{e_v}
\newcommand{\ew}{e_w}


\usepackage[notref,notcite]{showkeys}


\usepackage[color=white,linecolor=black]{todonotes} 

\usepackage[square,comma,numbers,sort]{natbib}

\title{Continuous data assimilation applied to a velocity-vorticity formulation of the 2D Navier-Stokes equations}

\author{
Matthew Gardner\thanks{Department of Mathematical Sciences, Clemson University, Clemson, SC, 29634;
	email: mdgardn@g.clemson.edu}
\and
Adam Larios \thanks{Department of Mathematics, University of Nebraska-Lincoln, Lincoln, NE, 68588;
email: alarios@unl.edu, partially supported by NSF Grants DMS 1716801 and CMMI 1953346.}
\and
Leo G. Rebholz\thanks{Department of Mathematical Sciences, Clemson University, Clemson, SC, 29634;
email: rebholz@clemson.edu, partially supported by NSF Grant DMS 2011490.}
\and
Duygu Vargun
\thanks{Department of Mathematical Sciences, Clemson University, Clemson, SC, 29634;
	email: dvargun@clemson.edu.}
\and
Camille Zerfas
\thanks{Department of Mathematical Sciences, Clemson University, Clemson, SC, 29634;
email: czerfas@g.clemson.edu.}
}

\begin{document}
\date{}
\maketitle

\begin{abstract}
We study a continuous data assimilation (CDA) algorithm for a velocity-vorticity formulation of the 2D Navier-Stokes equations in two cases: nudging applied to the velocity and vorticity, and nudging applied to the velocity only.  We prove that under a typical finite element spatial discretization and backward Euler temporal
discretization, application of CDA preserves the unconditional long-time stability property of the velocity-vorticity method and provides
optimal long-time accuracy.  These properties hold if nudging is applied only to the velocity, and if nudging is also applied to the vorticity
then the optimal long-time accuracy is achieved more rapidly in time.  Numerical tests illustrate the theory, 
and show its effectiveness on an application problem of 
channel flow past a flat plate.
\end{abstract}

\section{Introduction}
Performing accurate simulations of {\color{black} complex} fluid flows that match real-world observations or experiments typically requires highly precise knowledge of the initial data.  However, such data is often known in very sparsely-distributed locations, which is the case in, e.g., weather observation,  ocean monitoring, etc. Thus, accurate, deterministic simulations based on initial data are often impractical.  \textit{Data assimilation} is a collection of methods that works around this difficulty by incorporating incoming data into the simulation to increase accuracy, hence data assimilation techniques are highly desirable to incorporate into simulations.  However, the underlying physical equations often suffer from stability issues which can reduce the accuracy gained by using data assimilation.  While there are many ways to stabilize numerical simulations, it is far from obvious how to adapt data assimilation techniques to combine them with cutting-edge stabilization methods.  Therefore it becomes worthwhile to seek new ways to incorporate data assimilation into stabilized schemes.  In this article, we propose and analyze a new approach to this problem which combines \textit{continuous data assimilation} with \textit{velocity-vorticity stabilization}.

Since Kalman's seminal paper \cite{Kalman_1960_JBE} in 1960, a wide variety of data assimilation algorithms have arisen (see, e.g., \cite{Daley_1993_atmospheric_book,Kalnay_2003_DA_book,Law_Stuart_Zygalakis_2015_book,Lewis_Lakshmivarahan_2008}).  In \cite{Azouani2014ContinuousDA}, Azouani, Olson, and Titi proposed a new algorithm known as continuous data assimilation (CDA), also referred to as the AOT algorithm.  Their approach revived the so-called ``nudging'' methods of the 1970's (see, e.g., \cite{Anthes_1974_JAS,Hoke_Anthes_1976_MWR}), but with the addition of a spatial interpolation operator.  This seemingly minor change had profound impacts, and the authors of \cite{Azouani2014ContinuousDA} were able to prove that using only sparse observations, the CDA algorithm applied to the 2D Navier-Stokes equations converges to the correct solution exponentially fast in time, independent of the choice initial data.  This stimulated a large amount of recent research on the CDA algorithm; see, e.g., 
\cite{Albanez_Nussenzveig_Lopes_Titi_2016,
Altaf_Titi_Knio_Zhao_Mc_Cabe_Hoteit_2015,
Bessaih_Olson_Titi_2015,
Biswas_Foias_Monaini_Titi_2018downscaling,
Biswas_Martinez_2017,
Carlson_Hudson_Larios_2018,
Celik_Olson_Titi_2019,
DiLeoni_Clark_Mazzino_Biferale_2018_unraveling,
DiLeoni_Clark_Mazzino_Biferale_2019,
Farhat_GlattHoltz_Martinez_McQuarrie_Whitehead_2019,
Farhat_Johnston_Jolly_Titi_2018,
Farhat_Jolly_Titi_2015,
Farhat_Lunasin_Titi_2016abridged,
Farhat_Lunasin_Titi_2016benard,
Farhat_Lunasin_Titi_2016_Charney,
Farhat_Lunasin_Titi_2017_Horizontal,
Farhat_Lunasin_Titi_2018_Leray_AOT,
Foias_Mondaini_Titi_2016,
Foyash_Dzholli_Kravchenko_Titi_2014,
GarciaArchilla_Novo_Titi_2018,
Gesho_Olson_Titi_2015,
GlattHoltz_Kukavica_Vicol_2014,
Ibdah_Mondaini_Titi_2018uniform,
Jolly_Martinez_Olson_Titi_2018_blurred_SQG,
Jolly_Martinez_Titi_2017,
Larios_Pei_2017_KSE_DA_NL,
Larios_Victor_2019,
LRZ19,
Lunasin_Titi_2015,
Markowich_Titi_Trabelsi_2016,
Mondaini_Titi_2018_SIAM_NA,
Pei_2019,
RZ19}.  {\color{black} The recent paper \cite{DHKTLD19} showed that CDA can be effectively used for weather prediction, showing that it can indeed be a powerful tool on practical large scale problems.  Convergence of discretizations of CDA models was studied in \cite{LRZ19,RZ19,IMT18,GNT18} , and found results similar to those at the continuous level.  } Our interest in the CDA algorithm arises from its adaptability to a wide range of nonlinear problems, as well as its small computational cost and straight-forward implementation.  These qualities make it an ideal candidate for combining data assimilation with stabilization techniques; in particular, with the {\color{black} recently developed} velocity-vorticity stabilization, described below.

Flows of incompressible, viscous Newtonian fluids are modeled by the Navier-Stokes equations (NSE), which take the form
\begin{equation}\label{NSE}
\begin{aligned}
u_t - \nu\Delta u + (u\cdot \nabla) u + \nabla p =& f,\\
\nabla\cdot u =& 0,
\end{aligned}
\end{equation}
together with suitable boundary and initial conditions.  Here, $u$ denotes a velocity vector field, $p$ is pressure, $f$ is external (given) force, and $\nu>0$ represents the kinematic viscosity which is inversely proportional to the Reynolds number.  Solving the NSE is important in many applications, however it is well known that doing so can be quite difficult, especially for small $\nu$.  Many different tools
have been used for more accurate numerical simulations of the NSE, for example using NSE formulations tailored to particular application problems \cite{GS98,CHOR17,OR10,LMNOR09} or discretization and stabilization methods \cite{Laytonbook,OR04,J16}, and more recently using observed data to improve simulation \cite{Azouani2014ContinuousDA,LRZ19,czphdthesis,ZRSI19,Biswas_Hudson_Larios_Pei_2017}. 

We consider in this paper discretizations of a continuous data assimilation (CDA) enhancement applied to the following velocity-vorticity (VV) formulation of the 2D NSE:
\begin{equation}\label{VV}
\begin{aligned}
u_t - \nu \Delta u + \omega \times u + \nabla P = f,\\
\nabla\cdot u=0,\\
\omega_t- \nu \Delta \omega + (u\cdot\nabla)\omega=\rot f.
\end{aligned}
\end{equation}
Here, $\omega$ represents the (scalar) vorticity, $P:=p+\tfrac12|u|^2$ is the Bernoulli pressure, and rot is the 2D curl operation: $\rot \binom{f_1}{f_2}:=\tfrac{\partial f_1}{\partial y} - \tfrac{\partial f_2}{\partial x}$.  In the NSE, the velocity and vorticity are coupled via the relationship $\omega=\rot u$ (or equivalently, the Biot-Savart Law).  However, the VV formulation typically does \textit{not} enforce this relationship, so $u$ and $\omega$ are only coupled via the evolution equations in \eqref{VV}, and the relationship $\omega=\rot u$ is  recovered \textit{a posteriori}, so that at the continuous level, \eqref{VV} is formally equivalent to \eqref{NSE}.  However, in practice, discretizations of VV can behave quite differently from {\color{black} typical} discretizations of NSE, providing better stability as well as accuracy (especially for vorticity) for vortex dominated or strongly rotating flows, see \cite{Olshanskii2015NaturalVB,Olshanskii2010VelocityvorticityhelicityFA,articleLEE,articleAkbas} and references therein. A very interesting property of \eqref{VV} was recently shown in \cite{HOR17}, where it was proven that the system \eqref{VV} when discretized with standard finite elements and a decoupling backward Euler or BDF2 temporal discretization was unconditionally long-time stable in both $L^2$ and $H^1$ norms for both velocity and vorticity; no such analogous result is known for velocity-pressure discretizations/schemes.  {\color{black} Hence the scheme itself is stabilizing, even though it is still formally consistent with the NSE.}  The recent work in \cite{articleAkbas} showed that these unconditionally long-time stable schemes also provide optimal vorticity accuracy, yielding a vorticity solution that is one full order of spatial accuracy better than for an analogous velocity-pressure scheme.

{\color{black}
 We consider herein CDA applied to \eqref{VV}, which yields a model of the form
\begin{equation}\label{VVda}
\begin{aligned}
v_t - \nu \Delta v + w \times v + \nabla q + \mu_1 I_H (v - u)= f,\\
\nabla\cdot v=0,\\
w_t- \nu \Delta w + (v\cdot\nabla)w + \mu_2 I_H(w -\omega) =\rot f,
\end{aligned}
\end{equation}
where $I_H$ is an appropriate interpolation operator, $I_H(u)$ and $I_H(\omega)$ are assumed known from measurements, and
$\mu_1,\mu_2\ge 0$ are nudging parameters.  If $\mu_2=0$, then vorticity is not nudged and $I_H(\omega)$ need not be assumed known.
Due to the success of \eqref{VV} in recent papers \cite{HOR17,articleAkbas,Olshanskii2015NaturalVB} and that of CDA in the works mentioned above, combining these ideas 
and studying \eqref{VV} is a natural next step to see whether CDA will provide
optimal long-time accuracy for the VV schemes already known to be unconditionally long-time stable.  Herein, we do find
that CDA provides convergence of \eqref{VVda}, with any initial condition, to the true NSE solution (up to optimal discretization error) and moreover that CDA 
preserves the long-time stability.
}

This paper is organized as follows. In Section \ref{prelim}, we introduce the necessary notation and preliminaries needed in the analysis. In Section 3, we propose and analyze a fully discrete scheme for \eqref{VVda}, and show that for nudging velocity and vorticity together and nudging just velocity, algorithms are long-time stable in $L^2$ and $H^1$ norms and long-time optimally accurate in $L^2$ velocity and vorticity (under the usual CDA assumptions on the coarse mesh and nudging parameter).
In Section 4, we illustrate the theory with numerical tests, and finally draw conclusions in section 5.

\section{Notation and Preliminaries}\label{prelim}

We now provide notation and mathematical preliminaries to allow for a smooth analysis to follow.  {\color{black} We consider the domain $\Omega \subset \R^2$ to be the $2\pi$-periodic box}, with the $L^2(\Omega)$ norm and inner product denoted by $\| \cdot \|$ and $(\cdot, \cdot)$ respectively, while all other norms will be appropriately labeled. 


{\color{black}
For simplicity, we use herein periodic boundary conditions for velocity and vorticity.  Extension to full nonhomogeneous Dirichlet conditions can
be performed by following analysis in \cite{articleLEE}, although for no-slip velocity together with the more physically consistent  natural vorticity boundary condition
studied in \cite{Olshanskii2015NaturalVB,ORS18} more work would be needed to handle the boundary integrals.  We denote the natural corresponding function spaces for velocity, pressure, and vorticity by
\begin{align*}
X & := H^1_{\#}(\Omega)^2 = \left\{ v\in H^1_{loc}(\mathbb{R})^2,\ v \mbox{ is $2\pi$-periodic in each direction},\ \int_{\Omega} v\ dx=0 \right\},  \\
Q & := L^2_{\#}(\Omega) = \left\{ q\in L^2_{loc}(\mathbb{R}),\ q \mbox{ is $2\pi$-periodic in each direction},\ \int_{\Omega} q\ dx=0 \right\},  \\
W &:= H^1_{\#}(\Omega)  = \left\{ v\in H^1_{loc}(\mathbb{R}),\ v \mbox{ is $2\pi$-periodic in each direction},\ \int_{\Omega} v\ dx=0 \right\}.
\end{align*}
}

In $X$ (and $W$), we have the Poincar\'e inequality: there exists a constant $C_P$ depending only on $\Omega$ such that for any $\phi\in X$ (or $W$),
\[
\| \phi \| \le C_P \| \nabla \phi \|.
\]

We define the skew-symmetric trilinear operator $b^*:X\times W \times W \rightarrow \mathbb{R}$ to use for the nonlinear term in the vorticity equation, by
\begin{align*}
b^*(u,\omega,\chi):=\frac{1}{2}\left((u\cdot\nabla \omega, \chi) - (u\cdot\nabla \chi, \omega)\right).
\end{align*}

The following lemma is proven in \cite{LRZ19}, and is useful in our analysis.
\begin{lemma}\label{geoseries}
	Suppose constants $r$ and $B$ satisfy $r>1$, $B\ge 0$.  Then if the sequence of real numbers $\{a_n\}$ satisfies 
	\[
	ra_{n+1} \le a_n + B,
	\]
	we have that
	\[
	a_{n+1} \le a_0\left(\frac{1}{r}\right)^{n+1}  + \frac{B}{r-1}.
	\]
\end{lemma}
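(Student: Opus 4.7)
The plan is to prove the lemma by a straightforward induction on $n$, iterating the recurrence and recognizing the resulting geometric series. First I would divide both sides of the hypothesis by $r>0$ to obtain the standard form
\[
a_{n+1} \le \frac{1}{r}\,a_n + \frac{B}{r}.
\]
Applying this bound recursively to the $a_n$ term on the right, then to $a_{n-1}$, and so on down to $a_0$, I expect to get
\[
a_{n+1} \le \left(\frac{1}{r}\right)^{\!n+1} a_0 \;+\; \frac{B}{r}\sum_{k=0}^{n}\left(\frac{1}{r}\right)^{\!k}.
\]
This can be made rigorous by a one-line induction: the base case $n=0$ is immediate, and the inductive step simply substitutes the induction hypothesis for $a_n$ into $a_{n+1} \le a_n/r + B/r$.

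Next I would bound the finite geometric sum by the corresponding infinite series, which is legitimate because $B\ge 0$ and $1/r \in (0,1)$:
\[
\frac{B}{r}\sum_{k=0}^{n}\left(\frac{1}{r}\right)^{\!k} \;\le\; \frac{B}{r}\sum_{k=0}^{\infty}\left(\frac{1}{r}\right)^{\!k} \;=\; \frac{B}{r}\cdot\frac{1}{1-1/r} \;=\; \frac{B}{r-1}.
\]
Combining this with the iterated bound yields exactly
\[
a_{n+1} \le a_0\left(\frac{1}{r}\right)^{\!n+1} + \frac{B}{r-1},
\]
which is the desired conclusion.

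There is really no serious obstacle here — the argument is purely algebraic and the hypotheses $r>1$ and $B\ge 0$ are used precisely to guarantee convergence of the geometric series and to justify dropping the remaining positive tail. The only minor care needed is with signs: the lemma allows $a_n$ to be any real number (not necessarily nonnegative), so one should verify that each step of the induction preserves the correct direction of the inequality, which it does because we only multiply by the positive constant $1/r$ and add the nonnegative term $B/r$. This is why the geometric-series estimate can be applied termwise without changing the inequality.
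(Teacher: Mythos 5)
Your argument is correct: dividing by $r$, iterating the recurrence, and bounding the finite geometric sum by $\frac{B}{r-1}$ (valid since $B\ge 0$ and $1/r\in(0,1)$) is exactly the standard proof of this discrete Gronwall-type estimate. The paper itself does not reproduce a proof but defers to the cited reference \cite{LRZ19}, where the argument is the same iteration-plus-geometric-series computation you give, so there is nothing to flag.
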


\subsection{Discretization preliminaries}

Denote by $\tau_h$ a regular, conforming triangulation of the domain $\Omega$, and let $X_h \subset X$, $Q_h \subset Q$ be velocity-pressure spaces that satisfy the inf-sup condition. We will assume the use of  $X_h = X \cap P_k(\tau_h)$ and $Q_h = Q \cap P_{k-1}(\tau_h)$ Taylor-Hood or Scott-Vogelius elements (on appropriate meshes and/or polynomial degrees, see \cite{GS19} and references therein).  The discrete vorticity space is defined as $W_h := W \cap P_k(\tau_h).$ Define the discretely divergence free subspace by 
\[ 
V_h := \{ v_h \in X_h \,\, |\,\, (\nabla \cdot v_h, q_h) = 0 \,\, \forall \,\, q_h \in Q_h   \} . 
\]

We will assume the mesh is sufficiently regular so that the inverse inequality holds in $X_h$: There exists a constant $C$ such that
\begin{align*}
	\|\nabla \chi_h\| &\leq C h^{-1}\|\chi_h\| \quad \forall \,\, \chi_h \in X_h.
\end{align*} 
The discrete Laplacian operator is defined as: For $\phi\in H^1(\Omega)^2$, $\Delta_h \phi \in X_h$ satisfies  
\begin{align}
(\Delta_h \phi, v_h) \,\,=\,\, - (\nabla \phi, \nabla v_h) \ \forall v_h \in X_h.\label{laplace}
\end{align}
The definition for $\Delta_h$ is written the same way when applied in $W_h$, since this is simply the above definition
restricted to a single component.  

The discrete Stokes operator $A_h$ is defined as: For $\phi\in H^1(\Omega)^2$, find $A_h \phi \in V_h$ such that for all $v_h \in V_h$, 
\begin{align}
 (A_h \phi, v_h) \,\,=\,\, - (\nabla \phi, \nabla v_h).\label{stokes}
\end{align}
By the definition of discrete Laplace and Stokes operators, we have the Poincar\'e inequalities
\begin{align}
\|\nabla \chi_h\|\leq C_P \|\Delta_h \chi_h\|\ \forall \chi_h\in X_h,\label{Pforlap}
\\
\|\nabla \phi_h\|\leq C_P \|A_h \phi_h\|\ \forall \phi_h\in V_h.\label{Pforsto}
\end{align}
We recall the following discrete Agmon inequalities and discrete $L^p$ bounds \cite{HOR17,J16}:
\begin{align}
\|v_h\|_{L^{\infty}}& \leq C \|v_h\|^{1/2} \|A_h v_h\|^{1/2}\ \forall v_h\in V_h,\label{agmonlap}
\\
\|v_h\|_{L^{\infty}}& \leq C \|v_h\|^{1/2} \|\Delta_h v_h\|^{1/2}\ \forall v_h\in X_h,\label{agmonstk} \\
\|\nabla v_h\|_{L^{3}}& \leq C \|v_h\|^{1/3} \|\Delta_h v_h\|^{2/3}\ \forall v_h\in X_h. \label{l3ineq}
\end{align}
We note that all bounds above for $X_h$ trivially hold in $W_h$, since $W_h$ functions can be considered as components of functions in $X_h$.

A function space for measurement data interpolation is also needed.  Hence we require another regular conforming mesh $\tau_H$, and define 
$X_H = P_r(\tau_H)^2$ and $W_H = P_r(\tau_H)$ for some polynomial degree $r$.  We require that the coarse mesh interpolation operator $I_H$ used for data assimilation satisfies the following bounds:  for any $w \in H^1(\Omega)^d$,
\begin{align}
	\| I_H  (w) - w \| &\le C H \| \nabla w\|, \label{interp1}
	\\ \| I_H(w) \| &\leq  C \|w \|. \label{interp2}
\end{align}
These are key properties for the interpolation operator that allow for both mathematical theory as well as providing guidance on how small $H$ should be (i.e. how many measurement points are needed).  We note the same $I_H$ operator is used for vector functions and scalar functions, with it being applied component-wise for vector functions.

\section{Analysis of a CDA-VV scheme }\label{analysisDA}

We consider now a discretization of \eqref{VVda} that uses a finite element spatial discretization and backward Euler temporal discretization.  The backward
Euler discretization is chosen only for simplicity of analysis; all results extend to the analogous BDF2 scheme following analysis in \cite{articleAkbas,HOR17}.  One difference of our scheme below compared to other discretizations of CDA is that $I_H$ is also applied to the test functions in the nudging terms.  This was first proposed by the authors in \cite{RZ19}, and allows for a simpler stability analysis as well as to the use of special types of efficient interpolation operators.

	\begin{algorithm} \label{VVNLbe}
		Given $v_h^0\in V_h$ and $w_h^0\in W_h$, find $(v_h^{n+1}, w_h^{n+1}, P_h^{n+1}) \in (X_h, W_h, Q_h)$ for $n = 0,1,2,...$, satisfying 
		\begin{align}
		\frac{1}{\Delta t} \left( v_h^{n+1} - v_h^n,\chi_h \right) + (w_h^{n} \times v_h^{n+1}, \chi_h) - (P_h^{n+1},\nabla \cdot \chi_h) + \nu (\nabla v_h^{n+1}&,\nabla \chi_h)   \nonumber \\ + \muo_1 (I_H(v_h^{n+1} - u^{n+1}),I_H(\chi_h))&=  (f^{n+1},\chi_h), \label{nlda5} \\
		(\nabla \cdot v_h^{n+1},r_h)  &= 0, \label{nlda6}
		\\ \frac{1}{\Delta t} \left( w_h^{n+1} - w_h^n,\psi_h \right) +b^*(v_h^{n+1}, w_h^{n+1},\psi_h)  + \nu (\nabla w_h^{n+1},\nabla \psi_h) \nonumber\\
		 	+ \muo_2 (I_H(w_h^{n+1} - \rot u^{n+1}),I_H(\psi_h)) 
		&=  (\rot f^{n+1},\psi_h), \label{nlda7}
		\end{align}
		for all $(\chi_h, \psi_h, r_h) \in (X_h,W_h,Q_h)$, where  $I_H(u^{n+1})$, $I_H(\rot u^{n+1})$ are assumed known for all $n\ge 1$.
	\end{algorithm}
	We begin our analysis with long-time stability estimates, followed by long-time accuracy.
	
\subsection{Stability analysis of Algorithm \ref{VVNLbe}}

In this subsection, we prove that Algorithm \ref{VVNLbe} is unconditionally long-time $L^2$ and $H^1$ stable for both velocity and vorticity.  This property was proven for the scheme without nudging in \cite{HOR17}, and so these results show that CDA preserves this important property that is (seemingly) unique to VV schemes of this form.



\begin{lemma}[$L^2$ stability of velocity and vorticity ] \label{L2stabboth}
	Let $f \in L^\infty(0,\infty; L^2)$ and $u\in L^{\infty}(0,\infty; H^1)$. Then, for any $\dt>0$, any integer $n>0$, and nudging parameters $\mu_1,\mu_2\ge 0$, velocity and vorticity solutions to Algorithm \ref{VVNLbe} satisfy
	\begin{align}
		\|v_h^{n}\|^2 
	\leq&
	\alpha^{-n}\|v_h^{0}\|^2 
	+
	\frac{C C_P^{2}}{\nu}
	(\nu^{-1} \|f\|^2_{L^{\infty}(0,\infty;H^{-1})}
	+
	\mu_1 \|u\|^2_{L^{\infty}(0,\infty;L^2)})
	=:
	C_1,\label{L2stabvel}
	\\
	\|\omega_h^{n}\|^2
	\leq&
	\alpha^{-n}\|\omega_h^{0}\|^2
	+
	\frac{C C_P^{2}}{\nu } (\nu^{-1}\|f\|^2_{L^{\infty}(0,\infty;L^2)}
	+
	\mu_2 \|\rot u\|^2_{L^{\infty}(0,\infty;L^2)})
	=: C_2,\label{L2stabvortmu}
	\end{align}
	where $\alpha=1+\nu C_P^{-2}\Delta t$.
\end{lemma}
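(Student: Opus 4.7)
The plan is the standard energy-stability argument, carefully exploiting (i) the skew-symmetries that make the nonlinear terms and the pressure drop out, and (ii) the fact that the nudging appears symmetrically in trial and test via $I_H$, so the data-assimilation contribution on the left is manifestly positive. The two estimates are decoupled in the sense that the velocity bound can be derived without any vorticity information and vice versa; I would handle them in parallel.

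For the velocity bound \eqref{L2stabvel}, I would test \eqref{nlda5} with $\chi_h = v_h^{n+1}$ and use the continuity equation \eqref{nlda6} (with $r_h = P_h^{n+1}$) to kill the pressure term. The Coriolis-type term vanishes pointwise since $(w_h^n \times v_h^{n+1})\cdot v_h^{n+1}=0$, and the time-difference term is handled by the polarization identity $2(v_h^{n+1}-v_h^n,v_h^{n+1}) = \|v_h^{n+1}\|^2 - \|v_h^n\|^2 + \|v_h^{n+1}-v_h^n\|^2$. On the right, I bound $(f^{n+1},v_h^{n+1}) \le \|f^{n+1}\|_{H^{-1}}\|\nabla v_h^{n+1}\|$ and use Young to absorb $\tfrac{\nu}{2}\|\nabla v_h^{n+1}\|^2$ into the left, and bound $\mu_1(I_H(u^{n+1}),I_H(v_h^{n+1}))$ by Cauchy--Schwarz/Young, absorbing $\tfrac{\mu_1}{2}\|I_H(v_h^{n+1})\|^2$ into the nudging term on the left and estimating $\|I_H(u^{n+1})\|^2 \le C\|u^{n+1}\|^2$ via \eqref{interp2}. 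Dropping the nonnegative $\|v_h^{n+1}-v_h^n\|^2$ and $\tfrac{\mu_1}{2}\|I_H(v_h^{n+1})\|^2$ terms and applying the Poincar\'e inequality to convert $\tfrac{\nu}{2}\|\nabla v_h^{n+1}\|^2$ into $\tfrac{\nu}{2C_P^2}\|v_h^{n+1}\|^2$, I arrive at a recursion of the form
\[
\alpha\,\|v_h^{n+1}\|^2 \;\le\; \|v_h^n\|^2 \;+\; \tfrac{\Delta t}{\nu}\|f^{n+1}\|_{H^{-1}}^2 \;+\; C\mu_1 \Delta t\,\|u^{n+1}\|^2,
\]
with $\alpha = 1 + \nu C_P^{-2}\Delta t > 1$. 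Lemma~\ref{geoseries} then immediately delivers \eqref{L2stabvel}.

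The vorticity bound \eqref{L2stabvortmu} follows the same template applied to \eqref{nlda7} with $\psi_h = w_h^{n+1}$. The only structural differences are that the nonlinearity drops by the built-in skew-symmetry of $b^*$, i.e.\ $b^*(v_h^{n+1},w_h^{n+1},w_h^{n+1})=0$, and that the forcing term is treated as $(\rot f^{n+1},w_h^{n+1}) = -(f^{n+1},\nabla^\perp w_h^{n+1}) \le \|f^{n+1}\|\,\|\nabla w_h^{n+1}\|$ using integration by parts with periodic boundary conditions; this is what converts the $H^{-1}$ norm of $f$ in the velocity bound into the $L^2$ norm of $f$ in the vorticity bound. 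The nudging term is handled exactly as for velocity, using \eqref{interp2} to pass from $\|I_H(\rot u^{n+1})\|$ to a constant times $\|\rot u^{n+1}\|$. A second invocation of Lemma~\ref{geoseries} closes the argument.

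Nothing here is genuinely hard: the key observation that makes the whole thing clean is that the symmetric use of $I_H$ in test and trial produces the nonnegative left-hand term $\mu_i\|I_H(\cdot)\|^2$, which lets us split the data term by Young without needing any inverse inequality on $I_H$. The mild subtlety to be careful about is getting the right $f$-norm on the right-hand side in each of the two equations (whence the discrepancy between $H^{-1}$ and $L^2$ in $C_1$ versus $C_2$), and ensuring the constants $C$ absorb the interpolation constant from \eqref{interp2} so that the final form matches the stated $C_1,C_2$.
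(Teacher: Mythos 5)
Your proposal is correct and follows essentially the same route as the paper's proof: test with the current iterate (the paper uses $\chi_h=2\Delta t\,v_h^{n+1}$, which is the same up to scaling), let skew-symmetry kill the nonlinear and pressure terms, bound the forcing in the dual norm and the nudging data term via \eqref{interp2} with Young, drop the nonnegative remainders, apply Poincar\'e to form $\alpha=1+\nu C_P^{-2}\Delta t$, and invoke Lemma~\ref{geoseries}; the vorticity bound is handled identically. Your explicit treatment of $(\rot f^{n+1},w_h^{n+1})$ via integration by parts, which explains the $L^2$ norm of $f$ in $C_2$, is a detail the paper leaves implicit but is exactly the right justification.
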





\begin{proof}
	Begin by choosing $\chi_h=2\Delta t v_h^{n+1}$ in \eqref{nlda5}, which vanishes the nonlinear and pressure terms, and leaves
	\begin{align*}
	\|v_h^{n+1}\|^2 - \|v_h^{n}\|^2 + \|v_h^{n+1}-v_h^{n}\|^2
	+ 
	2\Delta t\nu \|\nabla v_h^{n+1}\|^2 
	+ 
	2\Delta t\muo_1  \|I_H(v_h^{n+1})\|^2
	\nonumber\\
	=
	2\Delta t(f^{n+1}, v_h^{n+1})
	+
	2\Delta t \muo_1 (I_H(u^{n+1}),I_H(v_h^{n+1})).
	\end{align*}
	The first right hand side term is bounded using the dual norm and Young's inequality via
	\begin{align*}
	2\Delta t(f^{n+1}, v_h^{n+1})\leq C\Delta t\nu^{-1}\|f^{n+1}\|^2_{-1} +\Delta t \nu\|\nabla  v_h^{n+1} \|^2,
	\end{align*}
	and for the interpolation term, we use Cauchy-Schwarz, the interpolation property (\ref{interp2}) and Young's inequality to get
	\begin{align*}
	2\Delta t\mu_1 (I_H(u^{n+1}),I_H(v^{n+1}_{h}))
	\leq&
	2\Delta t\mu_1 \|I_H(u^{n+1})\| \|I_H(v^{n+1}_{h})\|
	\nonumber\\
	\leq&
	C\Delta t\mu_1  \|u^{n+1}\| \|I_H(v^{n+1}_{h})\|
	\nonumber\\
	\leq&
	C \Delta t\mu_1  \|u^{n+1}\|^2 + \Delta t\mu_1 \|I_H(v^{n+1}_{h})\|^2.
	\end{align*}
	Combining the above estimates and dropping $\|v_h^{n+1}-v_h^{n}\|^2$ and $\|I_H(v^{n+1}_{h})\|^2$ from the left hand side produces the bound
	\begin{align*}
	\|v_h^{n+1}\|^2 
	+ 
	\Delta t \nu \|\nabla v_h^{n+1}\|^2 
	\leq
	\|v_h^{n}\|^2 
	+
	C\Delta t\nu^{-1}\|f^{n+1}\|^2_{-1} 
	+
	C\Delta t\mu_1 \ \|u^{n+1}\|^2,
	\end{align*}
	and thanks to the Poincar\'e inequality, we obtain
	\begin{align*}
	(1+ C_P^{-2} \Delta t \nu)\|v_h^{n+1}\|^2 
	\leq
	\|v_h^{n}\|^2 
	+
	C\Delta t \left( \nu^{-1}\|f^{n+1}\|^2_{-1} 
	+
	\mu_1\|u^{n+1}\|^2\right).
	\end{align*}
	Defining $\alpha=1+ C_P^{-2} \Delta t \nu> 1$, and then applying Lemma \ref{geoseries} reveals the $L^2$ stability bound (\ref{L2stabvel}) for the velocity solution of Algorithm \ref{VVNLbe}.
	
	Applying similar analysis to the above will produce the stated $L^2$ vorticity bound.

\end{proof}

\begin{lemma}[$H^1$ stability of velocity and vorticity ]\label{h1stanVVNLbothbe}
	Let $f \in L^\infty(0,\infty; H^1)$ and $u \in L^{\infty}(0,\infty; H^1)$. Then, for any $\dt>0$, any integer $n>0$, and nudging parameters $\mu_1,\mu_2\ge 0$, velocity and vorticity solutions to  Algorithm \ref{VVNLbe} satisfy
	\begin{align}
		\|\nabla v_h^{n}\|^2
	\leq&
	\alpha^{-n}
	\|\nabla v_h^{0}\|^2
	+
	\frac{C C_P^{2}}{\nu^2}
	\left(
	\|f\|^2_{L^{\infty}(0,\infty;L^2)}	
	+
	C_2^4 C_1^{2} \nu^{-2}
	+
	\mu_1^2\| u \|^2_{L^{\infty}(0,\infty;L^2)}
	+
	\mu_1^2   C_1^2
	\right)=: \tilde{C}_1,\label{H1stabvel}
	\\
	\|\nabla w_h^{n}\|^2 
	\leq&
	\alpha^{-n} \|\nabla w_h^{0}\|^2
	+
	\frac{C^2_P C}{\nu^2}
	\left(
	|\rot f^{n+1}\|^2 
	+
	 \nu^{-4} \tilde{C}_1^6 C_2^{2}
	+
	\nu^{-2} \tilde{C}_1^4 C_2^{2}
	+
	 \mu_2^2 \| \rot u^{n+1} \|^2 
	+ 
	\mu_2^2  C_2^2 
	\right),\label{H1stabvortmu}
	\end{align}
	where $\alpha=1+\nu C_P^{-2}\Delta t$.
\end{lemma}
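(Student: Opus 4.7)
The plan is to mirror the $L^2$ stability proof of Lemma \ref{L2stabboth}, but with the test functions boosted by the discrete Stokes/Laplacian operators. For the velocity bound I take $\chi_h = -2\Delta t\,A_h v_h^{n+1}$ in \eqref{nlda5}: since $A_h v_h^{n+1}\in V_h$ the pressure drops, the definition \eqref{stokes} turns the time term into a telescoping of $\|\nabla v_h^{n+1}\|^2 - \|\nabla v_h^n\|^2 + \|\nabla(v_h^{n+1}-v_h^n)\|^2$, and the viscous term becomes $2\Delta t\nu\,\|A_h v_h^{n+1}\|^2$. For the vorticity bound I test \eqref{nlda7} with $\psi_h = -2\Delta t\,\Delta_h w_h^{n+1}$, which by \eqref{laplace} produces the analogous telescoping of $\|\nabla w_h^{n+1}\|^2$ and the dissipative quantity $2\Delta t\nu\,\|\Delta_h w_h^{n+1}\|^2$ that will serve as the absorption reservoir.

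In the velocity step, forcing and nudging are treated as in Lemma \ref{L2stabboth}, with each Young's inequality calibrated to absorb only a fraction of $\Delta t\nu\|A_h v_h^{n+1}\|^2$ and to replace $\|v_h^{n+1}\|^2$ on the right by the constant $C_1$. The genuinely new term is the nonlinear coupling $(w_h^n\times v_h^{n+1},A_h v_h^{n+1})$, which I plan to bound via Cauchy--Schwarz followed by the discrete Agmon inequality \eqref{agmonlap}:
\begin{align*}
|(w_h^n\times v_h^{n+1},\,A_h v_h^{n+1})| \le C\|w_h^n\|\,\|v_h^{n+1}\|^{1/2}\|A_h v_h^{n+1}\|^{3/2}.
\end{align*}
A Young's inequality with conjugate pair $(4/3,4)$ then buries a fraction of $\Delta t\nu\|A_h v_h^{n+1}\|^2$ into the viscous term and leaves a residue of the form $C\Delta t\nu^{-3}\|w_h^n\|^4\|v_h^{n+1}\|^2$, controlled by $C_1$ and $C_2$ from Lemma \ref{L2stabboth}. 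Using \eqref{Pforsto} to dominate $\|\nabla v_h^{n+1}\|^2$ by $C_P^2\|A_h v_h^{n+1}\|^2$ and collecting everything yields a recursion $\alpha\,\|\nabla v_h^{n+1}\|^2 \le \|\nabla v_h^n\|^2 + B$ with $\alpha = 1 + \nu C_P^{-2}\Delta t$, and Lemma \ref{geoseries} immediately delivers \eqref{H1stabvel}.

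For the vorticity step the same architecture applies; the only new ingredient is the trilinear form $b^*(v_h^{n+1}, w_h^{n+1}, \Delta_h w_h^{n+1})$. Skew-symmetry does \emph{not} annihilate it since the third slot is $\Delta_h w_h^{n+1}\ne w_h^{n+1}$. I plan to expand $b^*$ by its definition and, after integrating by parts on the second half (modulo a manageable divergence residual in the Taylor--Hood case), reduce to $(v_h^{n+1}\cdot\nabla w_h^{n+1},\Delta_h w_h^{n+1})$, then bound via a H\"older $L^6$--$L^3$--$L^2$ split, the 2D Gagliardo--Nirenberg bound $\|v_h^{n+1}\|_{L^6} \le C\|v_h^{n+1}\|^{1/3}\|\nabla v_h^{n+1}\|^{2/3}$, and the discrete estimate \eqref{l3ineq} for $\|\nabla w_h^{n+1}\|_{L^3}$, obtaining
\begin{align*}
|(v_h^{n+1}\cdot\nabla w_h^{n+1},\,\Delta_h w_h^{n+1})| \le C\|v_h^{n+1}\|^{1/3}\|\nabla v_h^{n+1}\|^{2/3}\|w_h^{n+1}\|^{1/3}\|\Delta_h w_h^{n+1}\|^{5/3}.
\end{align*}
Young's with conjugate pair $(6/5,6)$ absorbs a fraction of $\nu\Delta t\|\Delta_h w_h^{n+1}\|^2$ into the dissipation and leaves the factor $\|v_h^{n+1}\|^2\|\nabla v_h^{n+1}\|^4\|w_h^{n+1}\|^2$, controlled by $C_1\,\tilde{C}_1^2\,C_2$ using \emph{the velocity $H^1$ bound \eqref{H1stabvel} just established} together with the $L^2$ constants. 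The forcing and $\mu_2$-nudging terms are handled exactly as in the velocity step with $u^{n+1}$ replaced by $\rot u^{n+1}$ and $C_1$ by $C_2$. Invoking \eqref{Pforlap} and Lemma \ref{geoseries} closes the recursion and delivers \eqref{H1stabvortmu}.

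The main obstacle is calibrating the Young's inequalities so that (i) the dissipative quantities $\|A_h v_h^{n+1}\|^2$ and $\|\Delta_h w_h^{n+1}\|^2$ fully absorb every factor produced on the right that is stronger than $H^1$, (ii) no discrete norm of order above $H^1$ of either unknown survives, and (iii) the surviving $L^2$ and $H^1$ factors of $v_h$ and $w_h$ can be replaced \emph{immediately} by $C_1$, $C_2$, and $\tilde{C}_1$. Requirement (iii) forces the velocity $H^1$ bound to be established before the vorticity one, which dictates the ordering of the two-part argument.
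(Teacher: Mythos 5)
Your proposal follows essentially the same route as the paper: test with the discrete Stokes/Laplacian images of the solution, use the discrete Agmon inequality \eqref{agmonstk} and the $L^3$ bound \eqref{l3ineq} with Young exponents $(4/3,4)$ and $(6/5,6)$ to absorb into the dissipative terms, invoke the $L^2$ constants $C_1,C_2$ and the freshly established $\tilde C_1$, and close with the Poincar\'e inequalities \eqref{Pforlap}--\eqref{Pforsto} and Lemma \ref{geoseries}. The only cosmetic differences are a sign convention on the test function and your use of Gagliardo--Nirenberg for $\|v_h^{n+1}\|_{L^6}$ where the paper uses the plain $H^1\hookrightarrow L^6$ embedding; the divergence residual of $b^*$ that you flag is handled in the paper exactly as you anticipate, via $\|\nabla v_h^{n+1}\|\,\|w_h^{n+1}\|_{L^\infty}\,\|\Delta_h w_h^{n+1}\|$ and discrete Agmon.
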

\begin{proof}
	After testing the velocity equation (\ref{nlda5}) with $\chi_h = 2\dt A_hv_h^{n+1}$, we obtain 
	\begin{align*}
	&\|\nabla v_h^{n+1}\|^2 
	-
	\|\nabla v_h^n\|^2 
	+
	\|\nabla(v_h^{n+1}-v_h^{n})\|^2
	+
	2\dt\nu \|A_hv_h^{n+1}\|^2
	\nonumber\\
	&\leq 
	2\dt (f^{n+1},A_h v_h^{n+1}) 
	+
	2\dt |(w_h^{n}\times v_h^{n+1}, A_hv_h^{n+1})| 
	+
	2\dt \mu_1 (I_H(u^{n+1}-v_h^{n+1}), I_H(A_h v_h^{n+1})).
	\end{align*}
	We now bound the right hand side terms. First, the forcing term is bounded by Cauchy-Schwarz and Young's inequalities via
	\begin{align}\label{fw}
	2\dt (f^{n+1},A_h v_h^{n+1}) 
	\leq
	C\dt \nu^{-1}\|f^{n+1}\|^2
	+
	\frac{\nu}{4}\dt\|A_h v^{n+1}\|^2.
	\end{align}
	Then, for the nonlinear terms, we again apply H\"older, discrete Agmon (\ref{agmonstk}) and generalized Young inequalities, and the result of Lemma \ref{L2stabboth} to get
	\begin{align*}
	2\dt |(w_h^{n}\times v_h^{n+1}, A_hv_h^{n+1})| 
	\leq&
	2\dt \|w_h^{n}\| \|v_h^{n+1}\|_{L^\infty} \|A_hv_h^{n+1}\|
	\nonumber\\
	\leq&
	 C\dt \|w_h^{n}\| \|v_h^{n+1}\|^{1/2} \|A_hv_h^{n+1}\|^{3/2}
	\nonumber\\
	\leq&
	C\dt\nu^{-3}   \|w_h^{n}\|^4 \|v_h^{n+1}\|^{2} 
	+
	\frac{\nu}{8}\dt \|A_hv_h^{n+1}\|^{2}
	\nonumber
	\\
	\leq&
	 C C_2^4 C_1^{2} \dt \nu^{-3}
	+
	\frac{\nu}{8}\dt \|A_hv_h^{n+1}\|^{2}.
	\end{align*}
	Lastly, the interpolation term is bounded using Cauchy-Schwarz and interpolation property \ref{interp2},  followed by Young's inequality and the result of Lemma \ref{L2stabboth} to obtain
	\begin{align}\label{intpstab}
	2\dt \mu_1 (I_H&(u^{n+1}-v_h^{n+1}), I_H(A_h v_h^{n+1}))
	\nonumber\\&
	\leq
	2\dt \mu_1\left(|(I_H u^{n+1}, I_H A_h v_h^{n+1})|+|(I_H v_h^{n+1}, I_H A_h v_h^{n+1})|\right)
	\nonumber\\
	&\leq
	C\dt \mu_1\|I_H u^{n+1} \| \|I_H A_h v_h^{n+1} \|+C\dt \mu_1\|I_H v_h^{n+1} \| \|I_H A_h v_h^{n+1}\|
	\nonumber	\\
	&\leq
	C\dt \mu_1^2 \nu ^{-1} \| u^{n+1} \|^2 + C\dt \mu_1^2 \nu^{-1}  C_1^2 + \frac{\nu}{8} \dt \| A_h v_h^{n+1}\|^2.
	\end{align}
	Combining all these bounds for right hand side terms and dropping nonnegative term  $\|\nabla(v_h^{n+1}-v_h^{n})\|^2 $ on left hand side give us that
	\begin{align*}
	\|\nabla v_h^{n+1}\|^2
	+
	\dt\nu \|A_hv_h^{n+1}\|^2 
	&\leq 
	\|\nabla v_h^{n}\|^2
	+
	C \dt \nu^{-1}\|f^{n+1}\|^2
	+
	\nu^{-3} \dt C C_2^4 C_1^{2} 
	+
	\dt \mu_1^2\nu^{-1} C \|u^{n+1}\|^2
	\\ & \,\,\,\,
	+ C\dt \mu_1^2 \nu^{-1}  C_1^2.
	\end{align*}	
	By the Poincar\'e inequality \eqref{Pforsto}, we now get
	\begin{align*}
	\alpha\|\nabla v_h^{n+1}\|^2\leq \|\nabla v_h^{n}\|^2+ C\dt \bigg( \nu^{-1}\|f^{n+1}\|^2
	+
	\nu^{-3}  C_2^4 C_1^{2} 
	+
	 \mu_1^2\nu^{-1}  \|u^{n+1}\|^2
	+  \mu_1^2 \nu^{-1}  C_1^2 \bigg),
	\end{align*}
	where $\alpha=1+\nu C_P^{-2} \Delta t$. Finally, we apply Lemma \ref{geoseries} and reveal (\ref{H1stabvel}).
	
	For the vorticity estimate, choose $\psi_h = 2\dt \Delta_h w_h^{n+1}$ in  (\ref{nlda7}) to get
	\begin{align*}
	&\|\nabla w_h^{n+1}\|^2 
	-
	\|\nabla w_h^{n}\|^2
	+
	\|\nabla (w_h^{n+1}-w_h^n)\|^2 
	+
	2\dt \nu \|\Delta_hw_h^{n+1}\|^2 
	\nonumber 
	\\  
	&\leq
	2\dt |(\rot f^{n+1}, \Delta_h w_h^{n+1})| 
	+
	2\dt |b^*(v_h^{n+1}, w_h^{n+1}, \Delta_h w_h^{n+1})|
	+ 
	2\dt\muo_2(I_H(\rot u^{n+1}-w_h^{n+1}),I_H(\Delta_h w_h^{n+1})). 
	\end{align*}
	From here, the proof follows the same strategy as the $H^1$ velocity proof above, except the nonlinear term is handled slightly differently.
%
%
	We use the discrete Agmon inequality (\ref{agmonstk}), the discrete Sobolev inequality (\ref{l3ineq}), the result of Lemma \ref{L2stabboth}, the $H^1$ stability bound for vorticity (\ref{H1stabvel}) proven above,  and the generalized Young's inequality, as follows.
	\begin{align*}
	2\dt |b^*&(v_h^{n+1}, w_h^{n+1}, \Delta_hw_h^{n+1})|
	\\
	&\leq
	2\dt
	\left(
	|(v_h^{n+1}\cdot\nabla w_h^{n+1}, \Delta_hw_h^{n+1})|
	+
	\frac{1}{2}
	|((\nabla\cdot v_h^{n+1}) w_h^{n+1}, \Delta_hw_h^{n+1})|
	\right)
	\\
	&\leq
	2\dt \|v_h^{n+1}\|_{L^6} \|\nabla w_h^{n+1}\|_{L^3} \|\Delta_hw_h^{n+1}\|
	+
	\dt
	\|\nabla v_h^{n+1}\| \|w_h^{n+1}\|_{L^{\infty}} \|\Delta_hw_h^{n+1}\|
	\\
	&\leq
	C\dt \tilde{C}_1 \| w_h^{n+1}\|^{1/3} \|\Delta_h w_h^{n+1}\|^{5/3}
	+
	C\dt
	\tilde{C}_1 \|w_h^{n+1}\|^{1/2} \|\Delta_h w_h^{n+1}\|^{3/2}
	\\
	&\leq
	C\dt \tilde{C}_1 C_2^{1/3} \|\Delta_h w_h^{n+1}\|^{5/3}
	+
	C\dt
	\tilde{C}_1 C_2^{1/2} \|\Delta_h w_h^{n+1}\|^{3/2}
		\\
	&\leq
	C\dt \nu^{-5} \tilde{C}_1^6 C_2^{2}
	+
	C\dt \nu^{-3}
	\tilde{C}_1^4 C_2^{2}
	+
	\frac{\nu}{3}\dt
	\|\Delta_h w_h^{n+1}\|^{2}.
	\end{align*}
	Now proceeding as in the velocity $H^1$ bound will produce the $H^1$ vorticity stability bound \eqref{H1stabvortmu}.
%
\end{proof}

\subsection{Long-time accuracy of Algorithm \ref{VVNLbe} }
We now consider the difference between the solutions of \eqref{nlda5} - \eqref{nlda7} to the NSE solution. We will show that the algorithm solution converges to the true solution, up to an optimal $O(\dt + h^{k+1})$ discretization error, independent of the initial condition, provided a restriction on the coarse mesh width and nudging parameters.  We will give two results, the first for $\mu_2>0$ and the second for $\mu_2=0$; while they both provide optimal long-time accuracy, when $\mu_2>0$ the convergence to the true solution occurs more rapidly in time.

In our theory below for long-time accuracy of Algorithm \ref{VVNLbe}, we assume the use of Scott-Vogelius elements. This is done for simplicity, as for non-divergence-free elements like Taylor-Hood elements, similar optimal results can be obtained (although with some additional terms and different constants) but require more technical details; see, e.g., \cite{GNT18}.

\begin{theorem}[Long-time $L^2$ accuracy of Algorithm \ref{VVNLbe} with $\muo_1>0$ and $\muo_2>0$] \label{nlthmL2}
	Let true solutions $u \in L^\infty(0, \infty ; H^{k+2}(\Omega))$, $ p \in L^\infty(0, \infty ; H^{k}(\Omega))$ where $k\geq 1$ and $u_{t}, u_{tt}\in L^\infty (0, \infty ; H^1)$, and we assume properties of the domain permits optimal $L^2$ and $H^1$ accuracy of the discrete Stokes projection in $V_h$ and discrete $H^1_0$ projection into $W_h$. Then, assume that time step $\dt$ is sufficiently small, and that $\mu_1$ and $\muo_2$ satisfy
$$\max\left\{1, C\nu^{-1}\left(\|\omega^{n+1}\|^2_{L^{\infty}}+\|\etaw^{n+1}\|^2_{L^{\infty}} \right) \right\}\leq \mu_1 \leq \frac{C\nu}{H^2},$$
$$\max\left\{1,C\nu^{-1}\left(\|u^{n+1}\|^2_{L^{3}}+\|\etav^{n+1}\|^2_{L^{3}} \right) \right\}\leq \mu_2 \leq \frac{C\nu}{H^2},$$
where $H$ is chosen so that this inequality holds. Then, for any time $t^n$, $n = 0, 1, 2,...$, we have for solutions of Algorithm \ref{VVNLbe} using Scott-Vogelius elements,
\begin{align*}
\|v_h^n-u^n\|^2 + \|\omega_h^n - \rot u^n\|^2 & \leq
(1 + \lambda \dt )^{-n}(\|v_h^0-u^0\|^2 + \|\omega_h^0 - \rot u^0\|^2)
+ C \lambda^{-1} R,
\end{align*}
where
	\[R:=\left(\mu_1^{-1} \dt^2 + \mu_2^{-1} \dt^2 + \nu^{-1} h^{2k+2} + \mu_1 h^{2k+2} + \mu_2 h^{2k+2}\right),   \]
	and
 $\lambda=\min \left\{ \frac{\mu_1}{4} + \frac{\nu C_P^{-2}}{4} , \frac{\mu_2}{4} +  \frac{\nu C_P^{-2}}{4} \right\}$ with $C$ independent of $\dt$, $h$ and $H$. 
\end{theorem}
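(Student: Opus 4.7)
The plan is to control the discrete parts of the errors $\ev^{n+1} := v_h^{n+1} - u^{n+1}$ and $\ew^{n+1} := w_h^{n+1} - \rot u^{n+1}$ via a projection-splitting argument. Let $\tilde u^{n+1} \in V_h$ denote the discrete Stokes projection of $u^{n+1}$ and $\tilde\omega^{n+1} \in W_h$ the discrete $H^1$ projection of $\rot u^{n+1}$, and write $\ev^{n+1} = \phiv^{n+1} - \etav^{n+1}$, $\ew^{n+1} = \phiw^{n+1} - \etaw^{n+1}$, where by the assumed optimal projection properties $\etav,\etaw$ are $O(h^{k+1})$ in $L^2$ and $O(h^k)$ in $H^1$. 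Subtracting the continuous weak VV formulation (tested with $\chi_h\in V_h$, $\psi_h\in W_h$) from Algorithm \ref{VVNLbe} gives error equations in which the pressure terms drop out (Scott--Vogelius makes $V_h$ pointwise divergence-free) and the viscous cross terms $\nu(\nabla\etav^{n+1},\nabla\chi_h)$, $\nu(\nabla\etaw^{n+1},\nabla\psi_h)$ vanish by the defining orthogonality of the projections. I would then test the velocity and vorticity error equations with $\chi_h = 2\dt\,\phiv^{n+1}$ and $\psi_h = 2\dt\,\phiw^{n+1}$, producing the usual polarization identity on the left.

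The key identity for each nudging term is
$$\muo_i\|I_H\phi\|^2 \;\ge\; \tfrac{\muo_i}{2}\|\phi\|^2 - \muo_i\|\phi - I_H\phi\|^2 \;\ge\; \tfrac{\muo_i}{2}\|\phi\|^2 - C\muo_i H^2\|\nabla\phi\|^2,$$
obtained from the triangle inequality together with (\ref{interp1}). The upper bound $\muo_i H^2 \le C\nu$ then absorbs $C\muo_i H^2\|\nabla\phi\|^2$ into the diffusive $\nu\|\nabla\phi\|^2$, so that combined with a Poincar\'e portion of the same diffusion, the coercive contribution on the left becomes $(\tfrac{\muo_i}{4} + \tfrac{\nu C_P^{-2}}{4})\|\phi\|^2$, which is precisely the $\lambda\dt$ factor in the statement.

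The main algebraic effort is the nonlinear bookkeeping. For velocity I would split
$w_h^n\times v_h^{n+1} - \omega^{n+1}\times u^{n+1} = (w_h^n - \omega^{n+1})\times v_h^{n+1} + \omega^{n+1}\times\ev^{n+1}$,
exploit the pointwise identity $(\omega^{n+1}\times\phiv^{n+1})\cdot\phiv^{n+1} = 0$ to kill the diagonal, and write $w_h^n - \omega^{n+1} = \phiw^n - \etaw^n + (\omega^n - \omega^{n+1})$, the last piece being $O(\dt)$ in $L^2$ by Taylor expansion. H\"older and Young's inequalities produce residuals controlled by the $L^\infty$ factors $\|\omega^{n+1}\|_{L^\infty}$ and $\|\etaw^{n+1}\|_{L^\infty}$; the lower bound $\muo_1 \ge C\nu^{-1}(\|\omega^{n+1}\|_{L^\infty}^2 + \|\etaw^{n+1}\|_{L^\infty}^2)$ is exactly what lets each of these be absorbed into $\tfrac{\muo_1}{4}\|\phiv^{n+1}\|^2$, with the leftover contributing $\muo_1 h^{2k+2}$ to $R$. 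The vorticity nonlinearity splits as $b^*(v_h^{n+1},w_h^{n+1},\psi_h) - b^*(u^{n+1},\omega^{n+1},\psi_h) = b^*(\ev^{n+1},w_h^{n+1},\psi_h) + b^*(u^{n+1},\ew^{n+1},\psi_h)$; skew-symmetry annihilates $b^*(u^{n+1},\phiw^{n+1},\phiw^{n+1})$, and the $H^1$ stability of $w_h^{n+1}$ (Lemma \ref{h1stanVVNLbothbe}), the discrete Sobolev bound (\ref{l3ineq}), and the $L^3$ part of the $\muo_2$ hypothesis play the analogous roles. Time-truncation residuals $(u_t^{n+1} - (u^{n+1}-u^n)/\dt,\phiv^{n+1})$ and the projection-difference terms $((\etav^{n+1}-\etav^n)/\dt,\phiv^{n+1})$, bounded by Young's inequality with weight $\muo_1$, produce the $\muo_1^{-1}\dt^2$ and $\muo_1^{-1}h^{2k+2}$-scale contributions to $R$, with analogous terms for $\muo_2$.

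Adding the two resulting inequalities collapses to
$(1+\lambda\dt)(\|\phiv^{n+1}\|^2 + \|\phiw^{n+1}\|^2) \le \|\phiv^n\|^2 + \|\phiw^n\|^2 + C\dt\,R$,
to which Lemma \ref{geoseries} applies, yielding geometric decay together with the asymptotic floor $C\lambda^{-1}R$. A final triangle inequality using $\|\etav^n\|, \|\etaw^n\| = O(h^{k+1})$ converts the bound on $\phiv,\phiw$ into the claimed bound on $\ev,\ew$. The main obstacle I anticipate is the simultaneous Young's-weight calibration that forces every quadratic residual to land either in the nudging-coercive term $\tfrac{\muo_i}{4}\|\phi\|^2$ (needing the stated lower bounds on $\muo_1,\muo_2$) or in the diffusive $\nu\|\nabla\phi\|^2$ (needing the upper bounds $\muo_i H^2\le C\nu$); the two-sided constraints on $\muo_1,\muo_2$ recorded in the hypothesis are precisely what makes this closure possible.
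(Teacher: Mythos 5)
Your outline follows the same architecture as the paper's proof: the identical Stokes/$H^1$ projection splitting, testing the error equations with $\phiv^{n+1}$ and $\phiw^{n+1}$, extracting coercivity of the form $\tfrac{\muo_i}{2}\|\phi\|^2 - C\muo_i H^2\|\nabla\phi\|^2$ from the nudging terms (the paper does this by adding and subtracting $\phi$ inside the $I_H$ inner product rather than via your triangle-inequality lower bound on $\|I_H\phi\|^2$, but the two are equivalent), absorbing the $H^2$ piece into the diffusion via $\muo_i H^2\le C\nu$, and closing with Lemma \ref{geoseries} and the triangle inequality. Two points deserve comment. First, for the vorticity nonlinearity you keep the discrete solution in the middle slot, $b^*(\ev^{n+1},w_h^{n+1},\cdot)$, and lean on the $H^1$ stability of Lemma \ref{h1stanVVNLbothbe}. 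The paper instead arranges the decomposition so that only $\etaw^{n+1}$ and $\omega^{n+1}$ ever occupy that slot, namely $b^*(\ev^{n+1},\etaw^{n+1},\phiw^{n+1}) + b^*(u^{n+1},\etaw^{n+1},\phiw^{n+1}) + b^*(\ev^{n+1},\omega^{n+1},\phiw^{n+1})$, so the accuracy proof never touches the a priori stability constants and the lower bounds on $\muo_1$ come out depending only on $\|\omega^{n+1}\|_{L^\infty}$ and $\|\etaw^{n+1}\|_{L^\infty}$ as stated. Your route can be made to close, but the resulting admissibility condition on $\muo_1$ would involve the large data-dependent constants $C_2,\tilde C_1$ rather than the norms in the theorem. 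Note also that the $L^3$ condition on $\muo_2$ actually originates in the \emph{velocity} equation: the couplings $(\ew^{n+1}\times u^{n+1},\phiv^{n+1})$ and $(\ew^{n+1}\times\etav^{n+1},\phiv^{n+1})$ produce $C\nu^{-1}(\|u^{n+1}\|_{L^3}^2+\|\etav^{n+1}\|_{L^3}^2)\|\phiw^{n+1}\|^2$, which must be absorbed by the vorticity nudging coercivity — this cross-absorption between the two equations is the reason both lower bounds are needed simultaneously.

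Second, there is a step your sketch glosses over that must be done to make the recursion close. Your splitting $w_h^n-\omega^{n+1}=\phiw^n-\etaw^n+(\omega^n-\omega^{n+1})$ leaves $\phiw^{n}$ at the \emph{old} time level paired with $u^{n+1}$ and $\etav^{n+1}$; Young's inequality then puts $C\nu^{-1}\|\cdot\|^2\|\phiw^{n}\|^2$ on the right-hand side, which cannot be absorbed by the level-$(n+1)$ coercive terms and destroys the hypothesis $r a_{n+1}\le a_n+B$ of Lemma \ref{geoseries}. The paper's remedy is to add and subtract $\ew^{n+1}$ in the first slot so that the main contributions carry $\phiw^{n+1}$ (absorbable by $\tfrac{\muo_2}{4}\|\phiw^{n+1}\|^2$) while the remainder carries $\phiw^{n+1}-\phiw^{n}$, which is absorbed into the telescoping term $\tfrac{1}{2\dt}\|\phiw^{n+1}-\phiw^n\|^2$ provided $\dt$ is sufficiently small — this is precisely where the time-step restriction in the hypotheses is used. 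With that adjustment your argument matches the paper's and yields the stated bound.
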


\begin{proof}
	The true NSE solution satisfies the VV system
	\begin{align*}
	\frac{1}{\dt} (u^{n+1} - u^n) + \omega^{n} \times u^{n+1} + \nabla P^{n+1} - \nu \Delta u^{n+1} & = f^{n+1} - \dt u_{tt}(t^*) + (\omega^{n} - \omega^{n+1} )\times u^{n+1} ,
	\\ \nabla \cdot u^{n+1} & = 0,
	\\ \frac{1}{\dt}(\omega^{n+1} - \omega^n) + u^{n+1}\cdot \nabla \omega^{n+1} - \nu \Delta \omega^{n+1} &= \rot f^{n+1} - \dt\omega_{tt}(t^{**}),
	\end{align*}
	where $u^n$ is the velocity at time $t^n$, $P^n$ the Bernoulli pressure, $\omega^n := \rot u^n$, and $t^*, t^{**} \in [t^n, t^{n+1}]$. Note that by Taylor expansion, we can write $\omega^{n}-\omega^{n+1}=-\dt \omega_t (s^*)$ where $s^* \in [t^n, t^{n+1}]$. 
	
	The difference equations are obtained by subtracting the solutions to Algorithm \ref{VVNLbe} from the NSE solutions by defining the differences between velocity and vorticity as $e_v^{n} := u^n - v_h^n$ and $e_w^n := \omega^n - w_h^n$, respectively. Next, we will decompose the error into a term that lies in the discrete space $V_h$ and one outside. To do so, add and subtract the discrete Stokes projection of $u^n$, denoted $s_h^n$, to $e_v^n$ and let $\eta_v^n := s_h^n - u^n$, $\phi_{h,v}^n := v_h^n - s_h^n$. Then $e_v^n = \phi_{h,v}^n + \eta_v^n$ and $\phiv^{n} \in V_h$. In a similar manner, by taking the $H^1_0$ projection of $\rot u^n$ into $W_h$, we obtain $e_w^n = \phi_{h,w}^n + \eta_w^n$ with $\phiw^n \in V_h$. 

	  For velocity, since $(\nabla \eta_v^{n+1},\nabla \phi_{h,v}^{n+1})=0$, the difference equation becomes
		\begin{align}
	\frac{1}{2\dt} &[ \|\phiv^{n+1}\|^2 - \|\phiv^{n}\|^2 + \|\phiv^{n+1} - \phiv^n\|^2 ] + \nu \|\nabla \phiv^{n+1}\|^2 + \muo_1 \| \phiv^{n+1}\|^2 \nonumber
	\\ & = 
	- \dt(u_{tt}(t^*), \phiv^{n+1}) 
	- \frac{1}{\dt} (\etav^{n+1}-\etav^n, \phiv^{n+1})
	+\dt (\omega_{tt}(s^*),\phiv^{n+1})
	-(e_{\omega}^n \times v^{n+1}_h, \phiv^{n+1})
	\nonumber
	\\ & \,\,\,\,\,\,\,
		-(\omega^{n+1}\times \etav^{n+1}, \phiv^{n+1})
	 -  2\muo_1(I_H(\phiv^{n+1}) - \phiv^{n+1}, \phiv^{n+1})
	- \muo_1 \|I_H\phiv^{n+1} - \phiv^{n+1}\|^2 
	\nonumber	\\ & \,\,\,\,\,\,\,
	- \muo_1 (I_H \etav^{n+1}, I_H \phiv^{n+1}) , \label{veldiff}
	\end{align}
	and similarly for vorticity, we have
\begin{align}
\frac{1}{2\dt} & [\|\phiw^{n+1}\|^2  - \|\phiw^{n}\|^2 + \|\phiw^{n+1} - \phiw^n\|^2] + \nu \|\nabla \phiw^{n+1}\|^2  + \muo_2\|\phiw^{n+1} \|^2\nonumber
\\ & \,\,\, = - \dt(\omega_{tt}(t^{**}), \phiw^{n+1}) 
- \frac{1}{\dt}(\etaw^{n+1} - \etaw^n, \phiw^{n+1})
+ b^*(\ev^{n+1}, \etaw^{n+1}, \phiw^{n+1})  \nonumber
\\ & \,\,\,\, 
+ b^*(u^{n+1}, \etaw^{n+1}, \phiw^{n+1}) 
+ b^*(\ev^{n+1}, \omega^{n+1}, \phiw^{n+1})
-2\muo_2(I_H(\phiw^{n+1})- \phiw^{n+1}, \phiw^{n+1}) 
\nonumber
\\ & \,\,\,\, 
-\muo_2\|I_H \phiw^{n+1}-\phiw^{n+1}\|^2
-\muo_2(I_H \etaw^{n+1}, I_H \phiw^{n+1}),\label{vortdiffmu}
\end{align}
where in  \eqref{veldiff} we have added and subtracted $\phiv^{n+1}$ to write it in the form found above using
	\begin{align*} 
	\muo_1(I_H \ev^{n+1}, I_H \chih) & = \muo_1(I_H \phiv^{n+1}, I_H \chih) + \muo_1(I_H \etav^{n+1}, I_H \chih) 
	\\ &=  \muo_1(I_H \phiv^{n+1} - \phiv^{n+1} + \phiv^{n+1}, I_H \chih - \phiv^{n+1} + \phiv^{n+1}) + \muo_1(I_H \etav^{n+1}, I_H \chih)  
	\\ & = \muo_1 \|\phiv^{n+1}\|^2 
	+ \muo_1(I_H\phiv^{n+1} - \phiv^{n+1}, \phiv^{n+1})  
	+ \muo_1(\phiv^{n+1}, I_H \chi_h - \phiv^{n+1}) 
	\\ & \,\,\,\, + \muo_1 (I_H \phiv^{n+1} - \phiv^{n+1}, I_H\chi_h - \phiv^{n+1})
	+ \muo_1(I_H \etav^{n+1}, \chih),  
	\end{align*}
	and similarly for \eqref{vortdiffmu}.
		
	Next, we bound the terms on right hand side of difference equations, starting with the velocity difference equation \eqref{veldiff}. The first three right hand side terms are bounded using Cauchy-Schwarz and Young's inequalities, via
	\begin{align*}
	\dt(u_{tt}(t^*), \phiv^{n+1}) &\leq \dt \|u_{tt}\|_{L^\infty(0,\infty;L^2(\Omega))} \|\phiv^{n+1}\|
	\\ &\leq C\dt^2\muo_1^{-1}\|u_{tt}\|^2_{L^\infty(0,\infty;L^2(\Omega))} + \frac{\muo_1}{20}\|\phiv^{n+1}\|^2,
	\end{align*}
	\begin{align*}
	\dt (\omega_{tt}(s^*),\phiv^{n+1})&\leq C\dt \|\omega_{tt}\|_{L^{\infty}(0,\infty;L^{2}(\Omega))} \|u^{n+1}\|_{L^{\infty}} \|\phiv^{n+1}\|
	\\
	&\leq
	C\dt^2 \mu_1^{-1} \|\omega_{tt}\|^2_{L^{\infty}(0,\infty;L^{2}(\Omega))} \|u^{n+1}\|^2_{L^{\infty}} + \frac{\mu_1}{20}\|\phiv^{n+1}\|^2,
	\end{align*}
	\begin{align*}
	\frac{1}{\dt} (\etav^{n+1} - \etav^n, \phiv^{n+1}) & = (\eta_{v,t}(s^*), \phiv^{n+1})
	\\ & \leq \|\eta_{v,t}(s^*)\| \| \phiv^{n+1}\|
	\\ & \leq C \mu_1^{-1}\|\eta_{v,t}(s^*)\|^2  + \frac{\mu_1}{20}\| \phiv^{n+1}\|^2,
	\end{align*}
	where $s^* \in [t^n, t^{n+1}]$.

	For nonlinear terms in \eqref{veldiff}, first we add and subtract $\ew^{n+1}$ in the first component, and $u^{n+1}$ in second component to get
	\begin{align*}
	(\ew^n \times v^{n+1}_h, \phiv^{n+1})
	&=
	((\ew^n-\ew^{n+1}) \times \ev^{n+1}, \phiv^{n+1})
	+
	(\ew^{n+1} \times \ev^{n+1}, \phiv^{n+1}) 
	\\&\,\,\ 
	+ ((\ew^n-\ew^{n+1}) \times u^{n+1}, \phiv^{n+1}) 
	+ (\ew^{n+1} \times u^{n+1}, \phiv^{n+1})
	\\&
	= ((\ew^n-\ew^{n+1}) \times \etav^{n+1}, \phiv^{n+1})
	+
	(\ew^{n+1} \times \etav^{n+1}, \phiv^{n+1})
	\\&\,\,\
	 	+ ((\ew^n-\ew^{n+1}) \times u^{n+1}, \phiv^{n+1}) 
	 + (\ew^{n+1} \times u^{n+1}, \phiv^{n+1}).
	\end{align*} 
	The all resulting terms are bounded by H\"older's and Young's inequalities to obtain
	\begin{align*}
	((\ew^n-\ew^{n+1}) \times \etav^{n+1}, \phiv^{n+1})&\leq C\|\phiw^n-\phiw^{n+1}\| \|\etav^{n+1}\|_{L^{3}} \|\phiv^{n+1}\|_{L^{6}}+ C\|\etaw^n-\etaw^{n+1}\|_{L^{\infty}} \|\etav^{n+1}\| \|\phiv^{n+1}\|
	\\
	&\leq
	C\nu^{-1}\|\phiw^n-\phiw^{n+1}\|^2 \|\etav^{n+1}\|^2_{L^{3}} + \frac{\nu}{8}\|\nabla\phiv^{n+1}\|^2 
	\\
	&\,\,\,\,\,
	+
	C\mu_1^{-1}\|\etaw^n-\etaw^{n+1}\|^2_{L^{\infty}} \|\etav^{n+1}\|^2+\frac{\mu_1}{20} \|\phiv^{n+1}\|^2,
	\end{align*}

	\begin{align*}
	(\ew^{n+1} \times \etav^{n+1}, \phiv^{n+1})&\leq C \|\phiw^{n+1}\| \|\etav^{n+1}\|_{L^{3}} \|\phiv^{n+1}\|_{L^{6}} +C \|\etaw^{n+1}\| \|\etav^{n+1}\|_{L^{\infty}} \|\phiv^{n+1}\|
	\\
	&\leq
	C\nu^{-1} \|\phiw^{n+1}\|^2 \|\etav^{n+1}\|^2_{L^{3}} + \frac{\nu}{8}\|\nabla \phiv^{n+1}\|^2 
	\\
	&\,\,\,\,\,
	+ C\mu_1^{-1}\|\etaw^{n+1}\|^2 \|\etav^{n+1}\|^2_{L^{\infty}} + \frac{\mu_1}{20}\|\phiv^{n+1}\|^2,
	\end{align*}
	
	\begin{align*}
	((\ew^n-\ew^{n+1}) \times u^{n+1}, \phiv^{n+1}) &\leq C\|\phiw^n-\phiw^{n+1}\| \|u^{n+1}\|_{L^{3}} \|\phiv^{n+1}\|_{L^{6}} + C\|\etaw^n-\etaw^{n+1}\| \|u^{n+1}\|_{L^{\infty}} \|\phiv^{n+1}\|
	\\
	&\leq
	C\nu^{-1}\|\phiw^n-\phiw^{n+1}\|^2 \|u^{n+1}\|^2_{L^{3}} + \frac{\nu}{8} \|\nabla \phiv^{n+1}\|^2
	\\
	&\,\,\,\,\, 
	+ C\mu_1^{-1}\|\etaw^n-\etaw^{n+1}\|^2 \|u^{n+1}\|^2_{L^{\infty}} + \frac{\mu_1}{20}\|\phiv^{n+1}\|^2,
	\end{align*}
	
	\begin{align*}
	 (\ew^{n+1} \times u^{n+1}, \phiv^{n+1})& \leq C \|\phiw^{n+1}\| \|u^{n+1}\|_{L^{3}} \|\phiv^{n+1}\|_{L^{6}} + C\|\etaw^{n+1}\| \|u^{n+1}\|_{L^{\infty}} \|\phiv^{n+1}\|
	 \\
	 & \leq C\nu^{-1} \|\phiw^{n+1}\|^2 \|u^{n+1}\|^2_{L^{3}} + \frac{\nu}{8} \|\nabla\phiv^{n+1}\|^2
	 	\\
	 &\,\,\,\,\,
	 + C\mu_1^{-1}\|\etaw^{n+1}\|^2 \|u^{n+1}\|_{L^{\infty}}^2 + \frac{\mu_1}{20}\|\phiv^{n+1}\|^2.
	\end{align*}
	Then, for last nonlinear term, we apply  H\"older's, Poincar\'e's and Young's inequalities and get
	\begin{align*}
	(\omega^{n+1}\times \etav^{n+1}, \phiv^{n+1})&\leq \|\omega^{n+1}\|_{L^{\infty}} \|\etav^{n+1}\| \|\phiv^{n+1}\|
	\\
	&
	\leq C\nu^{-1}\|\omega^{n+1}\|^2_{L^{\infty}} \|\etav^{n+1}\|^2 + \frac{\nu}{8} \|\nabla\phiv^{n+1}\|^2.
	\end{align*} 
	Next, the first interpolation term on the right hand side of \eqref{veldiff} will be bounded with Cauchy-Schwarz inequality and \eqref{interp1} to obtain 
	\begin{align*}
	\muo_1(I_H(\phiv^{n+1}) - \phiv^{n+1}, \phiv^{n+1}) &\leq \muo_1 \|I_H(\phiv^{n+1}) - \phiv^{n+1}\|\| \phiv^{n+1}\|
	\\ & \leq \muo_1 C H \|\nabla \phiv^{n+1}\|\|\phiv^{n+1}\| 
	\\ & \leq C \muo_1 H^2 \|\nabla \phiv^{n+1}\|^2 + \frac{\muo_1}{20}\|\phiv^{n+1}\|^2.
	\end{align*}
	For the second interpolation term, we apply inequality \eqref{interp2}, which yields
	\begin{align*}
	\muo_1\|I_H \phiv^{n+1} - \phiv^{n+1}\|^2 \leq \muo_1 H^2 \|\nabla\phiv^{n+1}\|^2 .
	\end{align*}
	Finally, the last interpolation term will be bounded using Cauchy-Schwarz, \eqref{interp2}, and Young's inequality to get the bound
	\begin{align*}
	\muo_1(I_H\etav^{n+1},I_H \phiv^{n+1}) 
	& \leq C\muo_1\| \etav^{n+1}\|^2 + \frac{\muo_1}{20}\|\phiv^{n+1}\|^2 .
	\end{align*}
	We now move on to the vorticity difference equation, \eqref{vortdiffmu}.  All the linear terms are majorized in a similar manner as in the velocity case, and so we show below the bounds only for the nonlinear terms.  Due to the use of Scott-Vogelius elements, the skew-symmetric form reduces to the usual convective form, so $b^*(u,v,w)=(u\cdot\nabla v,w)$, with $\| \nabla \cdot u \|=0$.  To bound the first nonlinear term on the right hand side of \eqref{vortdiffmu}, we begin by breaking up the velocity error term, then apply H\"older's and Young's inequalities, yielding
		\begin{align*}
	b^*(\ev^{n+1}, \etaw^{n+1}, \phiw^{n+1}) 
	&= 
	(\ev^{n+1}\cdot \nabla \etaw^{n+1}, \phiw^{n+1}) 
	\\
	&
	=
	(\phiv^{n+1}\cdot \nabla \etaw^{n+1}, \phiw^{n+1}) 
	+ (\etav^{n+1}\cdot \nabla \etaw^{n+1}, \phiw^{n+1})
	\\
		&= 
	(\phiv^{n+1}\cdot \nabla\phiw^{n+1}, \etaw^{n+1}) 
	+ (\etav^{n+1}\cdot \nabla\phiw^{n+1}, \etaw^{n+1})
	\\ & \leq 
	C \| \phiv^{n+1}\| \|\nabla \phiw^{n+1}\| \|\etaw^{n+1}\|_{L^\infty} 
	+ C\|\etav^{n+1}\| \|\nabla \phiw^{n+1}\| \|\etaw^{n+1}\|_{L^\infty}
	\\ & \leq
	C\nu^{-1} \|\phiv^{n+1}\|^2 \| \etaw^{n+1}\|_{L^\infty}^2
	+
	\frac{\nu}{10}\|\nabla\phiw^{n+1}\|^2 + C\nu^{-1}\|\etav^{n+1}\|^2 \| \etaw^{n+1}\|_{L^\infty}^2 
	\\ & \,\,\,\,
	+ \frac{\nu}{10} \|\nabla \phiw^{n+1}\|^2.
	\end{align*}
	For the second nonlinear term, we use H\"older's, P\'oincare's and Young's inequalities, which gives
	\begin{align*}
	b^*(u^{n+1}, \etaw^{n+1}, \phiw^{n+1}) &
	=(u^{n+1}\cdot \nabla \etaw^{n+1}, \phiw^{n+1})
	\\
	&
	= (u^{n+1}\cdot \nabla\phiw^{n+1}, \etaw^{n+1})
	 \\ &
	\leq C \|u^{n+1}\|_{L^\infty } \| \nabla \phiw^{n+1}\| \|\etaw^{n+1} \|
	\\ & \leq C \nu^{-1} \|u^{n+1}\|_{L^\infty }^2 \| \etaw^{n+1}\|^2 + \frac{\nu}{10} \| \nabla \phiw^{n+1}\|^2.
	\end{align*}
    For the last nonlinear term, we begin by breaking up the velocity error term, then apply H\"older's and Young's inequalities to get
		\begin{align*}
	b^*(\ev^{n+1}, \omega^{n+1}, \phiw^{n+1}) 
	&=
	(\ev^{n+1}\cdot \nabla \omega^{n+1}, \phiw^{n+1})
	\\
	& = (\phiv^{n+1}\cdot \nabla \omega^{n+1}, \phiw^{n+1}) + (\etav^{n+1}\cdot \nabla \omega^{n+1}, \phiw^{n+1})\\
	 & = (\phiv^{n+1}\cdot \nabla \phiw^{n+1},\omega^{n+1}) + (\etav^{n+1}\cdot \nabla\phiw^{n+1},\omega^{n+1})
	\\ & \leq  
	C\|\phiv^{n+1}\| \|\nabla \phiw^{n+1}\| \|\omega^{n+1}\|_{L^\infty}
	+
	C \|\etav^{n+1}\| \|\nabla\phiw^{n+1}\| \|\omega^{n+1}\|_{L^\infty}
	\\ & \leq 
	C \nu^{-1}\|\phiv^{n+1}\|^2 \| \omega^{n+1}\|_{L^\infty}^2 
	+ \frac{\nu}{10} \|\nabla \phiw^{n+1}\|^2  
	+C\nu^{-1} \|\etav^{n+1}\|^2 \| \omega^{n+1}\|_{L^\infty}^2  
	\\ & \,\,\,\, + \frac{\nu}{10}\|\nabla \phiw^{n+1}\|^2.
	\end{align*}
	
	Replacing the right hand sides of \eqref{veldiff} and \eqref{vortdiffmu} with the computed bounds and dropping nonnegative terms with $\|\phiv^{n+1}-\phiv^{n}\|^2 $ yields the bound
\begin{align*}
\frac{1}{2\dt}& \left( \|\phiv^{n+1}\|^2 + \|\phiw^{n+1}\|^2 - \|\phiv^n\|^2 - \|\phiw^n\|^2   \right) 
+
 \left(\frac{1}{2\dt}-C\nu^{-1} (\|\etav^{n+1}\|^2_{L^3} -\|u^{n+1}\|^2_{L^3} )\right) \|\phiw^{n+1}-\phiw^{n}\|^2
\\ &\ \ \ 
+ 
\frac{\nu}{4} \|\nabla \phiv^{n+1}\|^2
+
\left( \frac{\nu}{4} - C \muo_1 H^2  \right)\|\nabla \phiv^{n+1}\|^2
+
\frac{\nu}{4} \|\nabla \phiw^{n+1}\|^2 
+
\left( \frac{\nu}{4} - C \muo_2 H^2  \right)\|\nabla \phiw^{n+1}\|^2 
\\ &\ \ \ 
+
\frac{\muo_1}{4} \|\phiv^{n+1}\|^2 
+
\left( \frac{\muo_1}{4}-C\nu^{-1}(\|\omega^{n+1}\|^2_{L^{\infty}}+\|\etaw^{n+1}\|^2_{L^{\infty}}) \right) \|\phiv^{n+1}\|^2 
\\ &\ \ \ 
+
\frac{\muo_2}{4} \|\phiw^{n+1}\|^2 
+ 
\left( \frac{\muo_2}{4}-C\nu^{-1}(\|u^{n+1}\|^2_{L^{3}}+\|\etav^{n+1}\|^2_{L^{3}}) \right) \|\phiw^{n+1}\|^2 
\\ & 
\leq 
C \dt^2 
\left(
\muo_1^{-1}\|u_{tt}\|^2_{L^\infty(0,\infty ; L^2)} +\muo_1^{-1}\|\omega_{tt}\|^2_{L^\infty(0,\infty ; L^2)} \|u^{n+1}\|^2_{L^{\infty}} + \mu_2^{-1}\|\omega_{tt}\|^2_{L^\infty(0,\infty; L^2)} 
\right)
\\ & \,\,\,\, 
+
C \mu_1^{-1}
\big( 
\|\eta_{v,t}\|_{L^\infty(0,\infty;L^2)}^2 
+
\|\etaw^{n+1}-\etaw^{n}\|^2_{L^{\infty}} \|\etav^{n+1}\|^2
+
\|\etaw^{n+1}\|^2 \|\etav^{n+1}\|^2_{L^{\infty}}
+
\|\etaw^{n+1}-\etaw^{n}\|^2 \|u^{n+1}\|^2_{L^{\infty}}
\\ & \,\,\,\, 
+
\|\etaw^{n+1}\|^2 \|u^{n+1}\|^2_{L^{\infty}}
\big)
+
 C\mu_2^{-1}
 \|\eta_{w,t}\|_{L^\infty(0,\infty;L^2)}^2
+
 C \nu^{-1}
\big(
\|\omega^{n+1}\|^2_{L^{\infty}} \|\etav\|^2
+
 \|\etav^{n+1}\|^2\|\etaw^{n+1}\|_{L^\infty}^2 
 \\ & \,\,\,\, 
+
 \|\etav^{n+1}\|^2 \|\omega^{n+1}\|_{L^\infty}^2
+
\|\etaw^{n+1}\|^2\|u^{n+1}\|_{L^\infty}^2  
\big)
+ C \mu_1 \|\etav^{n+1}\|^2
+ C \mu_2 \|\etaw^{n+1}\|^2.  
\end{align*}
Using the assumptions on $H$ and the nudging parameters, the time step restriction, and smoothness of the true solution, this reduces to 
\begin{align*}
\frac{1}{2\dt}& \left( \|\phiv^{n+1}\|^2 + \|\phiw^{n+1}\|^2 - \|\phiv^n\|^2 - \|\phiw^n\|^2   \right) 
+ 
\frac{\nu}{4} \|\nabla \phiv^{n+1}\|^2
+
\frac{\nu}{4} \|\nabla \phiw^{n+1}\|^2 
+
\frac{\muo_1}{4} \|\phiv^{n+1}\|^2 
+
\frac{\muo_2}{4} \|\phiw^{n+1}\|^2 
\\ & 
\leq 
C \dt^2 
\left(
\muo_1^{-1} +\muo_1^{-1} + \mu_2^{-1} \right)
+
 C\mu_2^{-1}
 \|\eta_{w,t}\|_{L^\infty(0,\infty;L^2)}^2
\\ & \ 
+
C \mu_1^{-1}
\big( 
\|\eta_{v,t}\|_{L^\infty(0,\infty;L^2)}^2 
+
\|\etaw^{n+1}-\etaw^{n}\|^2_{L^{\infty}} \|\etav^{n+1}\|^2
+
\|\etaw^{n+1}\|^2 \|\etav^{n+1}\|^2_{L^{\infty}}
+
\|\etaw^{n+1}-\etaw^{n}\|^2 
+
\|\etaw^{n+1}\|^2 
\big)
\\ & \ 
 +C \nu^{-1}
\big(
 \|\etav\|^2
+
 \|\etav^{n+1}\|^2\|\etaw^{n+1}\|_{L^\infty}^2 
+
 \|\etav^{n+1}\|^2 
+
\|\etaw^{n+1}\|^2
\big)
+ C \mu_1 \|\etav^{n+1}\|^2
+ C \mu_2 \|\etaw^{n+1}\|^2.  
\end{align*}

Now define
	\begin{align*}
	\lambda_1 : =& \frac{\mu_1}{4} + \frac{\nu C_P^{-2}}{4} ,\\ 
   \lambda_2 : =& \frac{\mu_2}{4} +  \frac{\nu C_P^{-2}}{4} .
	\end{align*} 
Using this in the inequality after applying Poincare's inequality and multiplying each side by $2\dt$, we get  
	\begin{align*}
	(1 + \dt\lambda_1  )\|\phiv^{n+1}\|^2 +(1 + \dt\lambda_2 ) \|\phiw^{n+1}\|^2 & \leq C \dt \left(\mu_1^{-1} \dt^2 + \nu^{-1} \dt^2 + \nu^{-1} h^{2k+2} + \mu_1 h^{2k+2} + \mu_2 h^{2k+2}\right)
	\\ & \,\,\,\, 
	+ \|\phiv^{n}\|^2 + \|\phiw^{n}\|^2.
	\end{align*}
	Then, with 
	$R:=\left(\mu_1^{-1} \dt^2 + \nu^{-1} \dt^2 + \nu^{-1} h^{2k+2} + \mu_1 h^{2k+2} + \mu_2 h^{2k+2}\right)$ and
	$ \lambda : = \min \left\{ \lambda_1, \lambda_2  \right\}$, we obtain the bound		
	\begin{align*}
	(1 +\lambda\dt )\left(\|\phiv^{n+1}\|^2 +\|\phiw^{n+1}\|^2\right)  \leq C \dt R + \|\phiv^{n}\|^2 + \|\phiw^{n}\|^2.
	\end{align*}		
	By Lemma \ref{geoseries}, this implies
	\begin{align*}
	\|\phiv^{n+1}\|^2 + \|\phiw^{n+1}\|^2 & \leq C \lambda^{-1} R + (1 + \lambda \dt )^{-(n+1)}(\|\phiv^{0}\|^2 + \|\phiw^{0}\|^2).
	\end{align*}
	Lastly, applying triangle inequality completes the proof.
\end{proof}

\begin{theorem}[Long-time $L^2$ accuracy of Algorithm \ref{VVNLbe} with $\mu_1>0, \mu_2 =0$ ]
	Let true solution $u \in L^\infty(0, \infty ; H^{k+2}(\Omega))$, $ p \in L^\infty(0, \infty ; H^{k}(\Omega))$ where $k\geq 1$ and $u_{t}, u_{tt}, \in L^\infty (0, \infty ; H^1)$. Then, assume that time step $\dt$ is sufficiently small, $\mu_2=0$, and that $\mu_1$ satisfies
	$$\max \left\{1 , C\nu^{-1}(\|u^{n+1}\|_{L^{\infty}}+\|\etav^{n+1}\|_{L^{\infty}}) , C\nu^{-1}(\|\omega^{n+1}\|_{L^{\infty}}+\|\etaw^{n+1}\|_{L^{\infty}} ) \right\} \leq \mu_1 \leq \frac{C\nu}{H^2},$$
    where $H$ is chosen so that this inequality holds. Then for any time $t^n$, $n = 0, 1, 2,...$, solutions of of Algorithm \ref{VVNLbe} using Scott-Vogelius element satisfy 
	\begin{align}\label{l2accboth}
	\|v_h^n-u^n\|^2 + \|\omega_h^n - \rot u^n\|^2 & \leq
	(1 + \lambda \dt )^{-n}(\|v_h^0-u^0\|^2 + \|\omega_h^0 - \rot u^0\|^2)
	+ C\lambda^{-1} R,
	\end{align}
	where  
	\[R:=\left(\mu_1^{-1} \dt^2 + \nu^{-1} \dt^2 + \nu^{-1} h^{2k+2} + \mu_1 h^{2k+2} \right),   \]
	and $\lambda= \frac{C_P^{-2} \nu}{4} $ with $C$ independent of $\dt$, $h$ and $H$ . 
\end{theorem}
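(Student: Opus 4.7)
\bigskip
\noindent\textbf{Proof proposal.} The plan is to mirror the structure of the proof of Theorem~\ref{nlthmL2}, with the crucial modification that, since $\mu_2=0$, the term $\mu_2\|\phiw^{n+1}\|^2$ is no longer available on the left hand side of the vorticity difference equation to absorb cross terms. In its place we must use only the Poincar\'e-activated dissipation $\nu C_P^{-2}\|\phiw^{n+1}\|^2$ coming from $\nu\|\nabla\phiw^{n+1}\|^2$, together with $\mu_1\|\phiv^{n+1}\|^2$. This forces a rebalancing of the Young's inequality weights applied to the coupling terms that previously relied on $\mu_2$.

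First I would set up the error system exactly as in Theorem~\ref{nlthmL2}: subtract Algorithm~\ref{VVNLbe} (with $\mu_2=0$) from the VV form of the NSE, decompose $\ev^n = \phiv^n + \etav^n$ via the discrete Stokes projection and $\ew^n = \phiw^n + \etaw^n$ via the $H^1_0$ projection into $W_h$, and test the $\phiv$-equation with $2\dt\phiv^{n+1}$ and the $\phiw$-equation with $2\dt\phiw^{n+1}$. Almost every term in the resulting inequalities is treated identically to the previous theorem. The only terms that require new care are the vorticity-in-velocity coupling terms $(\ew^{n+1}\times u^{n+1},\phiv^{n+1})$ and $(\ew^{n+1}\times\etav^{n+1},\phiv^{n+1})$: in Theorem~\ref{nlthmL2} each produced a contribution of the form $C\nu^{-1}\|\phiw^{n+1}\|^2\|g\|^2_{L^3}$ that was absorbed into $\mu_2\|\phiw^{n+1}\|^2$.

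The key step is to rebound these two terms using an $L^2\times L^\infty\times L^2$ H\"older split followed by a weighted Young's inequality: for $g\in\{u^{n+1},\etav^{n+1}\}$,
\begin{equation*}
|(\phiw^{n+1}\times g,\phiv^{n+1})|
\;\le\; \|g\|_{L^\infty}\|\phiw^{n+1}\|\|\phiv^{n+1}\|
\;\le\; \frac{\mu_1}{c}\|\phiv^{n+1}\|^2 + \frac{C\|g\|_{L^\infty}^2}{\mu_1}\|\phiw^{n+1}\|^2.
\end{equation*}
Converting the second piece via Poincar\'e, $\|\phiw^{n+1}\|^2\le C_P^2\|\nabla\phiw^{n+1}\|^2$, lets us absorb it into $\tfrac{\nu}{4}\|\nabla\phiw^{n+1}\|^2$ provided $\mu_1\ge C\nu^{-1}\|g\|_{L^\infty}^2$, which is exactly the standing hypothesis of the theorem (applied to both $u^{n+1}$ and $\etav^{n+1}$). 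The $(\etaw^{n+1}\times g,\phiv^{n+1})$ pieces, which do not involve $\phiw$, reduce to projection-error terms and are handled as before. For the vorticity nonlinear terms on the right hand side of the $\phiw$-equation, the bounds from Theorem~\ref{nlthmL2} already produce only $\|\phiv^{n+1}\|^2$ coefficients, and these are absorbed by $\mu_1\|\phiv^{n+1}\|^2$ under the condition $\mu_1\ge C\nu^{-1}(\|\omega^{n+1}\|_{L^\infty}^2+\|\etaw^{n+1}\|_{L^\infty}^2)$, the remaining half of the hypothesis.

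After assembling all bounds, dropping the nonnegative jump terms, using the coarse-mesh condition $\mu_1\le C\nu H^{-2}$ to absorb the $C\mu_1 H^2\|\nabla\phiv^{n+1}\|^2$ and $C\mu_1 H^2\|\nabla\phiw^{n+1}\|^2$ pieces from the interpolation estimates, and invoking Poincar\'e on both $\|\nabla\phiv^{n+1}\|^2$ and $\|\nabla\phiw^{n+1}\|^2$, I expect to arrive at an inequality of the form
\begin{equation*}
(1+\lambda\dt)\bigl(\|\phiv^{n+1}\|^2+\|\phiw^{n+1}\|^2\bigr)
\;\le\; \|\phiv^n\|^2+\|\phiw^n\|^2 + C\dt\, R,
\end{equation*}
with $\lambda=C_P^{-2}\nu/4$, since the vorticity side now contributes only the Poincar\'e-converted rate (no $\mu_2/4$), and $R$ as stated. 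Lemma~\ref{geoseries} then produces the geometric decay in the initial data plus the $C\lambda^{-1}R$ asymptotic error, and the triangle inequality with the standard projection-error bounds $\|\etav\|,\|\etaw\|\le Ch^{k+1}$ finishes the proof. The main obstacle is the bookkeeping of constants in the Young's weights so that the single dissipation budget $\tfrac{\nu}{4}\|\nabla\phiw^{n+1}\|^2$ simultaneously produces the decay rate $\lambda$ and absorbs every $\|\phiw^{n+1}\|^2$ cross term; this is what forces the $L^\infty$-type conditions on $\mu_1$ rather than the weaker $L^3$-type conditions that sufficed when $\mu_2>0$.
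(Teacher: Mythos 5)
Your proposal is correct and follows essentially the same route as the paper: the paper likewise reuses the error equations and bounds of Theorem~\ref{nlthmL2} verbatim except for the two coupling terms $(\ew^{n+1}\times u^{n+1},\phiv^{n+1})$ and $(\ew^{n+1}\times\etav^{n+1},\phiv^{n+1})$, which it re-estimates with the Young weights flipped so that the $\phiw^{n+1}$ contribution appears as $C\mu_1^{-1}\|g\|_{L^\infty}^2\|\nabla\phiw^{n+1}\|^2$ and is absorbed into the viscous dissipation under the stated lower bound on $\mu_1$, yielding $\lambda=C_P^{-2}\nu/4$ exactly as you describe. Your identification of which half of the hypothesis on $\mu_1$ absorbs which family of terms matches the paper's final assembled inequality.
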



\begin{remark}
Algorithm \ref{VVNLbe} converges to the true solutions up to optimal discretization error in both cases $\mu_1, \mu_2 >0$ and $\mu_1>0, \mu_2=0$. The key difference between two cases is that when $\mu_2=0$, the convergence in time to reach optimal accuracy is much slower since $\lambda$ does not scale with the nudging parameters.  This phenomena is illustrated in our numerical tests.
\end{remark}

\begin{proof}
We follow the same steps with the proof of Theorem \ref{nlthmL2}.  The difference equation for velocity is already the same with (\ref{veldiff}), and just two nonlinear terms in the velocity difference equation are bounded with differently in this case.  By H\"older, Poincar\'e and Young's inequalities, we get the bounds

	\begin{align*}
(\ew^{n+1} \times \etav^{n+1}, \phiv^{n+1})&\leq C \|\phiw^{n+1}\| \|\etav^{n+1}\|_{L^{3}} \|\phiv^{n+1}\|_{L^{6}} +C \|\etaw^{n+1}\| \|\etav^{n+1}\|_{L^{\infty}} \|\phiv^{n+1}\|
\\
&\leq
C \muo_1^{-1}\|\nabla\phiw^{n+1}\|^2\|\etav^{n+1}\|^2_{L^\infty} + \frac{\muo_1}{16}\| \phiv^{n+1}\|^2
\\
&\,\,\,\,\,
+ C\mu_1^{-1}\|\etaw^{n+1}\|^2 \|\etav^{n+1}\|^2_{L^{\infty}} + \frac{\mu_1}{20}\|\phiv^{n+1}\|^2,
\end{align*}

\begin{align*}
(\ew^{n+1} \times u^{n+1}, \phiv^{n+1})& \leq C \|\phiw^{n+1}\| \|u^{n+1}\|_{L^{3}} \|\phiv^{n+1}\|_{L^{6}} + C\|\etaw^{n+1}\| \|u^{n+1}\|_{L^{\infty}} \|\phiv^{n+1}\|
\\
& \leq  C \muo_1^{-1}\|\nabla\phiw^{n+1}\|^2\|u^{n+1}\|_{L^\infty}^2 + \frac{\muo_1}{16}\| \phiv^{n+1}\|^2
\\
&\,\,\,\,\,
+ C\mu_1^{-1}\|\etaw^{n+1}\|^2 \|u^{n+1}\|_{L^{\infty}}^2 + \frac{\mu_1}{20}\|\phiv^{n+1}\|^2.
\end{align*}
All terms on the right hand side of vorticity difference equation for Theorem \ref{nlthmL2} are bounded identically.  Proceeding as in the previous proof, we arrive at
 \begin{align*}
 \frac{1}{2\dt}& \left( \|\phiv^{n+1}\|^2 + \|\phiw^{n+1}\|^2 - \|\phiv^n\|^2 - \|\phiw^n\|^2   \right) 
 +
 \left(\frac{1}{2\dt}-C\nu^{-1} (\|\etav^{n+1}\|^2_{L^3} -\|u^{n+1}\|^2_{L^3} )\right) \|\phiw^{n+1}-\phiw^{n}\|^2
 \\ &\ \ \ 
 +
 \frac{\muo_1}{4} \|\phiv^{n+1}\|^2 
 +
 \left( \frac{\muo_1}{4}-C\nu^{-1}(\|\omega^{n+1}\|^2_{L^{\infty}}+\|\etaw^{n+1}\|^2_{L^{\infty}}) \right) \|\phiv^{n+1}\|^2
 + 
\frac{\nu}{4} \|\nabla \phiv^{n+1}\|^2
 \\ & \,\,\,\, 
 +
\left( \frac{\nu}{4} - C \muo_1 H^2  \right)\|\nabla \phiv^{n+1}\|^2
+
\frac{\nu}{4} \|\nabla \phiw^{n+1}\|^2 
+
\left( \frac{\nu}{4} -C\mu_1^{-1}(\|u^{n+1}\|_{L^{\infty}}+\|\etav^{n+1}\|_{L^{\infty}})   \right)\|\nabla \phiw^{n+1}\|^2 
 \\ & 
 \leq 
 C \dt^2 
 \left(
 \muo_1^{-1}\|u_{tt}\|^2_{L^\infty(0,\infty ; L^2)} +\muo_1^{-1}\|\omega_{tt}\|^2_{L^\infty(0,\infty ; L^2)} \|u^{n+1}\|^2_{L^{\infty}} + \mu_2^{-1}\|\omega_{tt}\|^2_{L^\infty(0,\infty; L^2)} 
 \right)
 \\ & \,\,\,\, 
 +
 C \mu_1^{-1}
 \big( 
 \|\eta_{v,t}\|_{L^\infty(0,\infty;L^2)}^2 
 +
 \|\etaw^{n+1}-\etaw^{n}\|^2_{L^{\infty}} \|\etav^{n+1}\|^2
 +
 \|\etaw^{n+1}\|^2 \|\etav^{n+1}\|^2_{L^{\infty}}
 +
 \|\etaw^{n+1}-\etaw^{n}\|^2 \|u^{n+1}\|^2_{L^{\infty}}
 \\ & \,\,\,\, 
 +
 \|\etaw^{n+1}\|^2 \|u^{n+1}\|^2_{L^{\infty}}
 \big)
 +
 C \nu^{-1}
 \big(
 \|\omega^{n+1}\|^2_{L^{\infty}} \|\etav\|^2
 +
 \|\etav^{n+1}\|^2\|\etaw^{n+1}\|_{L^\infty}^2 
 +
 \|\etav^{n+1}\|^2 \|\omega^{n+1}\|_{L^\infty}^2
  \\ & \,\,\,\, 
 +
 \|\etaw^{n+1}\|^2\|u^{n+1}\|_{L^\infty}^2  
 \big)
 + C \mu_1 \|\etav^{n+1}\|^2.
 \end{align*}
Provided $\dt$ is sufficiently small and the restriction
$$\max \left\{1, C\nu^{-1}(\|u^{n+1}\|_{L^{\infty}}+\|\etav^{n+1}\|_{L^{\infty}}) , C\nu^{-1}(\|\omega^{n+1}\|_{L^{\infty}}+\|\etaw^{n+1}\|_{L^{\infty}} ) \right\} \leq \mu_1 \leq \frac{C\nu}{H^2},$$
holds, then applying Poincar\'e inequality to the terms on left hand side and using
\begin{align*}
\lambda_1 : =& \frac{\mu_1}{4}+\frac{C_P^{-2} \nu}{4},\\ 
\lambda_2 : =& \frac{C_P^{-2}\nu}{4},
\end{align*} 
and assumptions on the true solution, we obtain
	\begin{align*}
(1 + \dt\lambda_1  )\|\phiv^{n+1}\|^2 +(1 + \dt\lambda_2 ) \|\phiw^{n+1}\|^2 & \leq C \dt \left(\mu_1^{-1} \dt^2 + \nu^{-1} \dt^2 + \nu^{-1} h^{2k+2} + \mu_1 h^{2k+2} \right)
\\ & \,\,\,\, 
+ \|\phiv^{n}\|^2 + \|\phiw^{n}\|^2.
\end{align*}
From here, the proof is finished in the same way as the previous theorem.

\end{proof}


\subsection{Second order temporal discretization}

We now present results for a second order analogue of the first order algorithm studied above.  
\begin{algorithm} \label{VVNLbdf2}
	Find $(v_h^{n+1}, w_h^{n+1}, q_h^{n+1}) \in (X_h, W_h, Q_h)$ for $n = 0,1,2,...$, satisfying 
	\begin{align}
	\frac{1}{2 \Delta t} \left( 3v_h^{n+1} - 4v_h^n + v_h^{n-1},\chi_h \right) + ( (2w_h^{n} - w_h^{n-1}) \times v_h^{n+1}, \chi_h) - (P_h^{n+1},\nabla \cdot \chi_h) + \nu (\nabla & v_h^{n+1},\nabla \chi_h)   \nonumber \\ + \muo_1 (I_H(v_h^{n+1} - u^{n+1}),I_H \chi_h)&=  (f^{n+1},\chi_h), \label{nlda5a} \\
	(\nabla \cdot v_h^{n+1},r_h)  &= 0, \label{nlda6a}
	\\ \frac{1}{2\Delta t} \left( 3w_h^{n+1} - 4w_h^n - v_h^{n-1},\psi_h \right) + (v_h^{n+1}\cdot \nabla w_h^{n+1},\psi_h)  + \nu (\nabla w_h^{n+1},\nabla \psi_h)  \nonumber\\
		+ \muo_2 (I_H(w_h^{n+1} - \rot u^{n+1}),I_H(\psi_h)) &=  (\rot f^{n+1},\psi_h), \label{nlda7a}
	\end{align}
	for all $(\chi_h, \psi_h, r_h) \in (X_h,W_h,Q_h)$, with $v^0\in X$ and $I_H(u^{n+1})$, $I_H(\rot u^{n+1})$ given.

\end{algorithm}

Stability and convergence results follow in the same manner as the first order scheme results above, using G-stability theory as in \cite{AKR17,articleAkbas,LRZ19}.

%

\begin{theorem}[Long-time stability and accuracy of Algorithm \ref{VVNLbdf2} with $\muo_1>0$ and $\muo_2>0$] \label{nlthmbdf2L2}
For any time step $\dt>0$, and any time $t^n$, $n = 0, 1, 2,...$, we have that solutions of Algorithm \ref{VVNLbdf2} satisfy
	\[
	\| v_h^n \| + \| w_h^n \| + \| \nabla v_h^n \| + \| \nabla w_h^n \| \le C,
	\]
	with $C$ independent of $n$, $\dt,\ h,\ H$.
	
	Furthermore, if we suppose the true solution $u \in L^\infty(0, \infty ; H^{k+2}(\Omega))$, $ p \in L^\infty(0, \infty ; H^{k}(\Omega))$ where $k\geq 1$ and $u_{t}, u_{ttt}, \in L^\infty (0, \infty ; H^1)$, that time step $\dt$ is sufficiently small, Scott-Vogelius elements are used, and that $\mu_1$ and $\muo_2$ satisfy
	$C(u) \leq \mu_1,\mu_2 \leq \frac{C\nu}{H^2}$, we have the bound
	\begin{align*}
	\|v_h^n-u^n\|^2 & + \|\omega_h^n - \rot u^n\|^2  \leq \\
	& (1 + \lambda \dt )^{-n}(\|v_h^0-u^0\|^2 + \|\omega_h^0 - \rot u^0\|^2 + \|v_h^1-u^1\|^2 + \|\omega_h^1 - \rot u^1\|^2)
	+ C \lambda^{-1} R,
	\end{align*}
	where
	\[R:=\left(\mu_1^{-1} \dt^4 + \mu_2^{-1} \dt^4 + \nu^{-1} h^{2k+2} + \mu_1 h^{2k+2} + \mu_2 h^{2k+2}\right),   \]
	and
	$\lambda=\min \left\{ \frac{\mu_1}{4} + \frac{\nu C_P^{-2}}{4} , \frac{\mu_2}{4} +  \frac{\nu C_P^{-2}}{4} \right\}$ with $C$ independent of $\dt$, $h$ and $H$. 
\end{theorem}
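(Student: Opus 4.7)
The proof plan is to mirror closely the analysis for Algorithm \ref{VVNLbe}, replacing the backward-Euler time-identity by the BDF2 G-stability identity. Recall that for any sequence $\{x^n\}$ in a Hilbert space,
\begin{align*}
2(3x^{n+1} - 4x^n + x^{n-1},\, x^{n+1})
&= \|x^{n+1}\|^2 + \|2x^{n+1}-x^n\|^2 - \|x^n\|^2 - \|2x^n-x^{n-1}\|^2 \\
&\quad + \|x^{n+1} - 2x^n + x^{n-1}\|^2.
\end{align*}
Defining the G-energy $E_v^{n+1} := \|v_h^{n+1}\|^2 + \|2v_h^{n+1}-v_h^n\|^2$ (and analogously $E_w^{n+1}$, $\tilde E_{\phi_v}^{n+1}$, $\tilde E_{\phi_w}^{n+1}$ for error quantities) turns the BDF2 time-difference into a telescoping object, and lets us apply Lemma \ref{geoseries} to the sequence $\{E^n\}$ in exactly the same way as the BE proofs above apply it to $\{\|v_h^n\|^2\}$.

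For the unconditional stability portion, I would first test \eqref{nlda5a} with $\chi_h = 2\dt v_h^{n+1}$: the pressure term vanishes by discrete divergence-freeness, the rotational nonlinearity $((2w_h^n - w_h^{n-1})\times v_h^{n+1}, v_h^{n+1})=0$ pointwise, and the interpolation and forcing terms are bounded exactly as in the proof of Lemma \ref{L2stabboth} (using \eqref{interp2} and the dual norm on $f$). Together with the G-stability identity and Poincaré's inequality, this yields $\alpha E_v^{n+1} \le E_v^n + C\dt(\nu^{-1}\|f\|_{-1}^2 + \mu_1\|u\|^2)$ with $\alpha = 1 + \tfrac{2}{3}\nu C_P^{-2}\dt$; Lemma \ref{geoseries} then gives the uniform $L^2$ bound on $v_h^n$, and the identical argument for \eqref{nlda7a} (using skew-symmetry of $b^*$) gives it for $w_h^n$. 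For the $H^1$ bounds, I would test with $2\dt A_h v_h^{n+1}$ and $2\dt \Delta_h w_h^{n+1}$, bound the rotational nonlinearity via H\"older, the discrete Agmon inequality \eqref{agmonstk}, and the just-established $L^2$ bounds (replicating the computation in Lemma \ref{h1stanVVNLbothbe}, but noting that the extrapolated $2w_h^n - w_h^{n-1}$ satisfies a uniform $L^2$ bound by the $L^2$ stability step), absorbing the resulting $\|A_h v_h^{n+1}\|$ and $\|\Delta_h w_h^{n+1}\|$ terms on the left, and once again applying Lemma \ref{geoseries} to the G-energies.

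For the long-time accuracy portion, I would derive the BDF2 consistency equations by Taylor expansion: the truncation error for $\tfrac{1}{2\dt}(3u^{n+1}-4u^n+u^{n-1}) - u_t(t^{n+1})$ is $O(\dt^2 u_{ttt})$, and the extrapolation gap $\omega^{n+1} - (2\omega^n - \omega^{n-1})$ is also $O(\dt^2 \omega_{tt})$; both consistency residuals will contribute $C\dt^4 \mu_j^{-1}$ after absorption, explaining the $\dt^4$ in $R$. As in Theorem \ref{nlthmL2} I would split each error into a discrete-projection piece ($\etav^n, \etaw^n$) and a finite-element piece ($\phiv^n \in V_h,\ \phiw^n \in W_h$), test the $\phiv^{n+1}$- and $\phiw^{n+1}$-equations with themselves, and handle every right-hand-side term with the exact same H\"older / Young / Poincaré / Agmon estimates that appear in the BE proof; the only new nonlinear term comes from the extrapolated vorticity $2w_h^n - w_h^{n-1}$ in the velocity equation, which introduces error quantities $\phiw^n, \phiw^{n-1}$ that are controlled by the already-present energy norms on the left. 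The assumption $\mu_1,\mu_2 \ge C(u)$ and $\mu_j H^2 \le C\nu$ absorbs the bad terms exactly as before, and the G-stability identity leaves a recursion $(1+\lambda\dt)(\tilde E_{\phi_v}^{n+1} + \tilde E_{\phi_w}^{n+1}) \le (\tilde E_{\phi_v}^n + \tilde E_{\phi_w}^n) + C\dt R$ with the stated $\lambda$. A final application of Lemma \ref{geoseries} and the triangle inequality gives the claimed bound, where the appearance of $\|v_h^1 - u^1\|^2 + \|\omega_h^1 - \rot u^1\|^2$ in the initial data reflects the two-step nature of the scheme.

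The main obstacle is the bookkeeping around the extrapolated nonlinearity $(2w_h^n - w_h^{n-1})\times v_h^{n+1}$: in the error equation this produces cross terms coupling $\phiw^n$ and $\phiw^{n-1}$ to $\phiv^{n+1}$, and one must verify that these cross terms can be absorbed into the G-energies $\tilde E_{\phi_v}, \tilde E_{\phi_w}$ without degrading the decay constant $\lambda$. This is the technical heart of the proof and is where I would rely on the G-stability framework of \cite{AKR17,articleAkbas,LRZ19}, which provides precisely the norms in which such extrapolation errors telescope correctly; the remaining bounds are otherwise identical to those established in the BE proofs above.
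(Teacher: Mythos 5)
Your proposal matches the paper's intended argument: the paper in fact gives no detailed proof of this theorem, stating only that the results ``follow in the same manner as the first order scheme results above, using G-stability theory as in \cite{AKR17,articleAkbas,LRZ19},'' which is precisely the G-energy/telescoping strategy you outline, down to the treatment of the extrapolated nonlinearity and the $O(\dt^2)$ consistency errors that produce the $\dt^4$ terms in $R$ and the two-step initial data in the bound. Your sketch is therefore consistent with (and considerably more detailed than) what the paper provides for this statement.
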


\section{Numerical Experiments}
In this section, we illustrate the above theory with two numerical tests, both using Algorithm \ref{VVNLbdf2}.  Our first test is for convergence rates on a problem
with analytical solution, and the second test is for flow past a flat plate.  For both tests, we report results only for $(P_2,P_1)$ Taylor-Hood elements for velocity and pressure, and $P_2$ for vorticity; however we also tried Scott-Vogelius elements on barycenter refined meshes that produced similar numbers of degrees of freedom, and results were very similar to those of Taylor-Hood.  The coarse velocity and vorticity spaces $X_H$ and $W_H$ are defined to be piecewise constants on the same mesh used for the computations.  The interpolation operator $I_H$ was taken to be the $L^2$ projection operator onto $X_H$ (or $W_H$), which is known to satisfy \eqref{interp1}-\eqref{interp2} \cite{fem:book:ern:guermond}.

\subsection{Experiment 1:  convergence rate test}
For our first test, we investigate the theory above for Algorithm \ref{VVNLbdf2}.  Here we use the analytic solution
\[ 
u = \begin{bmatrix}
 \cos(\pi(y-t))\\\sin(\pi(x+t))
\end{bmatrix}, \quad \quad p = (1+t^2)\sin(x+y),  
\]
on the unit square domain $\Omega=(0,1)^2$ with kinematic viscosity $\nu=1.0$, and use and the NSE to determine $f$ and boundary conditions.  We take the final time $T=1$, and choose initials conditions for  Algorithm \ref{VVNLbdf2}'s velocity and vorticity to be 0.  For the discretization, $(P_2,P_1)$ Taylor-Hood elements are used for velocity and pressure, $P_2$ for vorticity, and a time step size of $\Delta t=0.001$.  From Section \ref{analysisDA}, we expect third order spatial convergence rate in the $L^2$ norm for large enough times.   Results are presented below for two cases, $\mu_2>0$ and $\mu_2=0$.

\subsubsection{Results for $\mu_1>0$ and $\mu_2>0$}

To test this case, we first calculated spatial convergence rates at the final time $T=1$ with the $L^2$ error, using successively refined uniform meshes and $\mu_1=\mu_2=100$.  Errors and rates are shown in table \ref{rates1}, and show clear third order spatial convergence of both velocity and vorticity.  Deterioration of the rates for the smallest $h$ is expected since the time step $\Delta t$ is fixed while the spatial mesh width decreases.

\begin{table}[H]
	\centering
				\begin{tabular}{|l|c|c|c|c|}
				\hline
				h& $\|e_v(T) \|$ & rate & $\|e_w(T)\|$ & rate\\
				\hline
				1/4 &2.62008e-03&   -  &7.70647e-03&   -  \\
				1/8  &3.20467e-04&3.0314&9.68456e-04&2.9923\\
				1/16 &3.97307e-05&3.0146&1.20888e-04&3.0041\\
				1/32 &4.94529e-06&3.0061&1.50809e-05&3.0029\\
				1/64 &6.19332e-07&2.9973&1.99325e-06&2.9195\\
				1/128&8.13141e-08&2.9247&3.15236e-07&2.5855\\
				\hline
			\end{tabular}
	\caption{Shown above are $L^2$ velocity and vorticity errors and convergence rates on varying mesh widths, at the final time $T=1$, using Algorithm \ref{VVNLbdf2} with $\muo_1=\muo_2=100$. }\label{rates1}
\end{table}

We next consider convergence to the true solution exponentially in time (up to discretization error).  Here we take $h=1/32$, and compute solutions using $\mu_1=\mu_2=\mu$, with $\mu=1,\ 10,\ 100,\ 1000$.  Results are shown in 
figure \ref{covplot1}, as $L^2$ error versus time for velocity and vorticity.  We observe exponential convergence in time of both velocity
and vorticity, up to about $10^{-5}$, which is consistent with the choices of $h$ and $\Delta t$ and the accuracy of the method.  We note that
as $\mu$ is increased, convergence is faster in time, which is consistent with our theory for the case of $\mu_1>0$ and $\mu_2>0$.

\begin{figure}[H]
	\centering
	\includegraphics[width = .45\textwidth, height=.30\textwidth]{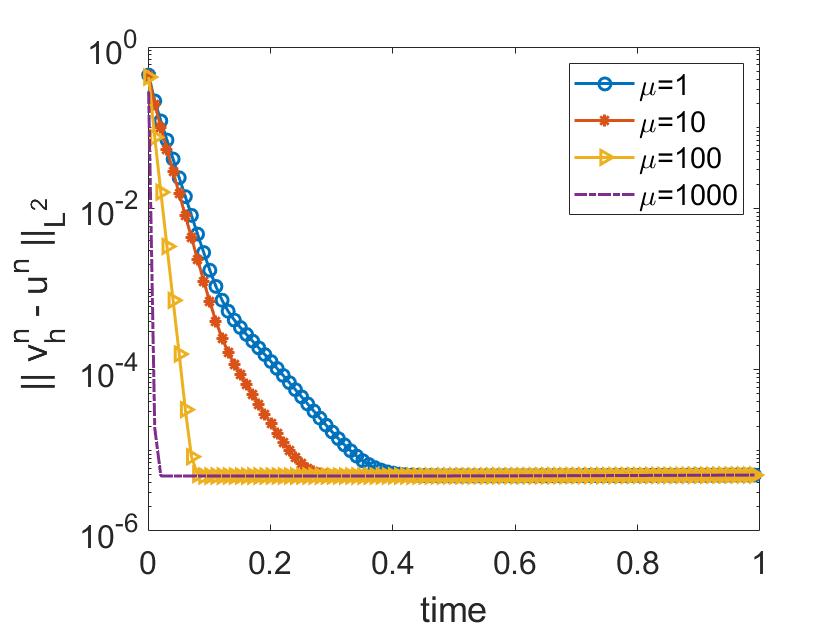}	
	\includegraphics[width = .45\textwidth, height=.30\textwidth]{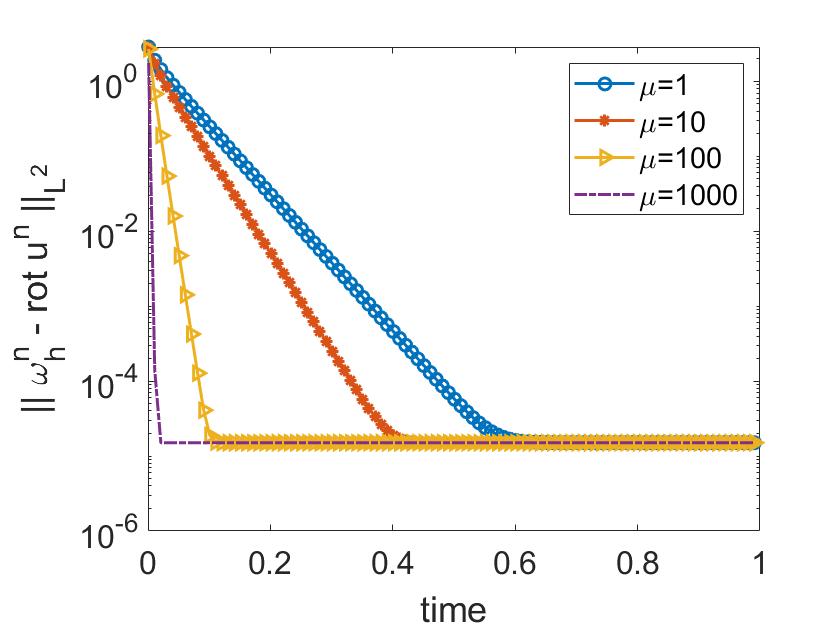}
	\caption{Shown above are $L^2$ velocity and vorticity errors for Algorithm \ref{VVNLbdf2} with $\mu_1=\mu_2=\muo$, with varying $\mu>0$.}\label{covplot1}
\end{figure}

\subsubsection{Results for $\mu_1>0$ and $\mu_2=0$}

We now consider the same tests as above, but with $\mu_2=0$.  This is an important case, since it is not always practical to obtain accurate vorticity measurement data.
Just as in the first case, we first calculated spatial convergence rates at the final time $T=1$ for the $L^2$ error, on the same successively refined uniform meshes, but now with $\mu_1=100$ and $\mu_2=0$.  Errors and 
rates are shown in table \ref{rates1}, and show clear third order spatial convergence of both velocity and vorticity.  Deterioration of the rates for the smallest $h$ is expected since
the time step $\Delta t$ was fixed at 0.001, although the vorticity errors are slightly worse than for the case of $\mu_2=100$ shown in table \ref{rates1}, and the deterioration of the rates occurs a bit earlier.  Hence we observe essentially the same velocity errors and rates compared to the case of $\mu_2=100$, and slightly worse vorticity error but still with optimal $L^2$ accuracy.

\begin{table}[H]
	\centering
		\begin{tabular}{|l|c|c|c|c|}
			\hline
			h& $\|e_v(T)\|$ & rate & $\|e_w(T)\|$ & rate \\
			\hline
			1/4  &2.62003e-03&   -  &7.79431e-03&  -  \\
			1/8  &3.20466e-04&3.0313&9.70492e-04&3.0056\\
			1/16 &3.97175e-05&3.0123&1.20897e-04&3.0049\\
			1/32 &4.94501e-06&3.0057&1.50883e-05&3.0023\\
			1/64 &6.17406e-07&3.0017&2.08215e-06&2.8573\\
			1/128&8.11244e-08&2.9280&9.37122e-07&1.1518\\
			\hline
		\end{tabular}

	\caption{Shown above are $L^2$ velocity and vorticity errors and convergence rates on varying mesh widths, at the final time $T=1$, using Algorithm \ref{VVNLbdf2} with $\muo_1=100$ and $\muo_2=100$. }\label{rates}
\end{table}

To test exponential convergence in time for the case of $\mu_2=0$, we again take $h=1/32$, and compute solutions using $\mu_1=1,\ 10,\ 100,\ 1000$.  Results are shown in 
figure \ref{covplot}, as $L^2$ error versus time for velocity and vorticity.  Although we do observe exponential convergence in time of both velocity
and vorticity, up to about $10^{-5}$ which is the same accuracy reached when $\mu_2=100$ above.
An important difference here compared to when $\mu_2=100$ is that the convergence of vorticity to the true solution is independent of $\mu_1$, and the convergence of
velocity is slower for larger choices of $\mu_1$.  This reduced dependence of the convergence on the nudging parameters when $\mu_2=0$ is consistent with our theory.  Hence without vorticity nudging, long-time optimal accuracy is still achieved, but it takes longer in time to get there.

\begin{figure}[H]
	\centering
	\includegraphics[width = .45\textwidth, height=.30\textwidth]{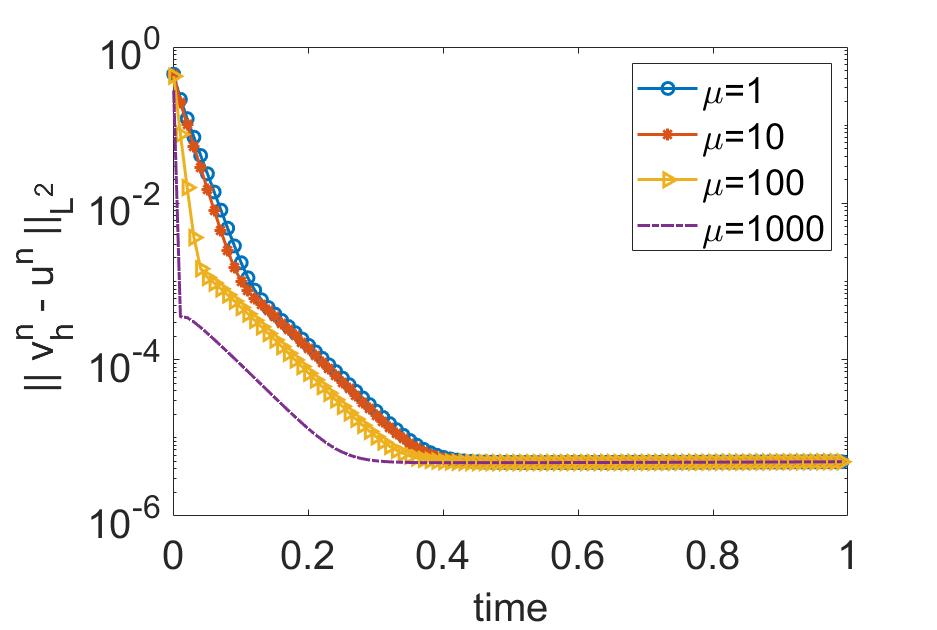}	
	\includegraphics[width = .45\textwidth, height=.30\textwidth]{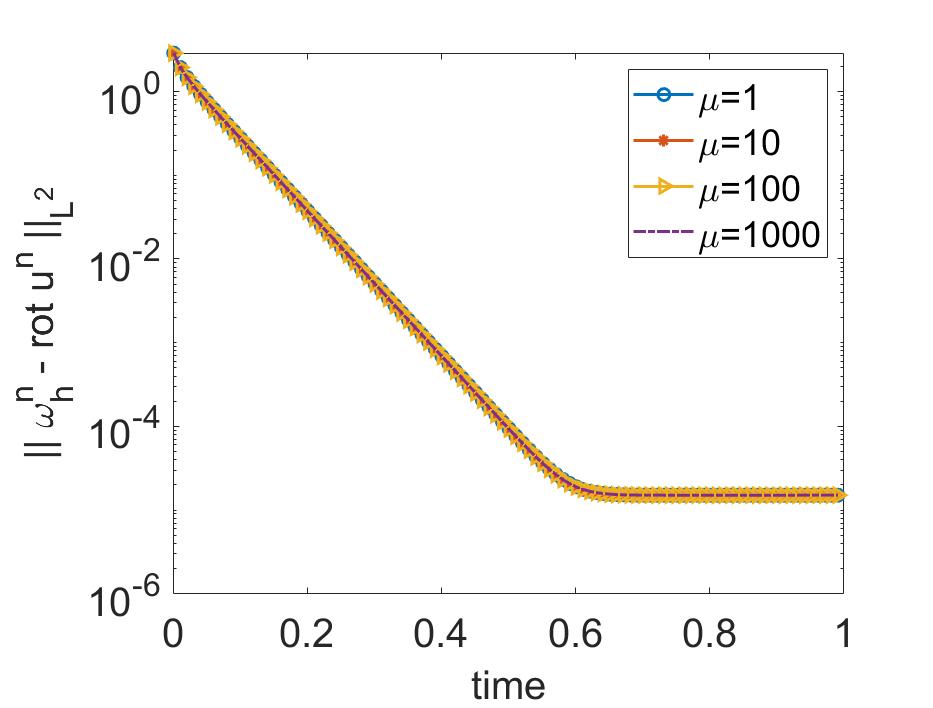}
	\caption{Shown above are $L^2$ velocity and vorticity errors (from left to right) for Algorithm \ref{VVNLbdf2} with varying $\mu_1$ and $\mu_2=0$. }\label{covplot}
\end{figure}

\subsection{Experiment 2: Flow past a normal flat plate}
{\color{black}
To test Algorithm \ref{VVNLbdf2} on a more practical problem, we consider flow past a flat plate with $Re=50$.  The domain of this problem is $[-7,20]\times[-10,10]$ rectangular channel with a $0.125\times1$ plate fixed ten units into the channel from the left, vertically centered. The inflow velocity is  $u_{in}=\langle0,1\rangle$, no-slip velocity and the corresponding natural vorticity boundary condition from \cite{Olshanskii2015NaturalVB} are used on the walls and plate, and homogeneous Neumann conditions are enforced weakly at the outflow.  A setup for the domain is shown in figure \ref{fig:plate_setup}.}  There is no external forcing applied, $f=0$. The viscosity is taken to be $\nu=1/50$ which is inversely proportional to $Re$, based on the height of the plate.  The end time for the test is $T=80$.  A DNS was run until for 160 time units (from t=-80 to t=80), and for $t>0$ measurement data for the VV-DA simulation was sampled from the DNS.

\begin{figure}[ht]
	\centering
\includegraphics[width=0.4\textwidth]{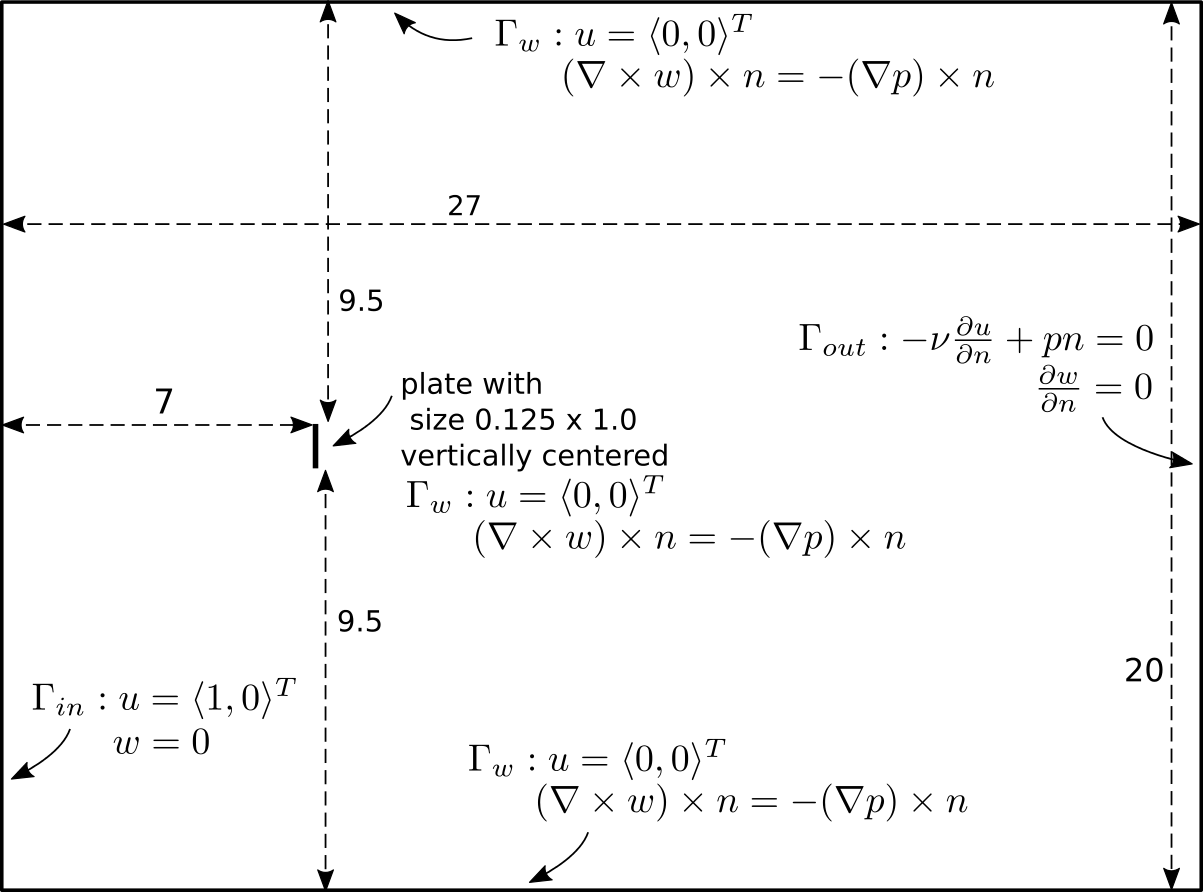}
 \caption{Setup for the flow past a normal flat plate.}
 \label{fig:plate_setup}
\end{figure}

We computed solutions using a Delaunay generated triangular meshes that provided $27,373$ total degree of freedom with $(P^2, P^1, P^2)$ velocity-pressure-vorticity elements, and time step $\dt=0.02$.  We first compared convergence in time to the DNS solution, for two cases: $\mu_1=\mu_2>0$ and $\mu_1>-0,\ \mu_2=0$.  Plots of $L^2$ velocity and vorticity error for both of these cases are shown in figure \ref{errorplot}, with varying nudging parameters.  There is a clear advantage seen in the plots for the simulations with $\mu_2>0$: when vorticity is nudged in addition to velocity, convergence to the true solution is much faster in time.  The convergence when $\mu_2=0$ appears to still be occurring, but is much slower and even by $t=80$ the $L^2$ vorticity error is barely smaller than $O(1)$.  We note that just like in the analytical test problem, when $\mu_2=0$ the vorticity convergence in time is independent of $\mu_1$.

\begin{figure}[H]
	\centering
			\includegraphics[width = .45\textwidth, height=.35\textwidth]{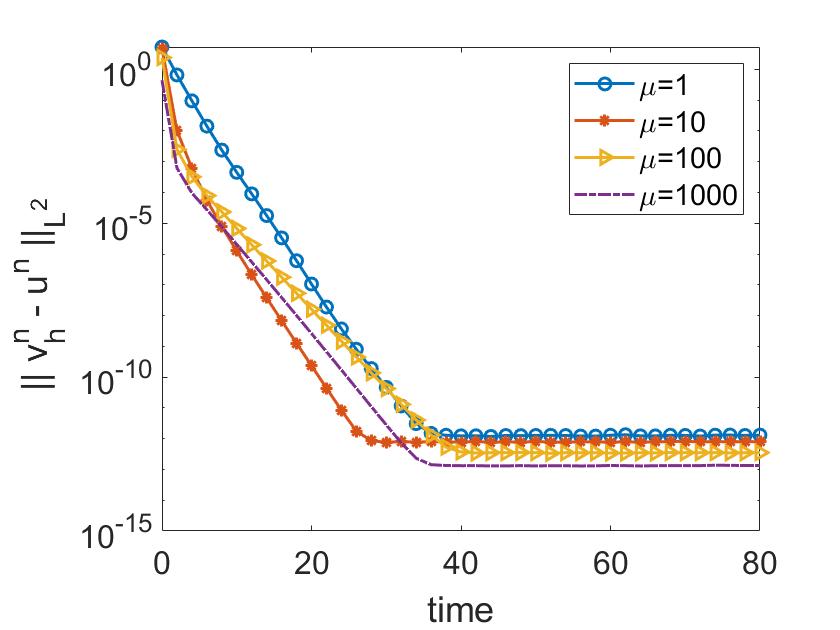}
	\includegraphics[width = .45\textwidth, height=.35\textwidth]{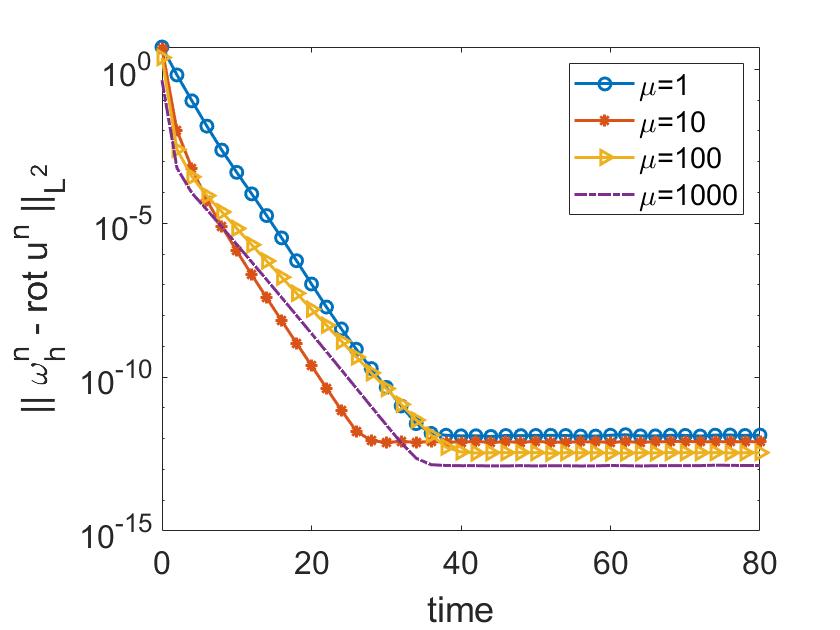}
	\\
	\includegraphics[width = .45\textwidth, height=.35\textwidth]{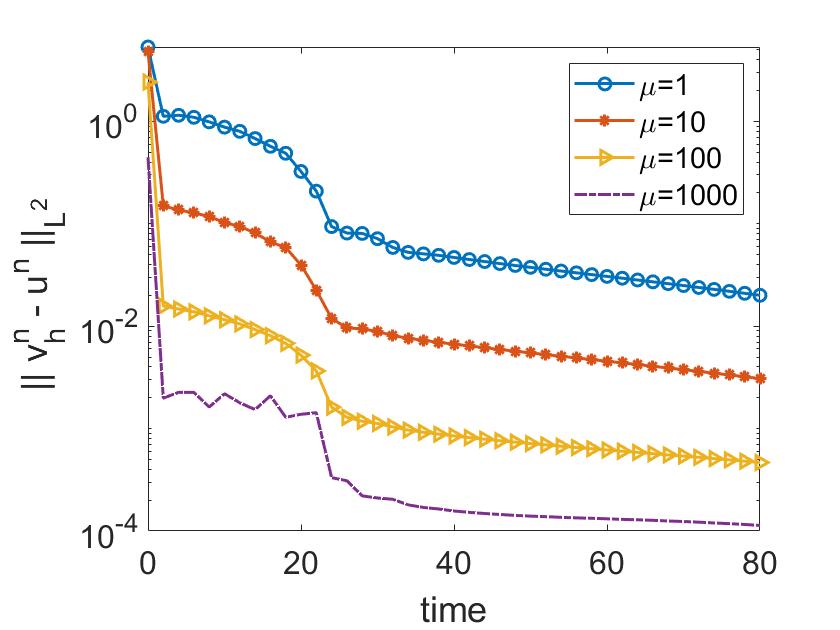}
	\includegraphics[width = .45\textwidth, height=.35\textwidth]{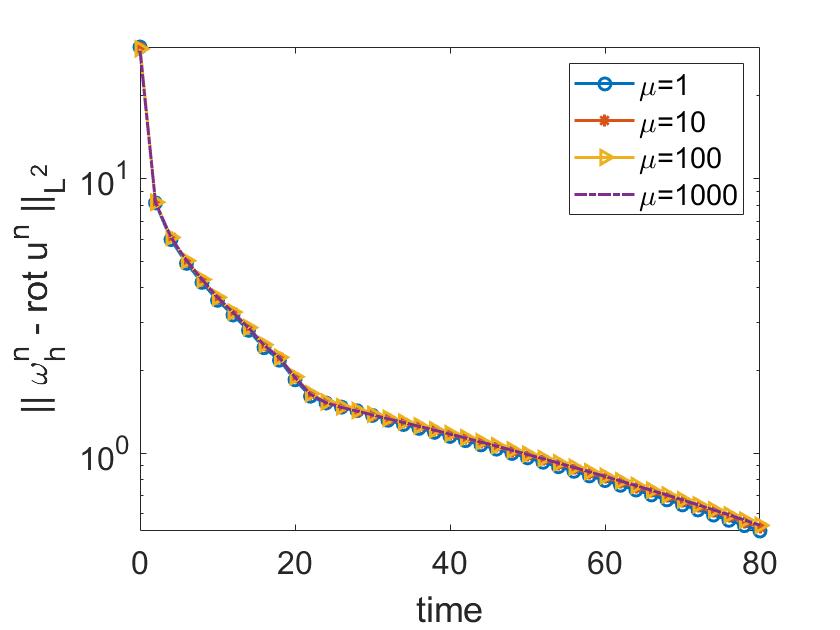}
	\caption{$L^2$ velocity and vorticity errors (from left to right) for Algorithm \ref{VVNLbdf2} with $\mu_1=\mu_2=\muo >0$(top) and $\mu_1=\mu>0, \mu_2=0$ (bottom) }\label{errorplot}
\end{figure}

To further illustrate the convergence of the DNS, we show contour plots of the DNS solution, the VVDA solution, and their difference, in figures \ref{both_vel}-\ref{justvel_vort}.  For these simulations, we used $\mu_1=\mu_2=10$ in figures \ref{both_vel}-\ref{both_vort}, but used $\mu_2=0$ for figures \ref{justvel_vel}-\ref{justvel_vort}.  As expected due to the plots in figure \ref{errorplot}, when $\mu_1=\mu_2=10$ we observe rapid convergence in the plots for VV-DA velocity and vorticity to the DNS velocity and vorticity, and by $t=1$ the contour plots are visually indistinguishable.  This is not the case, however, when $\mu_2=0$.  In this case, while the velocity plots do agree with DNS velocities by $t=1$ (in the eyeball norm), the vorticity error remains observable at $t=10$ and even at $t=20$ there are some small difference.  The contour plots of the errors at early times for vorticity show the largest errors occur near vortex centers, indicating that the VV-DA method is not accurately predicting the strength of the vortices.

\begin{figure}[H]
	\centering
	\includegraphics[width = .3\textwidth, height=.15\textwidth]{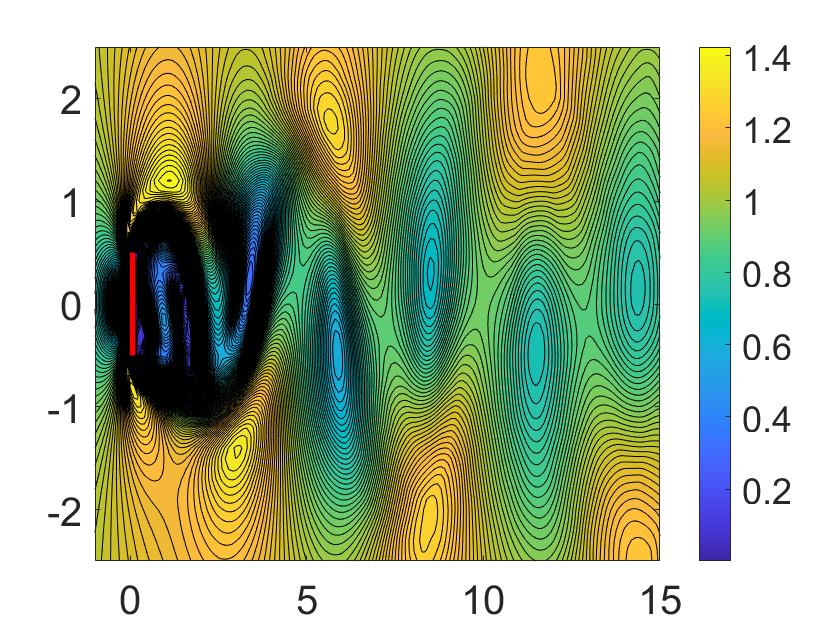}
	\includegraphics[width = .3\textwidth, height=.15\textwidth]{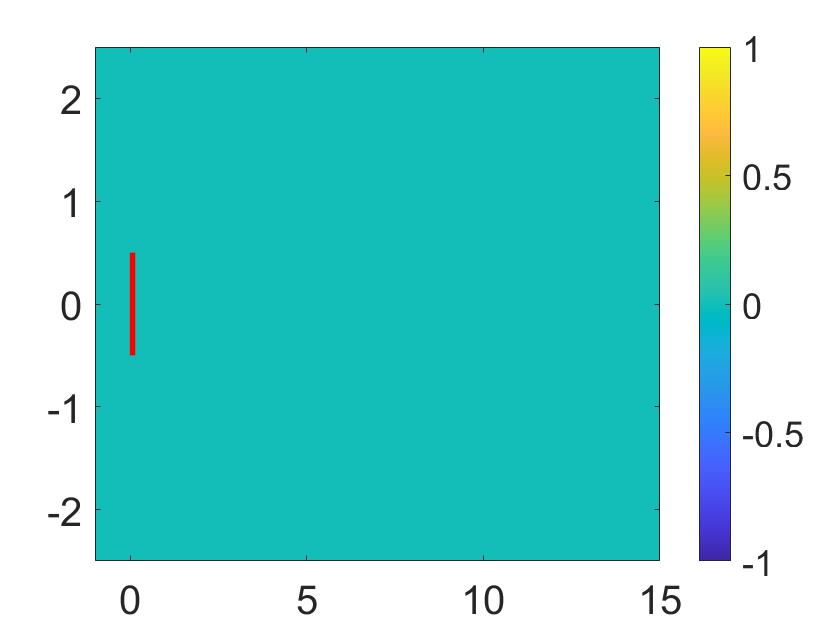}
			\quad\quad\quad\quad\quad\quad\quad\quad\quad\quad\quad\quad\quad\quad\quad
\\
	\includegraphics[width = .3\textwidth, height=.15\textwidth]{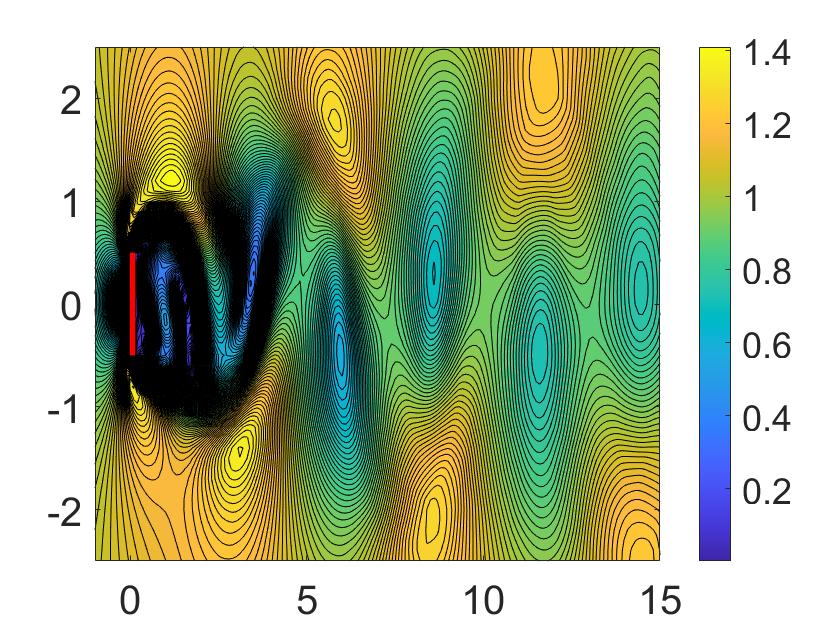}
\includegraphics[width = .3\textwidth, height=.15\textwidth]{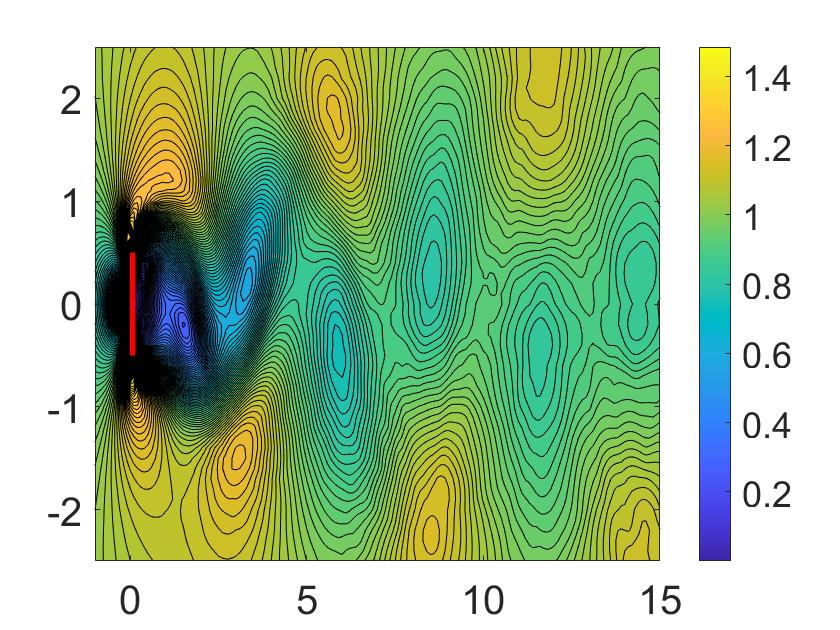}
\includegraphics[width = .3\textwidth, height=.15\textwidth]{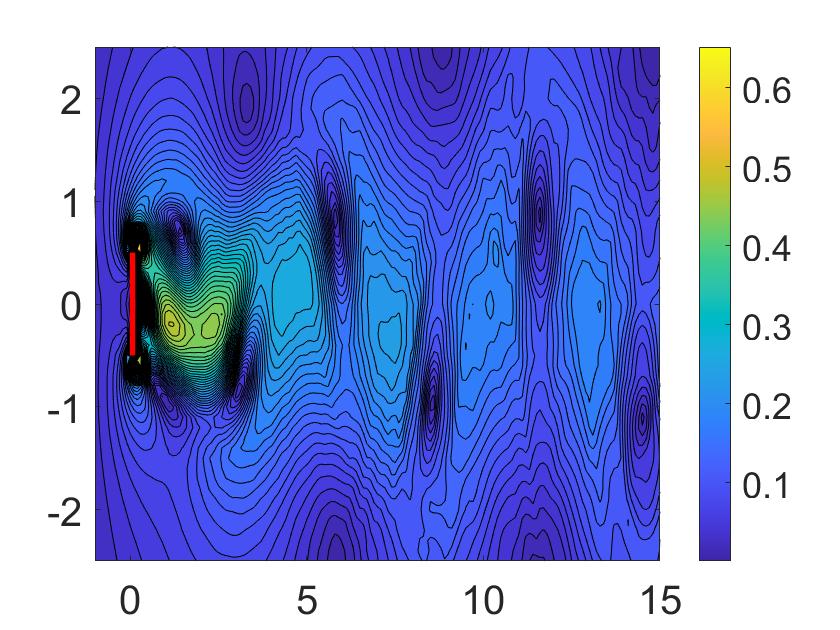}
\\
	\includegraphics[width = .3\textwidth, height=.15\textwidth]{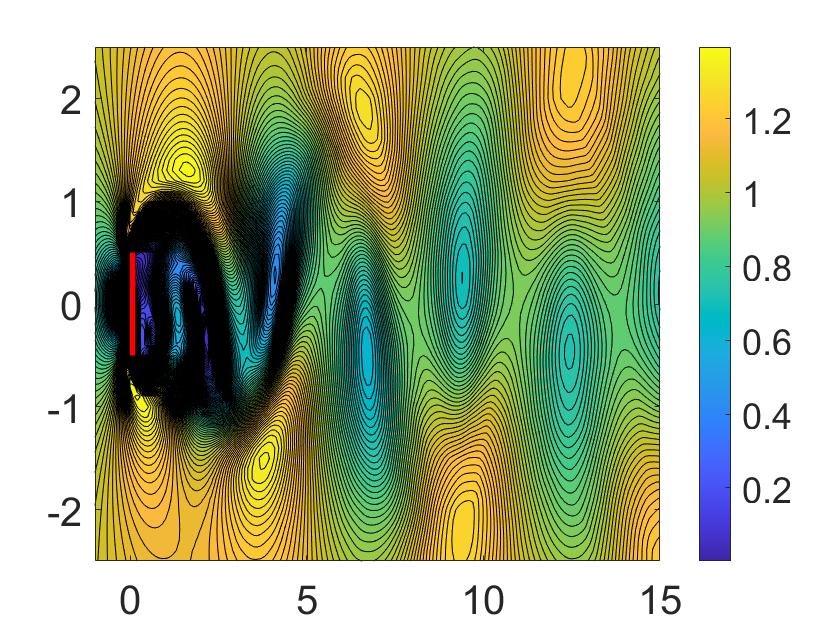}
\includegraphics[width = .3\textwidth, height=.15\textwidth]{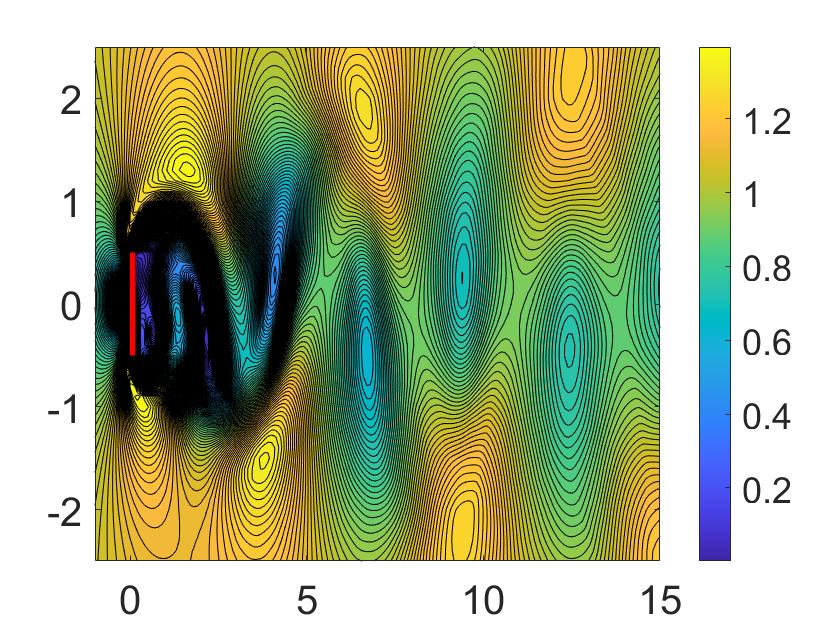}
\includegraphics[width = .3\textwidth, height=.15\textwidth]{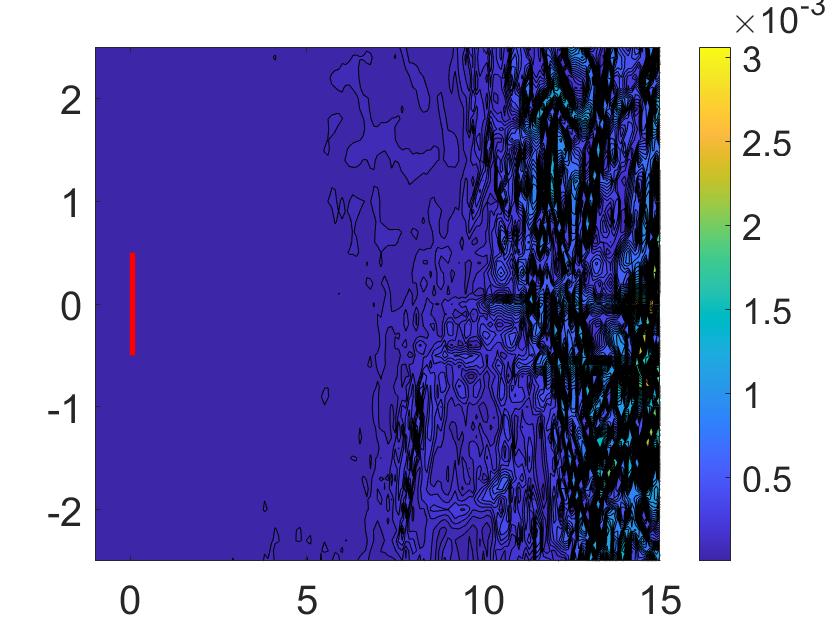}
\\
	\includegraphics[width = .3\textwidth, height=.15\textwidth]{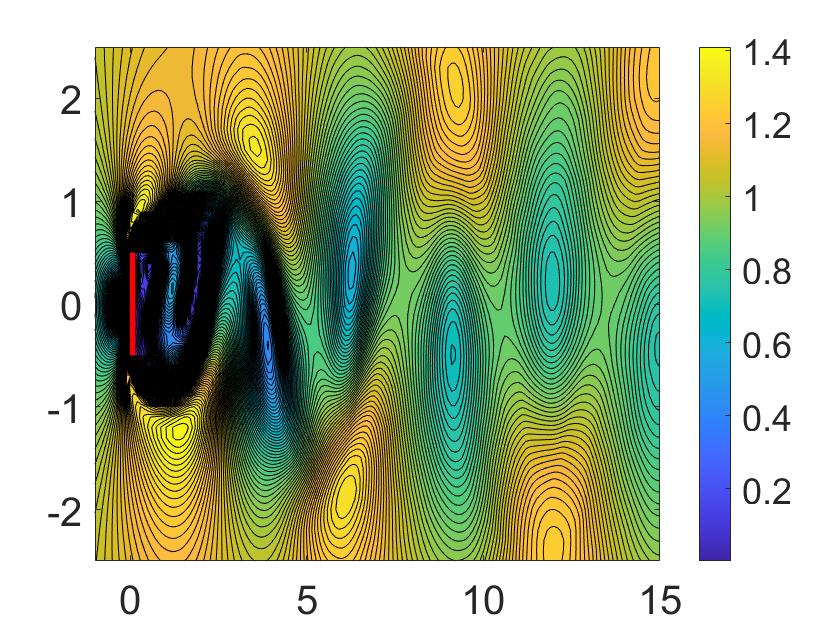}
\includegraphics[width = .3\textwidth, height=.15\textwidth]{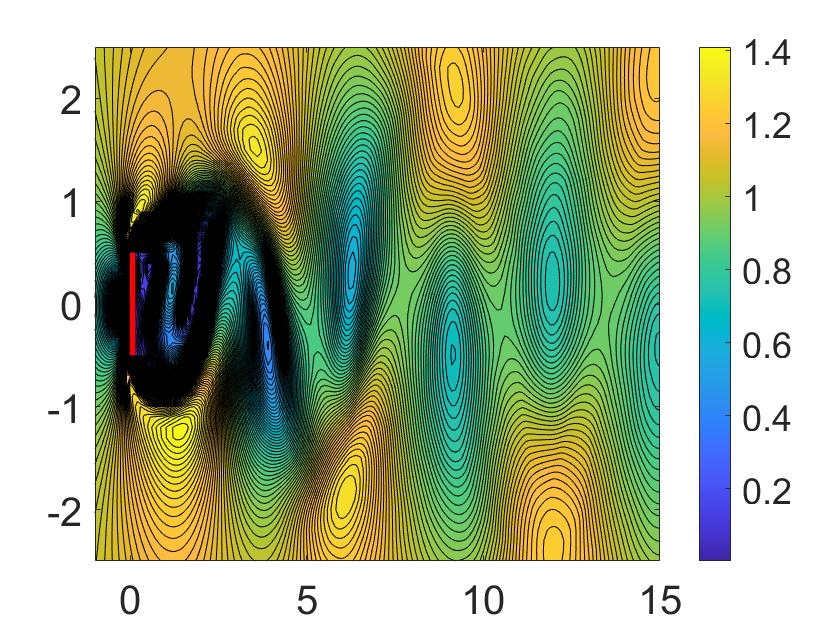}
\includegraphics[width = .3\textwidth, height=.15\textwidth]{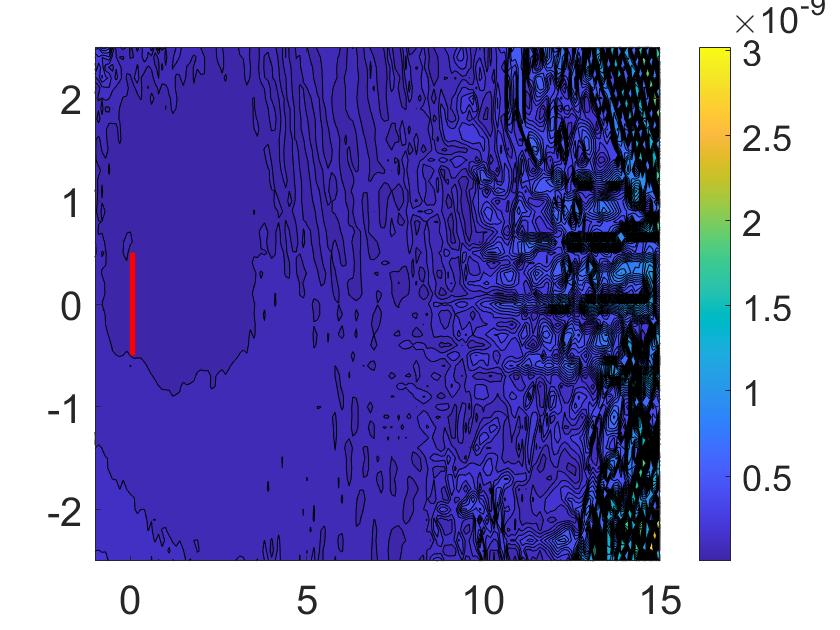}
\\
	\includegraphics[width = .3\textwidth, height=.15\textwidth]{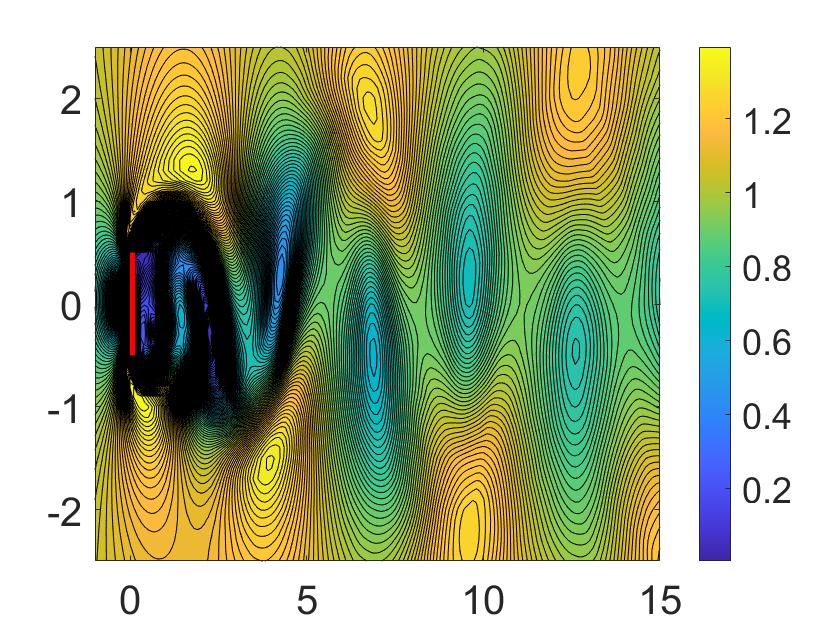}
\includegraphics[width = .3\textwidth, height=.15\textwidth]{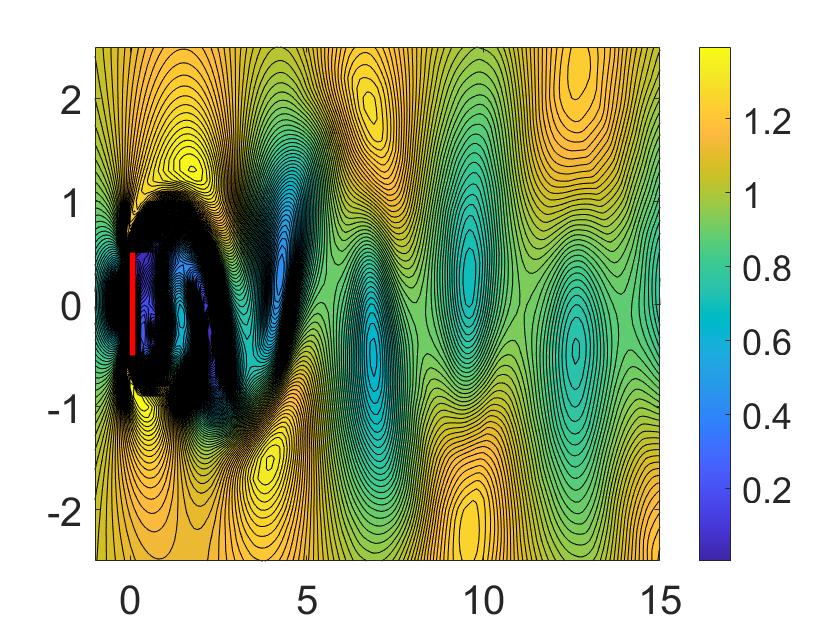}
\includegraphics[width = .3\textwidth, height=.15\textwidth]{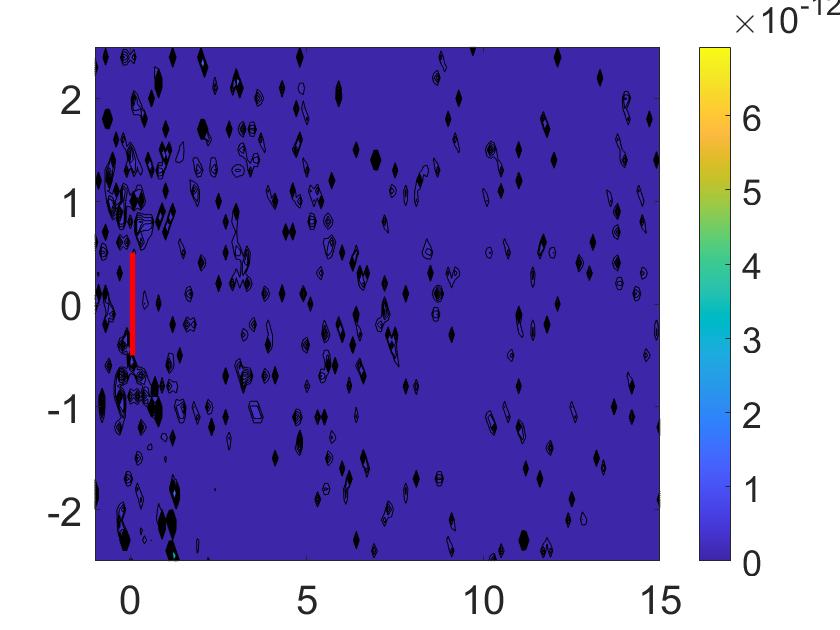}
\\
   	\includegraphics[width = .3\textwidth, height=.15\textwidth]{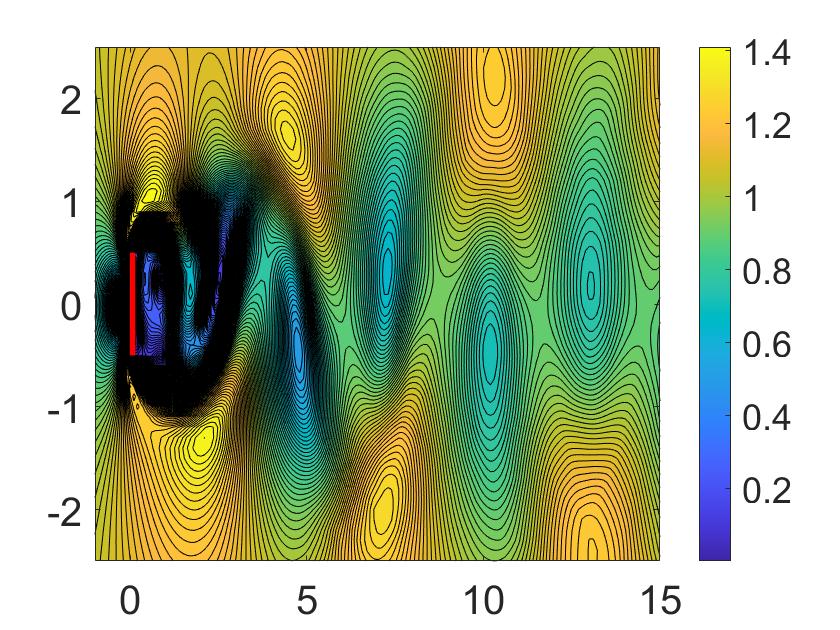}
   \includegraphics[width = .3\textwidth, height=.15\textwidth]{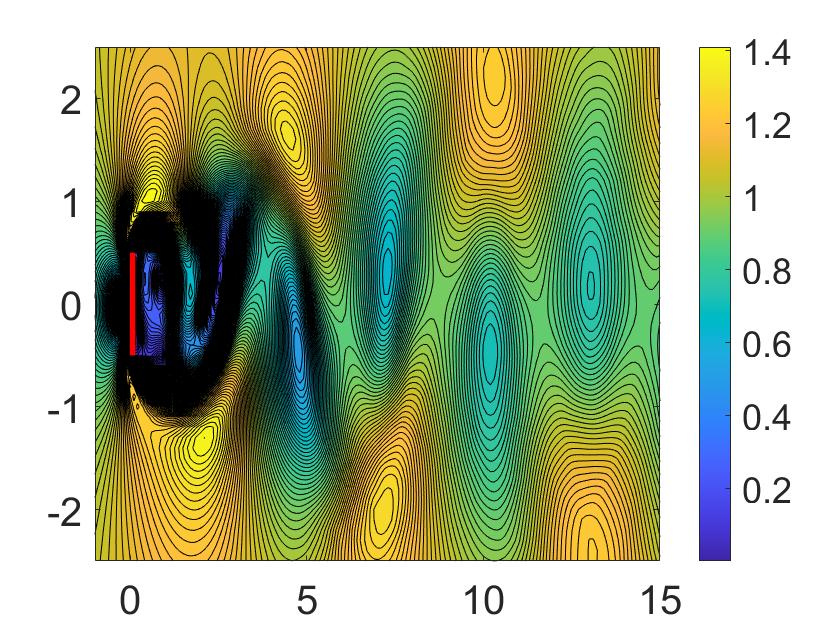}
   \includegraphics[width = .3\textwidth, height=.15\textwidth]{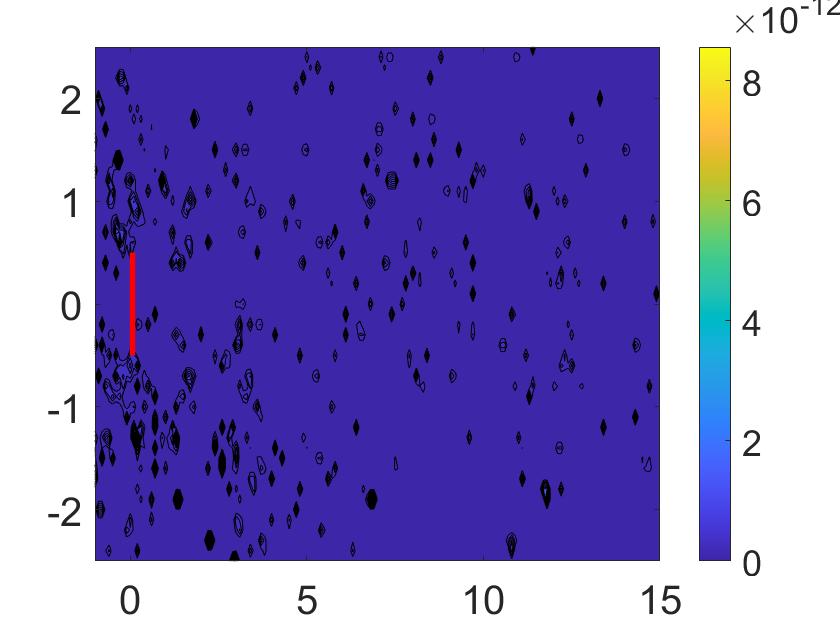}
	\\
	\caption{Contour plots of velocity for DNS (left), VV-DA with $\mu_1=\mu_2=10$ (center), and their difference (right), for times $t=0,\ 0.1,\ 1,\ 10,\ 20,\ 80$ (top to bottom).} \label{both_vel}
\end{figure}

\begin{figure}[H]
	\centering
	\includegraphics[width = .3\textwidth, height=.15\textwidth]{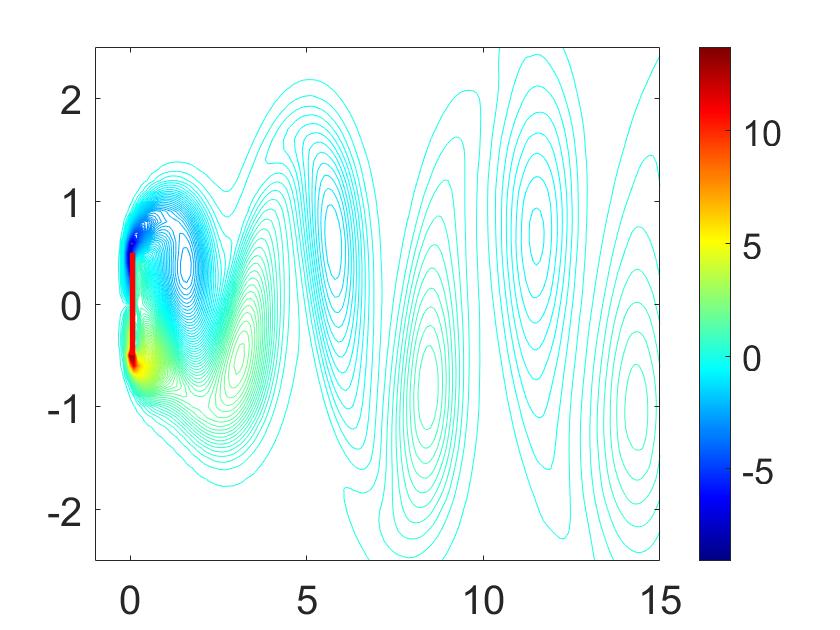}
	\includegraphics[width = .3\textwidth, height=.15\textwidth]{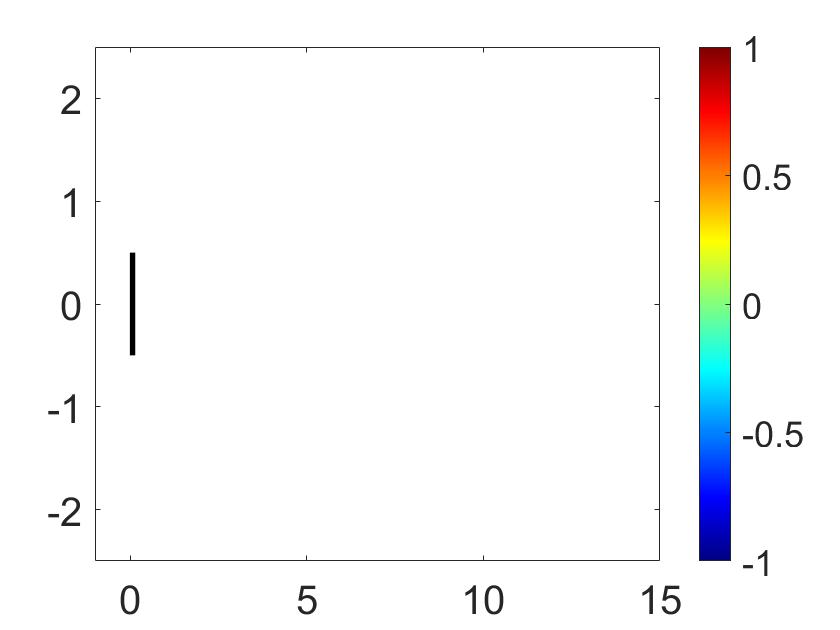}
			\quad\quad\quad\quad\quad\quad\quad\quad\quad\quad\quad\quad\quad\quad\quad
	\\
	\includegraphics[width = .3\textwidth, height=.15\textwidth]{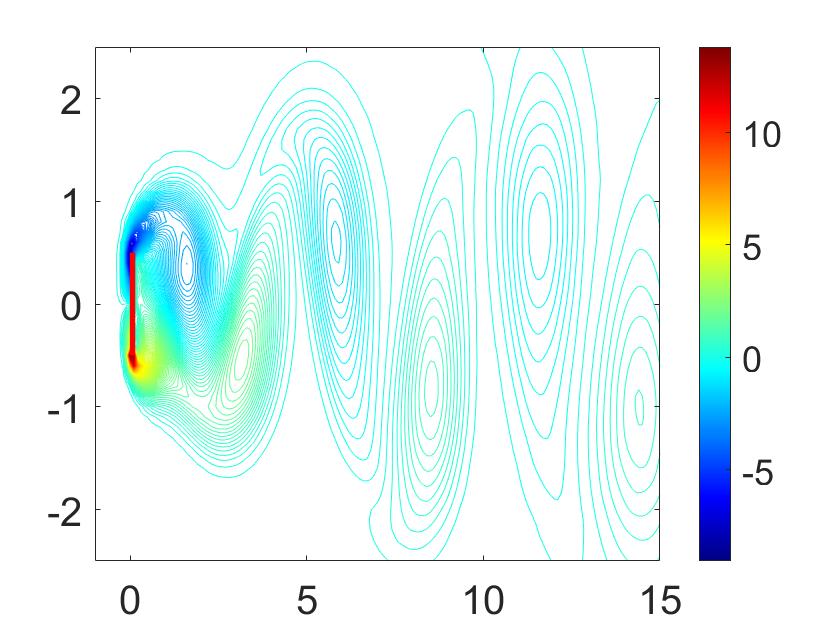}
	\includegraphics[width = .3\textwidth, height=.15\textwidth]{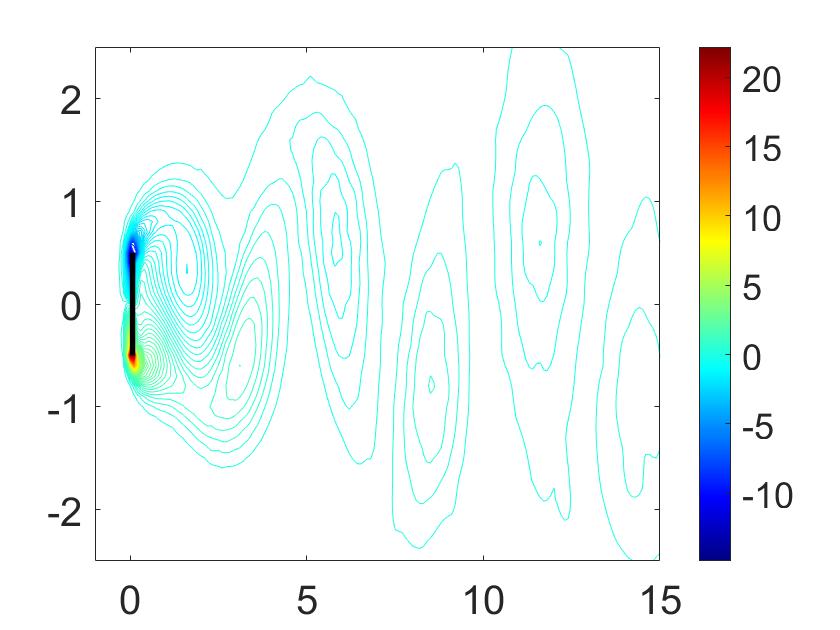}
	\includegraphics[width = .3\textwidth, height=.15\textwidth]{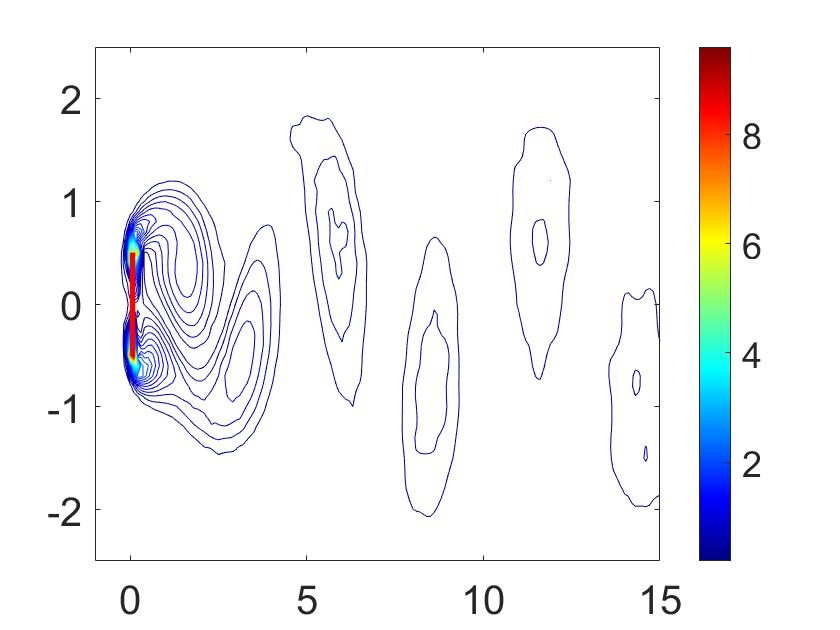}
	\\
	\includegraphics[width = .3\textwidth, height=.15\textwidth]{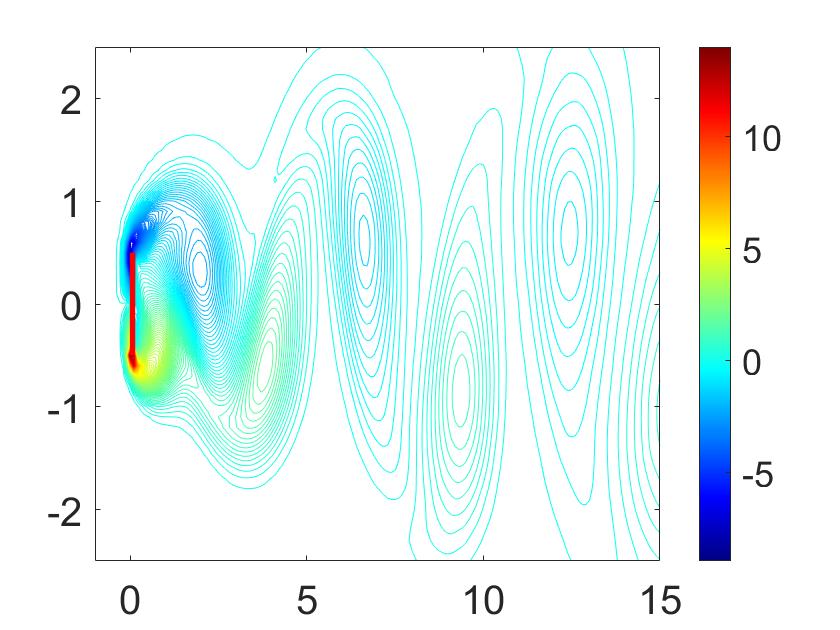}
	\includegraphics[width = .3\textwidth, height=.15\textwidth]{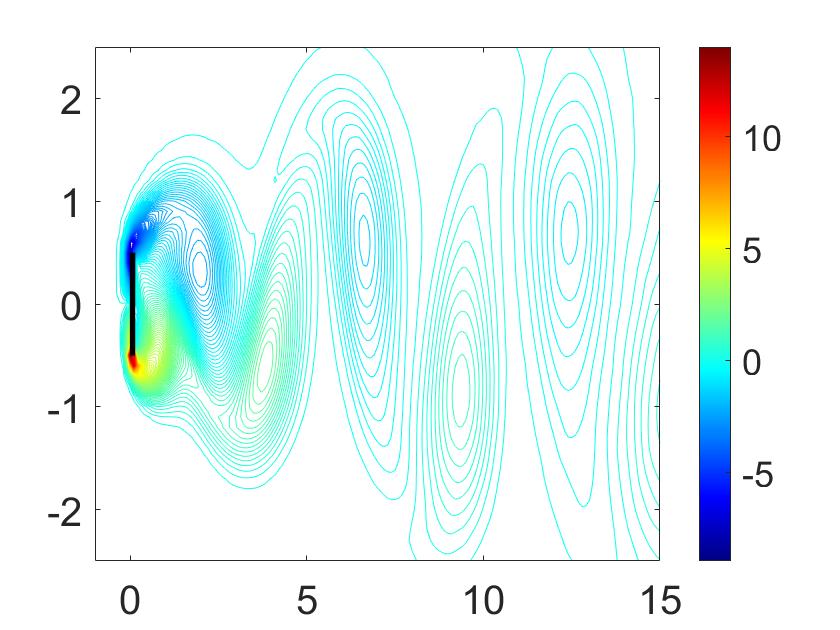}
	\includegraphics[width = .3\textwidth, height=.15\textwidth]{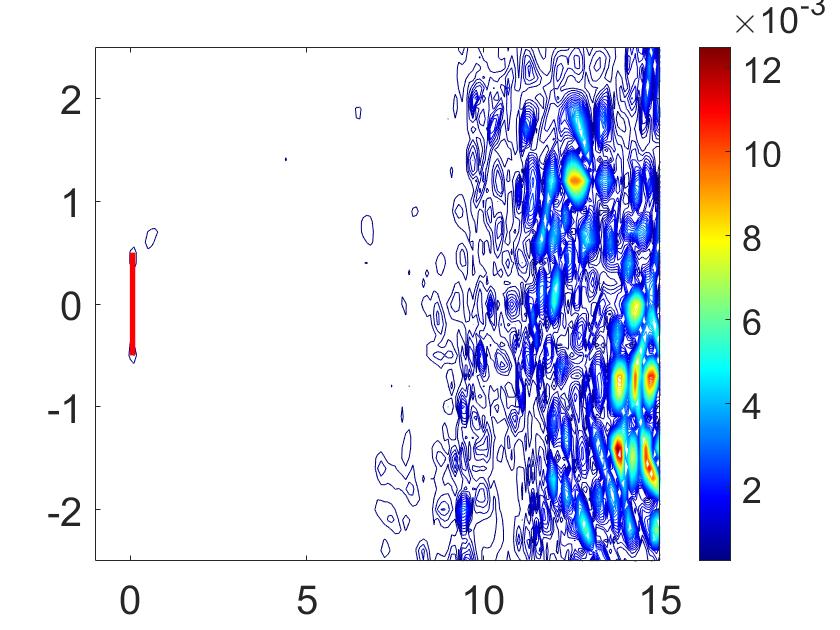}
	\\
	\includegraphics[width = .3\textwidth, height=.15\textwidth]{vort_DNS_just_vel_4050.jpg}
	\includegraphics[width = .3\textwidth, height=.15\textwidth]{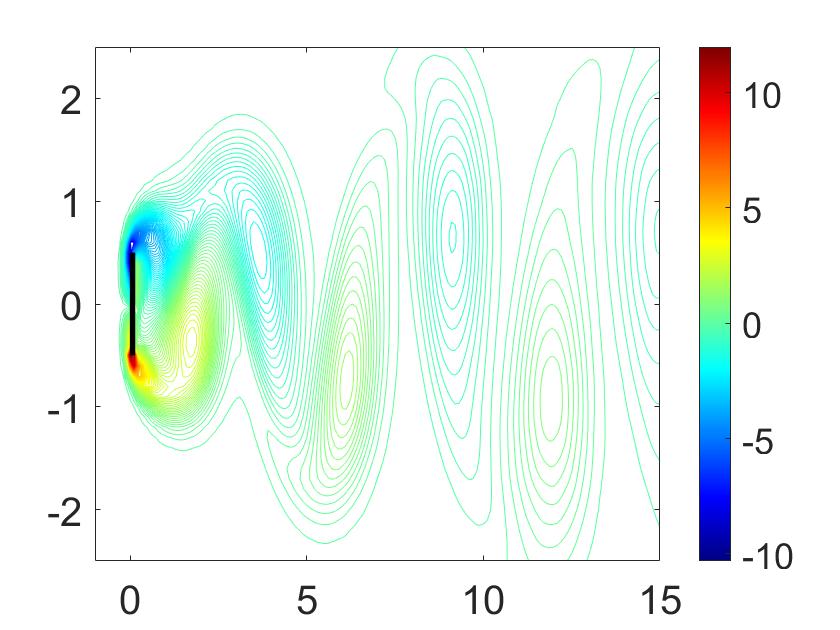}
	\includegraphics[width = .3\textwidth, height=.15\textwidth]{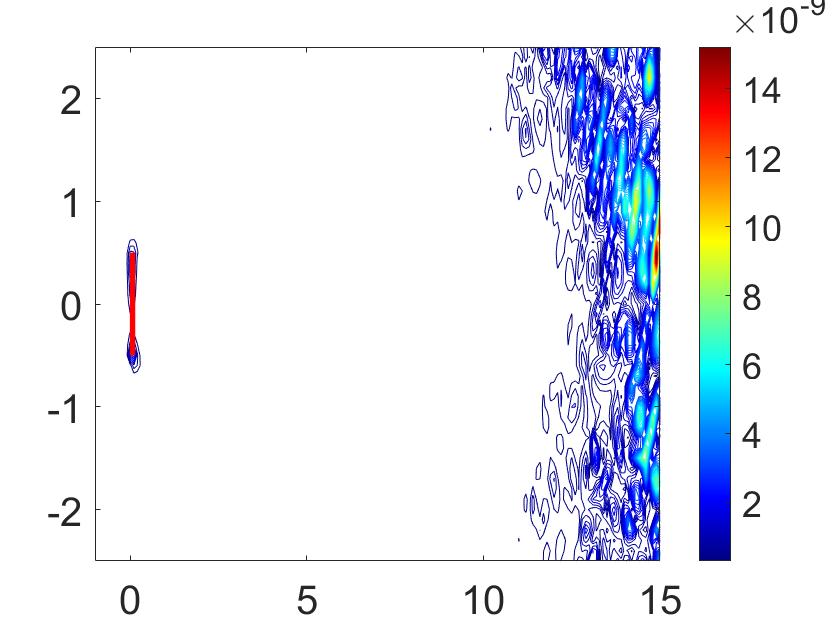}
	\\
	\includegraphics[width = .3\textwidth, height=.15\textwidth]{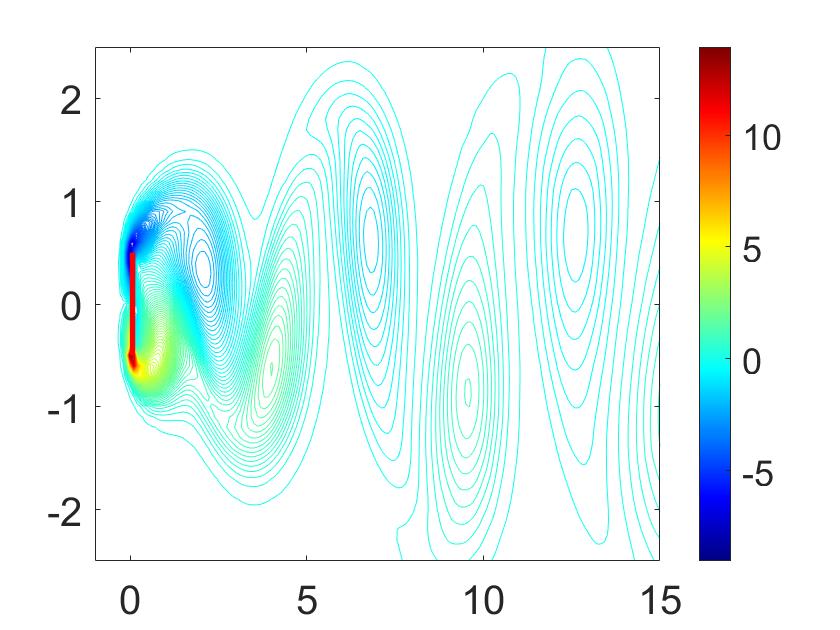}
	\includegraphics[width = .3\textwidth, height=.15\textwidth]{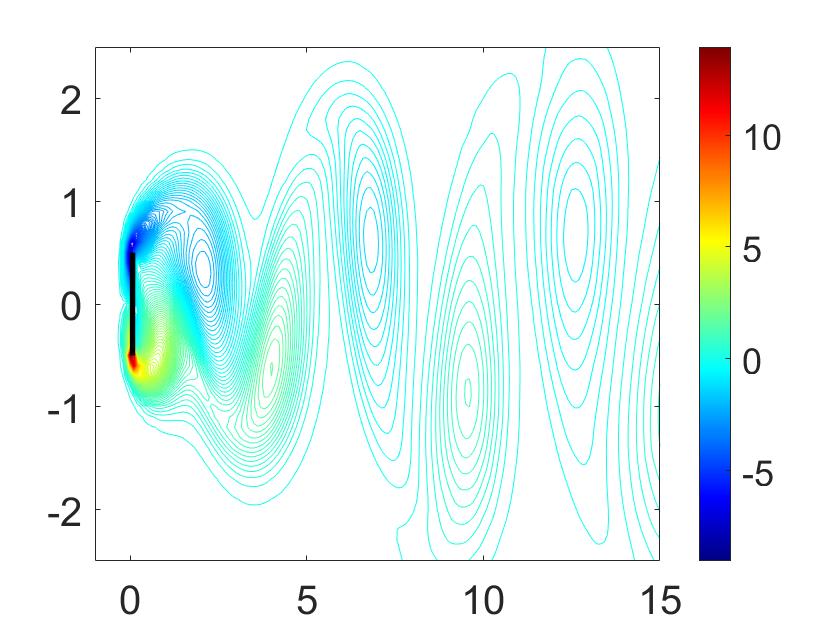}
	\includegraphics[width = .3\textwidth, height=.15\textwidth]{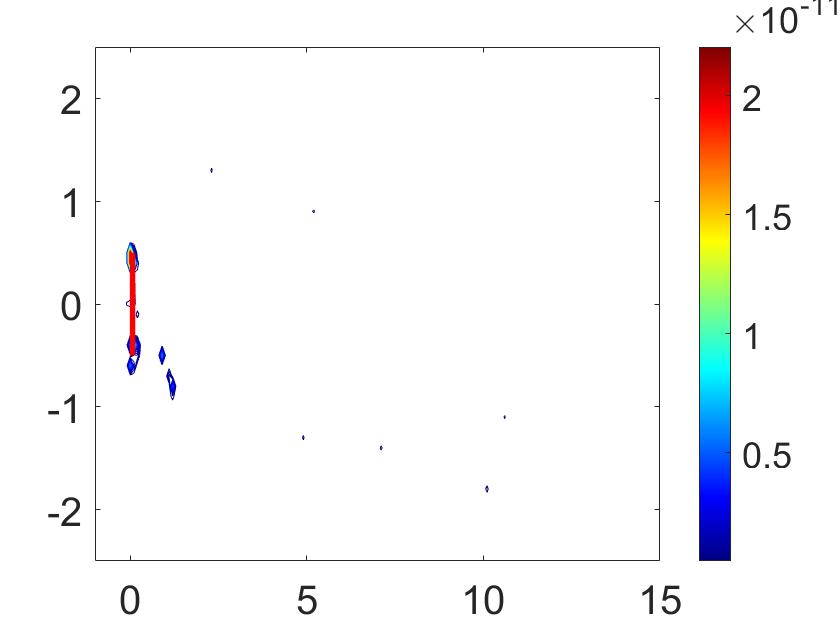}
	\\
	\includegraphics[width = .3\textwidth, height=.15\textwidth]{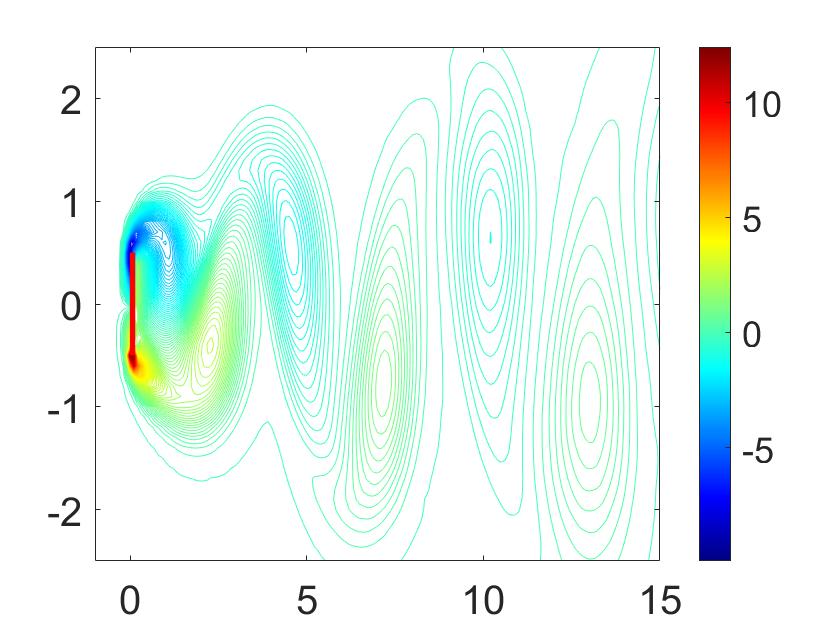}
	\includegraphics[width = .3\textwidth, height=.15\textwidth]{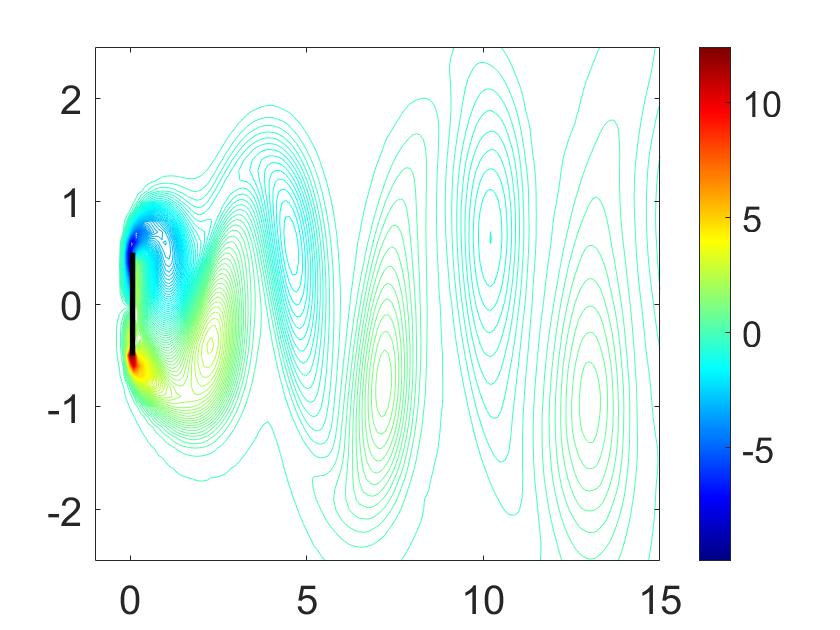}
	\includegraphics[width = .3\textwidth, height=.15\textwidth]{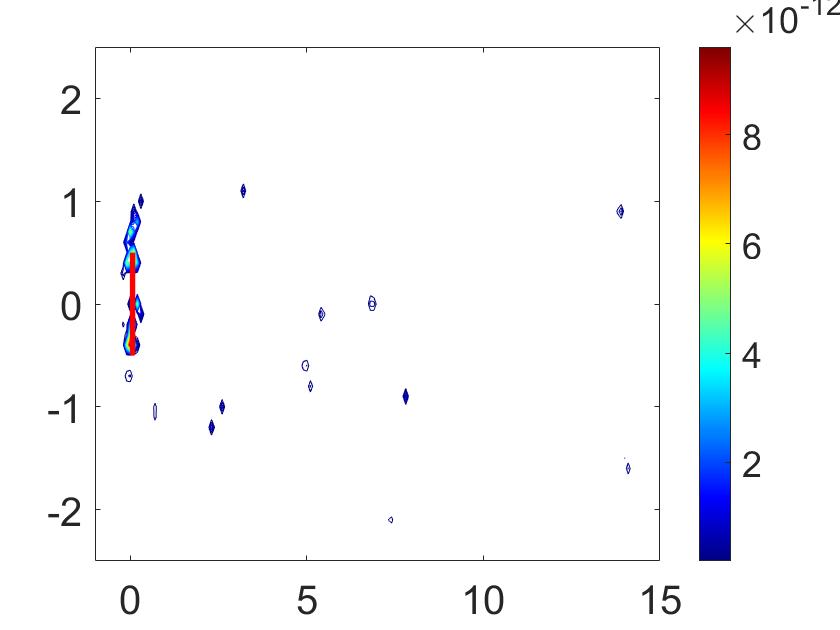}
	\\
	\caption{Contour plots of vorticity for DNS (left), VV-DA with $\mu_1=\mu_2=10$ (center), and their difference (right), for times $t=0,\ 0.1,\ 1,\ 10,\ 20,\ 80$ (top to bottom).} \label{both_vort}
\end{figure}

\begin{figure}[H]
	\centering
	\includegraphics[width = .3\textwidth, height=.15\textwidth]{vel_DNS_4000.jpg}
	\includegraphics[width = .3\textwidth, height=.15\textwidth]{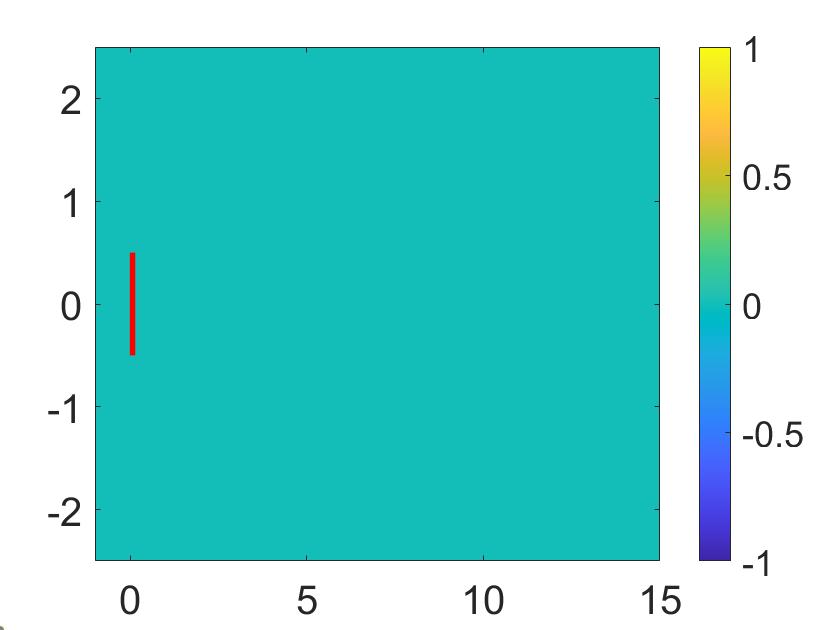}
			\quad\quad\quad\quad\quad\quad\quad\quad\quad\quad\quad\quad\quad\quad\quad
	\\
	\includegraphics[width = .3\textwidth, height=.15\textwidth]{vel_DNS_4005.jpg}
\includegraphics[width = .3\textwidth, height=.15\textwidth]{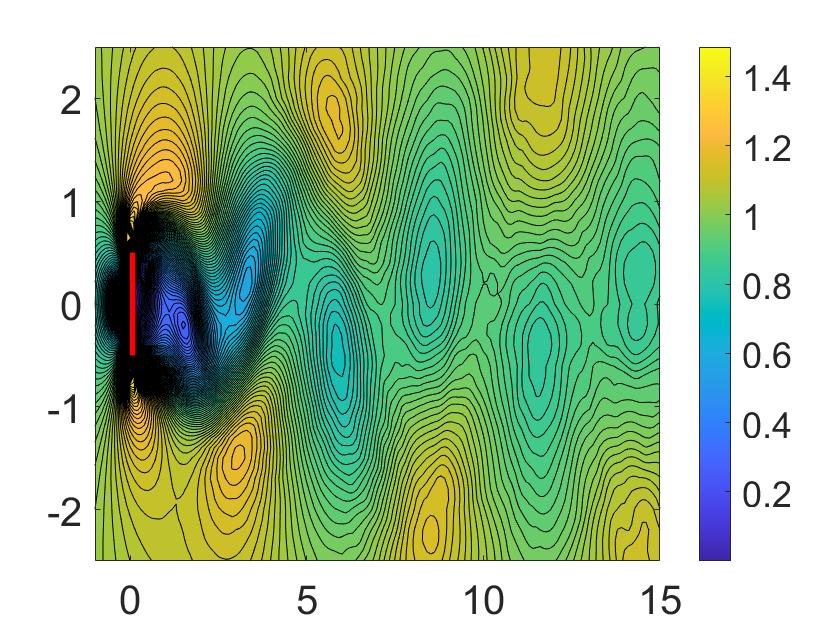}
\includegraphics[width = .3\textwidth, height=.15\textwidth]{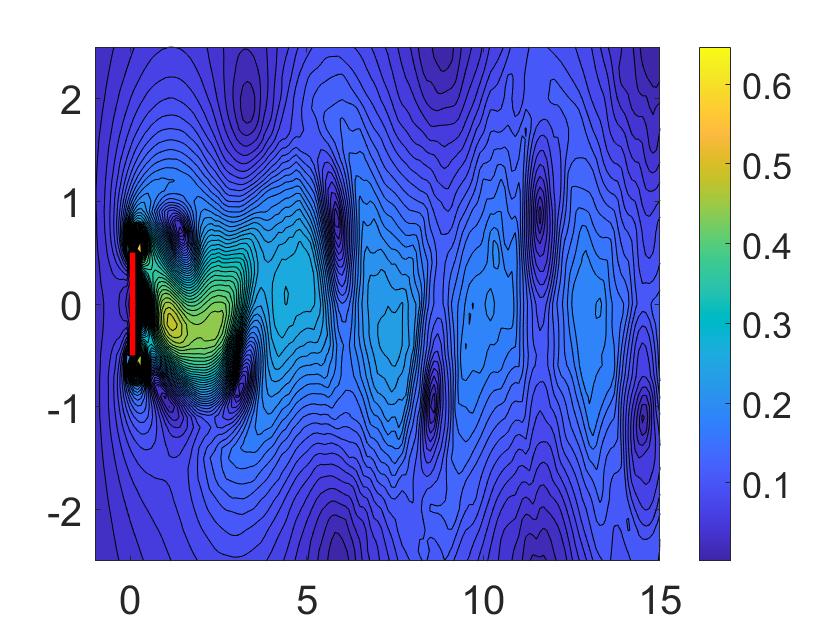}
	\\
	\includegraphics[width = .3\textwidth, height=.15\textwidth]{vel_DNS_4050.jpg}
\includegraphics[width = .3\textwidth, height=.15\textwidth]{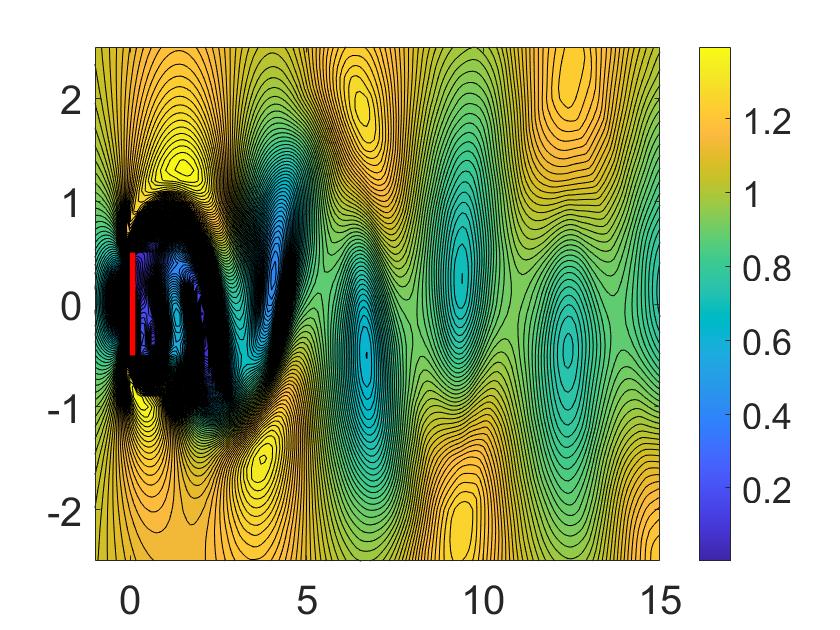}
\includegraphics[width = .3\textwidth, height=.15\textwidth]{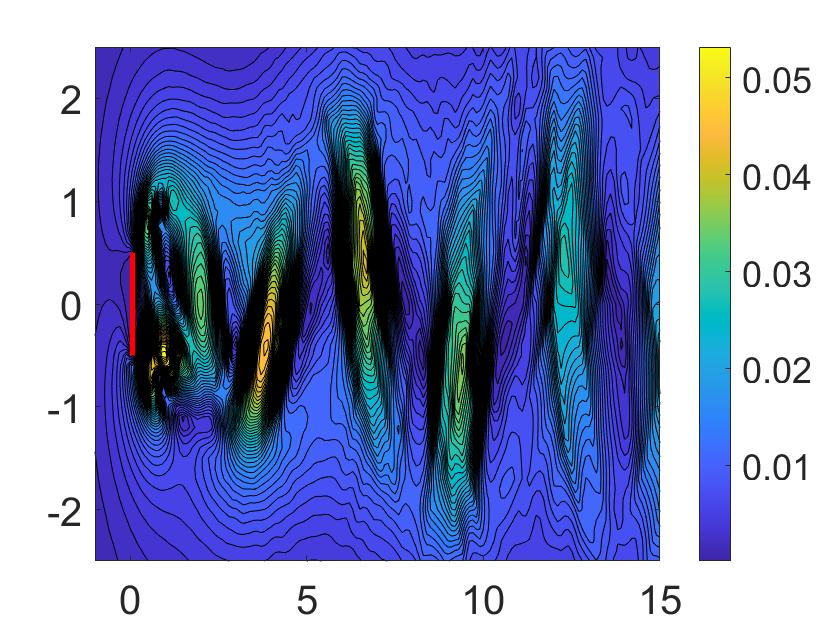}
	\\
	\includegraphics[width = .3\textwidth, height=.15\textwidth]{vel_DNS_4500.jpg}
\includegraphics[width = .3\textwidth, height=.15\textwidth]{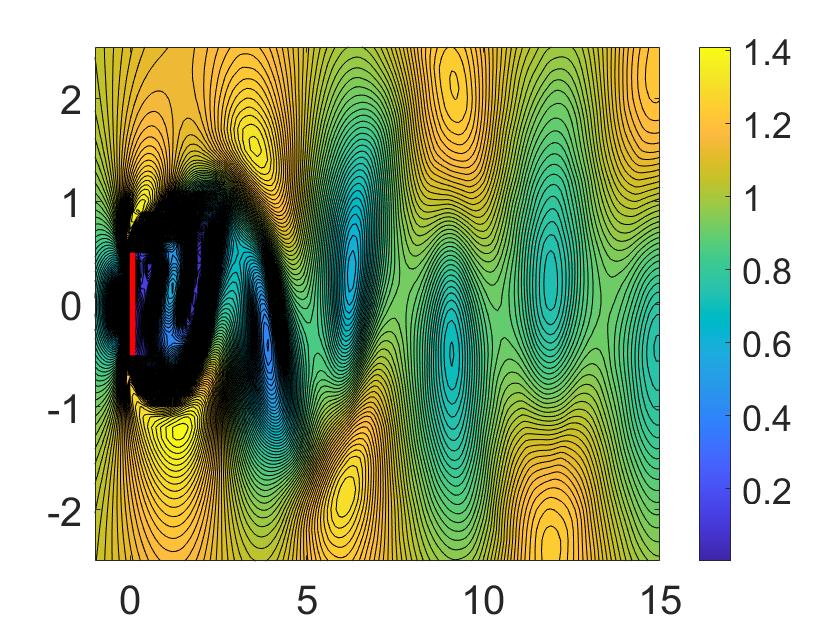}
\includegraphics[width = .3\textwidth, height=.15\textwidth]{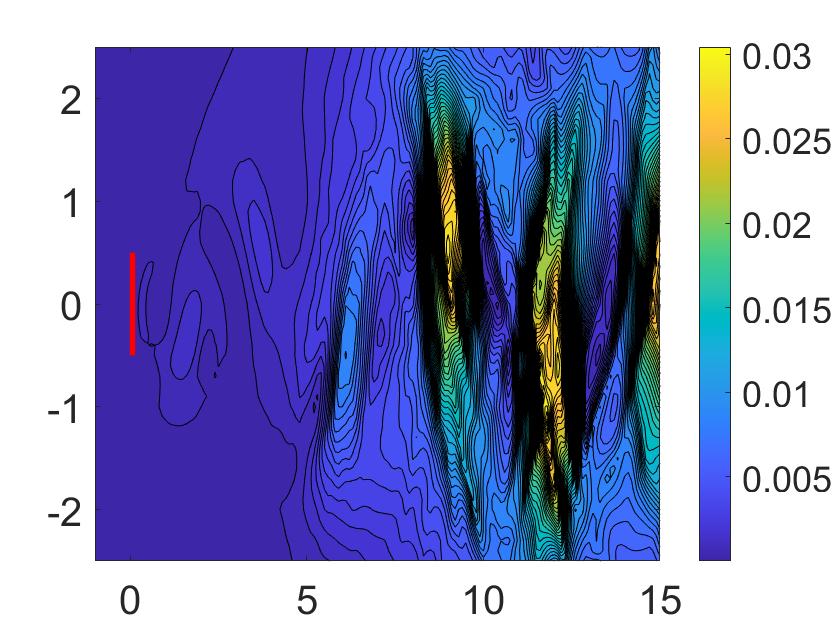}
	\\
	\includegraphics[width = .3\textwidth, height=.15\textwidth]{vel_DNS_5000.jpg}
\includegraphics[width = .3\textwidth, height=.15\textwidth]{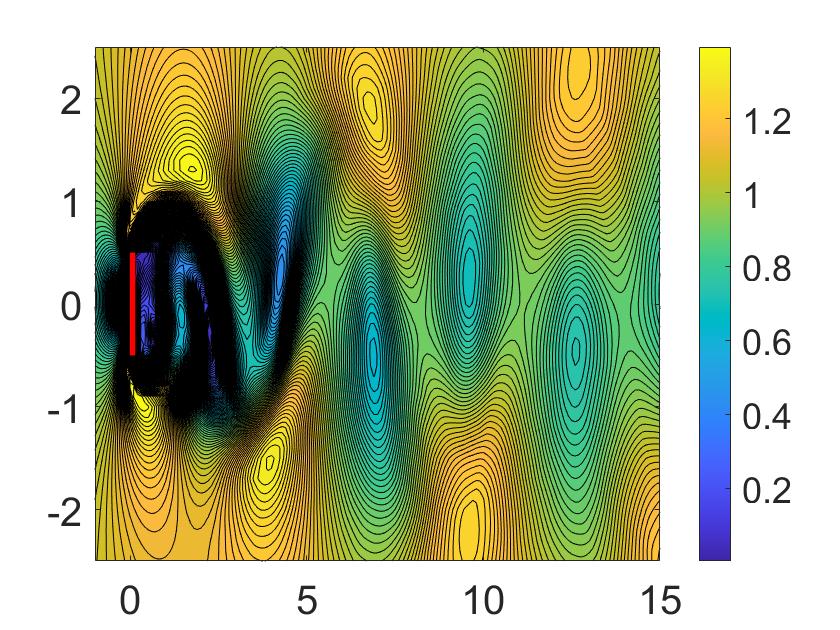}
\includegraphics[width = .3\textwidth, height=.15\textwidth]{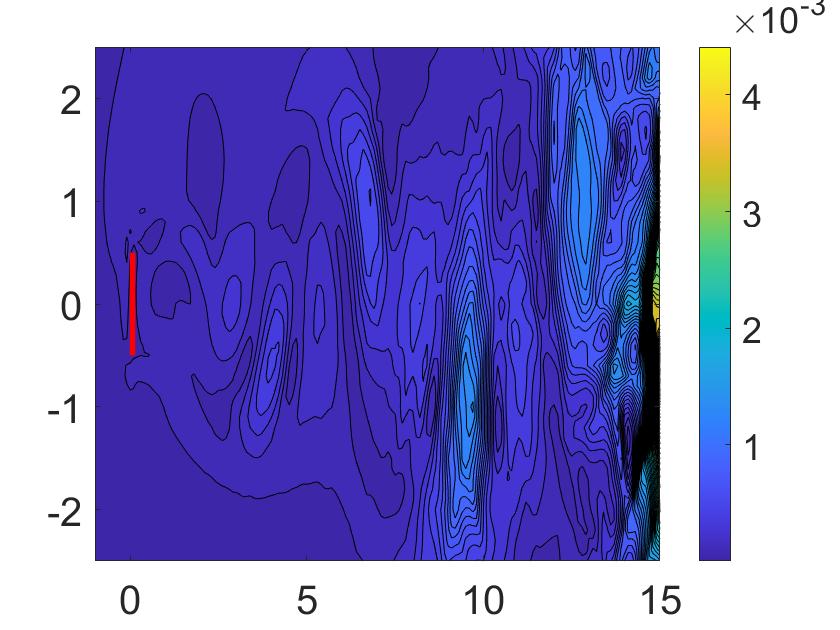}
	\\
	\includegraphics[width = .3\textwidth, height=.15\textwidth]{vel_DNS_8000.jpg}
\includegraphics[width = .3\textwidth, height=.15\textwidth]{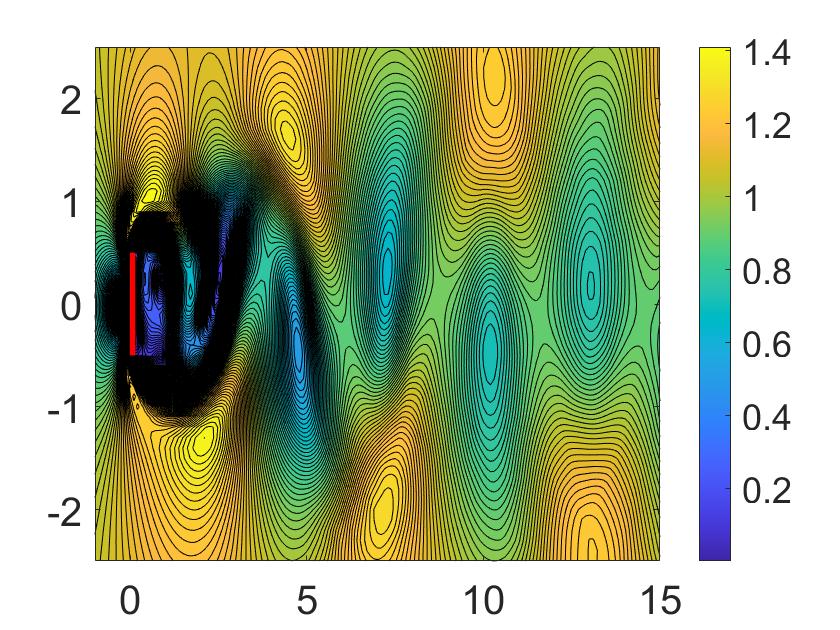}
\includegraphics[width = .3\textwidth, height=.15\textwidth]{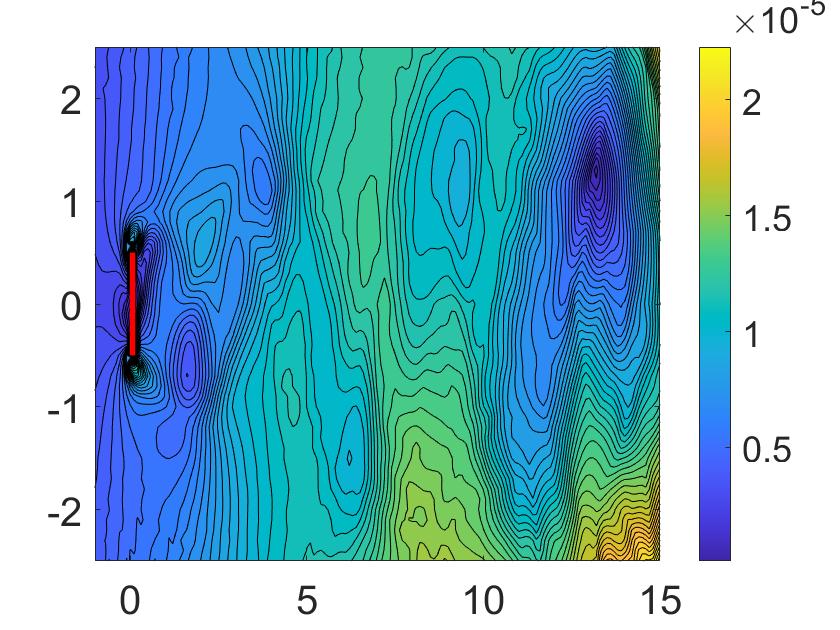}
	\\
	\caption{Contour plots of velocity for DNS (left), VV-DA with $\mu_1=10,\ \mu_2=0$ (center), and their difference (right), for times $t=0,\ 0.1,\ 1,\ 10,\ 20,\ 80$ (top to bottom).} \label{justvel_vel}
\end{figure}

\begin{figure}[H]
	\centering
	\includegraphics[width = .3\textwidth, height=.15\textwidth]{vort_DNS_just_vel_4000.jpg}
\includegraphics[width = .3\textwidth, height=.15\textwidth]{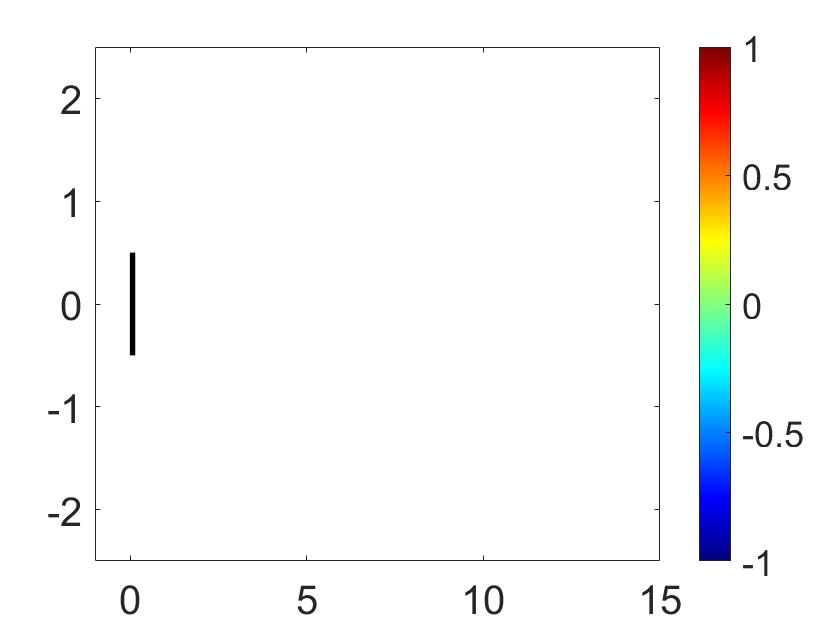}
			\quad\quad\quad\quad\quad\quad\quad\quad\quad\quad\quad\quad\quad\quad\quad
\\
\includegraphics[width = .3\textwidth, height=.15\textwidth]{vort_DNS_just_vel_4005.jpg}
\includegraphics[width = .3\textwidth, height=.15\textwidth]{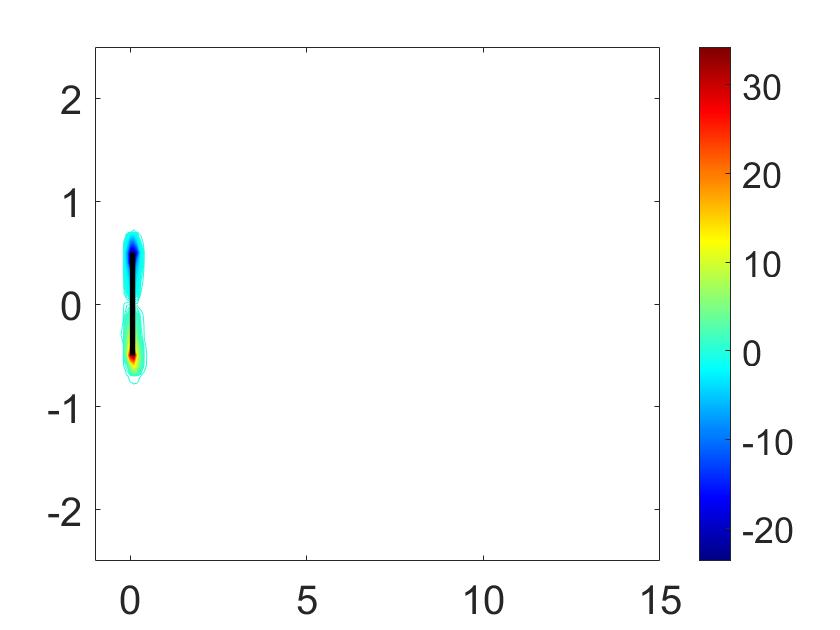}
\includegraphics[width = .3\textwidth, height=.15\textwidth]{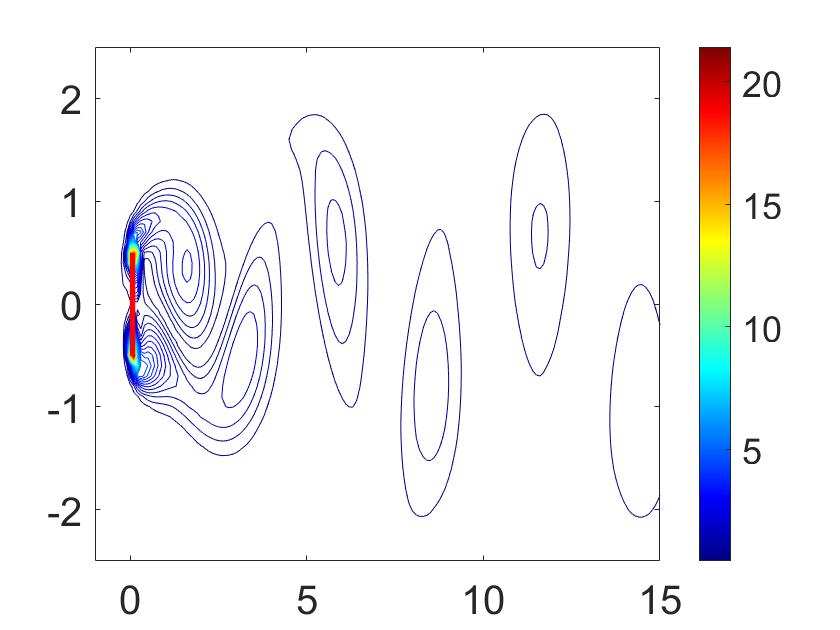}
\\
\includegraphics[width = .3\textwidth, height=.15\textwidth]{vort_DNS_just_vel_4050.jpg}
\includegraphics[width = .3\textwidth, height=.15\textwidth]{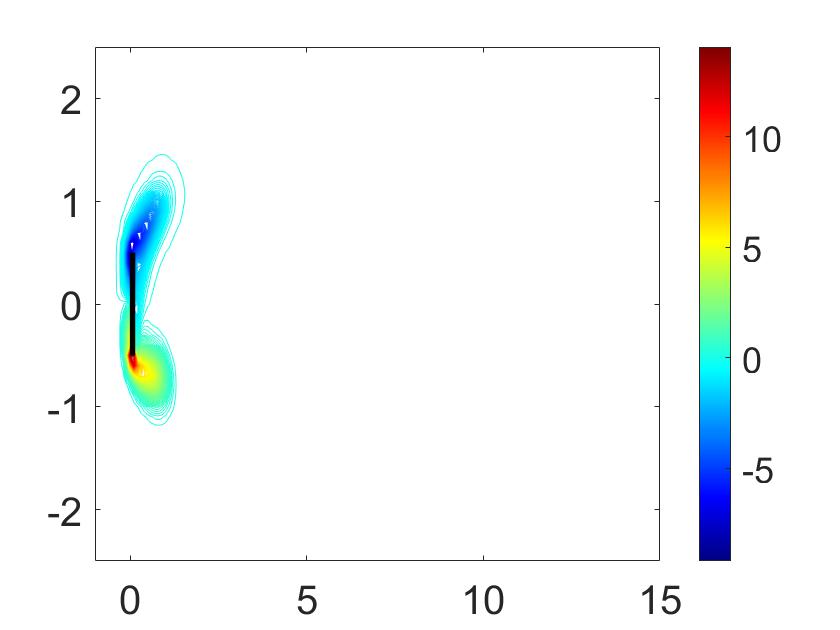}
\includegraphics[width = .3\textwidth, height=.15\textwidth]{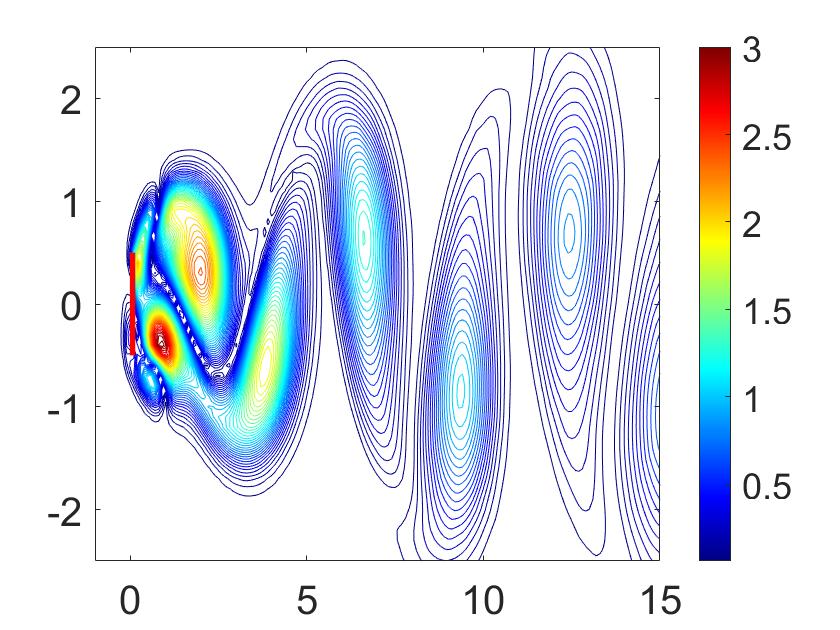}
\\
\includegraphics[width = .3\textwidth, height=.15\textwidth]{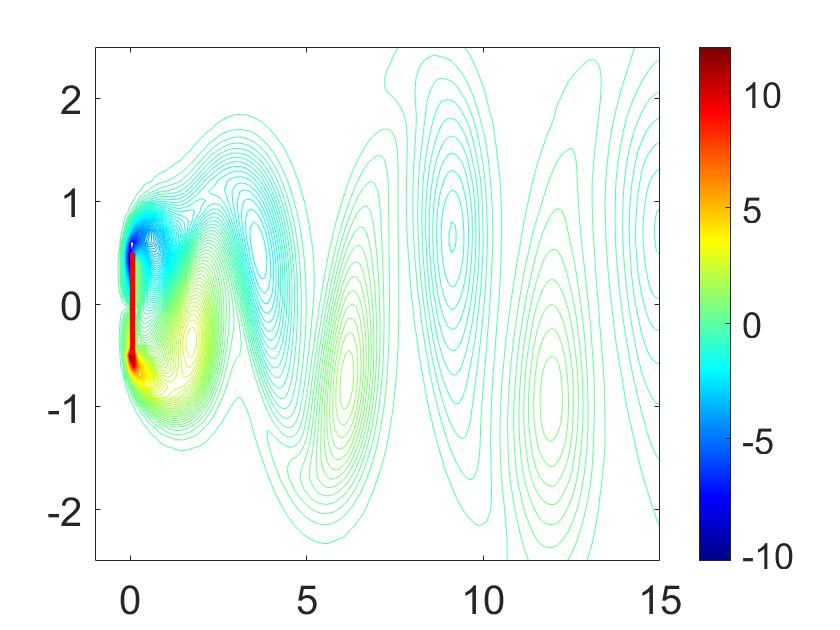}
\includegraphics[width = .3\textwidth, height=.15\textwidth]{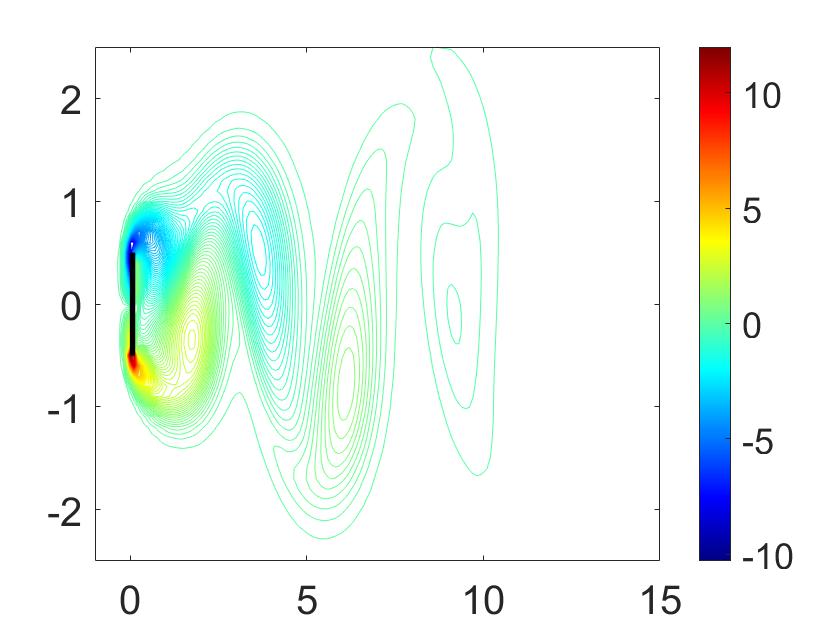}
\includegraphics[width = .3\textwidth, height=.15\textwidth]{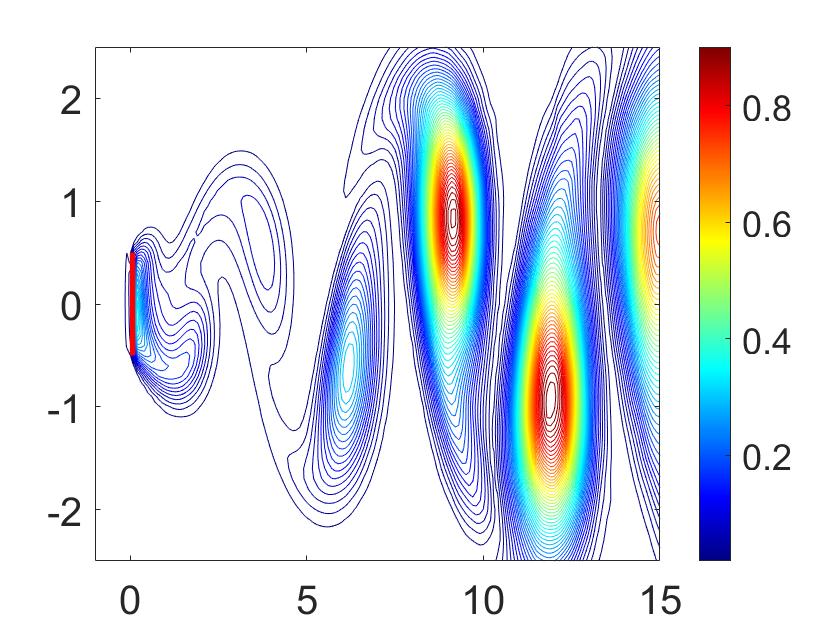}
\\
\includegraphics[width = .3\textwidth, height=.15\textwidth]{vort_DNS_just_vel_5000.jpg}
\includegraphics[width = .3\textwidth, height=.15\textwidth]{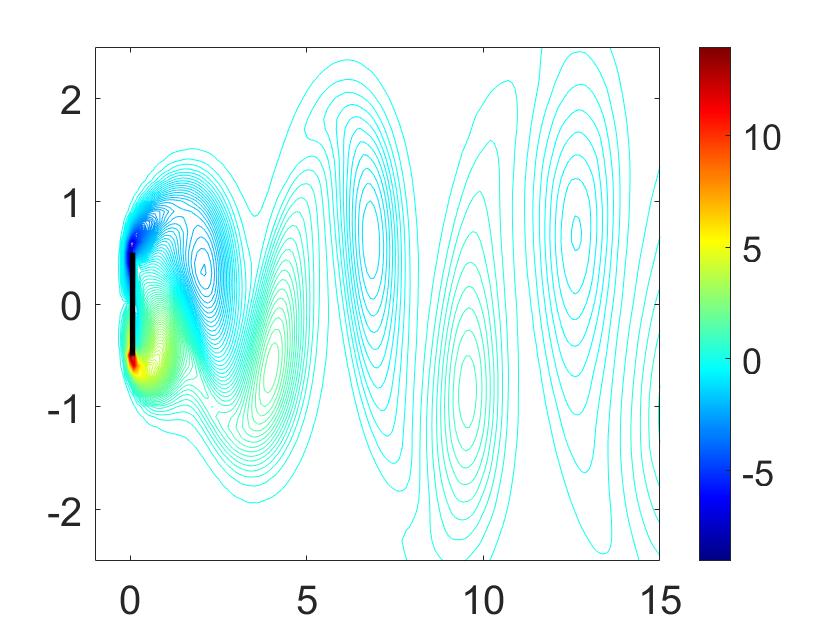}
\includegraphics[width = .3\textwidth, height=.15\textwidth]{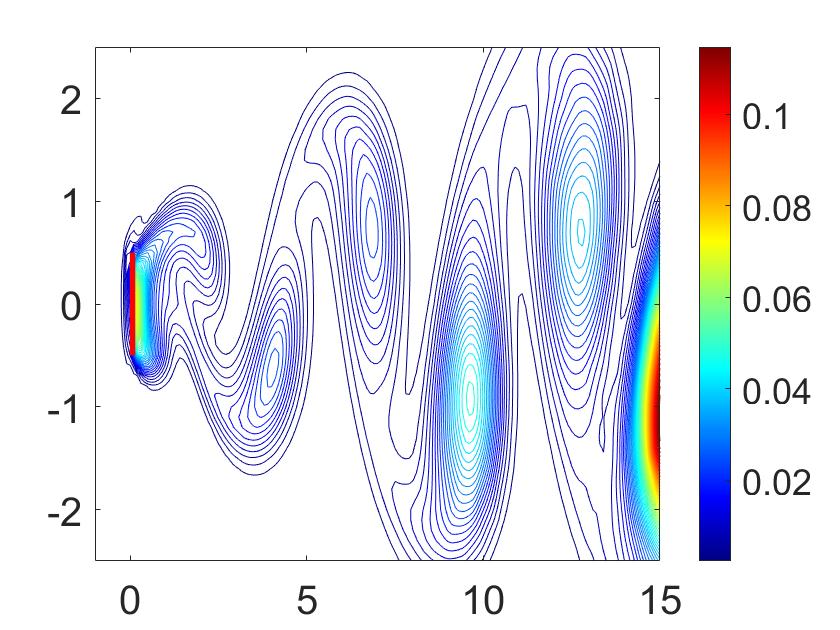}
\\
\includegraphics[width = .3\textwidth, height=.15\textwidth]{vort_DNS_just_vel_8000.jpg}
\includegraphics[width = .3\textwidth, height=.15\textwidth]{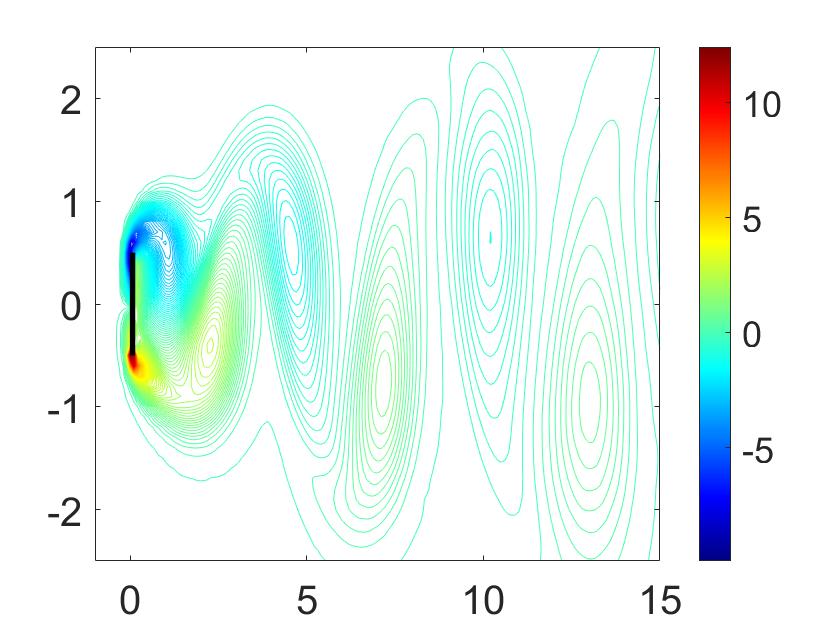}
\includegraphics[width = .3\textwidth, height=.15\textwidth]{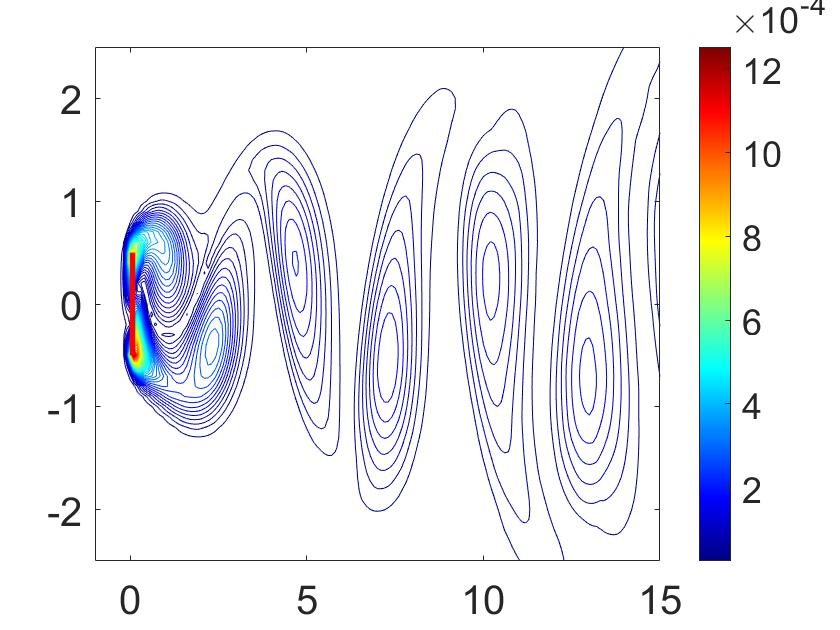}
\\
	\caption{Contour plots of vorticity for DNS (left), VV-DA with $\mu_1=10,\ \mu_2=0$ (center), and their difference (right), for times $t=0,\ 0.1,\ 1,\ 10,\ 20,\ 80$ (top to bottom).} \label{justvel_vort}
\end{figure}

\section{Conclusions}

We have analyzed a VV scheme for NSE enhanced with CDA, using linearized backward Euler or BDF2 in time and finite elements in space.  We proved that applying CDA preserves the unconditional stability properties of the scheme, and also yields optimal long-time accuracy if both velocity and vorticity are nudged, or velocity-only.  If only velocity is nudged, then the convergence in time to the true solution is slower, but still exponentially fast in time.   Numerical tests illustrate the theory, including the difference between nudging only velocity or also nudging vorticity.

For future directions, since nudging vorticity is difficult in practice due to accurate measurement data not typically being available, one may try to obtain better results for the velocity-only-nudging by penalizing the difference between $w_h$ and $\rot v_h$ in the vorticity equation.  That is, by setting $\mu_2=0$ and adding the term $\gamma (w - \rot u)$ to the vorticity equation  \eqref{VVda}, it may be possible to analytically prove a convergence result resembling Theorem \ref{nlthmL2}.  Determining whether this is possible, and if so then for what values of $\gamma$, and whether it works in practice (i.e. how large are associated constants), would need a detailed further study which the authors plan to undertake.

\bibliographystyle{abbrv}
\bibliography{graddiv}

\end{document}